\DeclareMathOperator*{\esssup}{ess\,sup}
\DeclareMathOperator*{\essinf}{ess\,inf}
\DeclareMathOperator*{\osc}{osc}
\numberwithin{equation}{section}
\newtheorem{theorem}{Theorem}[section]
\newtheorem{lemma}{Lemma}[section]
\newtheorem{remark}{Remark}[section]
\newtheorem{definition}{Definition}[section]
\def\XXint#1#2#3{{\setbox0=\hbox{$#1{#2#3}{\int}$}
     \vcenter{\hbox{$#2#3$}}\kern-.5\wd0}}
\begin{document}

\title{On the continuity of solutions of quasilinear parabolic equations with generalized
Orlicz growth under non-logarithmic conditions
}

\author{Igor I. Skrypnik, Mykhailo V. Voitovych
 }

  \maketitle

\begin{abstract}
We prove the continuity of bounded solutions for a wide class of parabolic
equations with $(p,q)$-growth
$$
u_{t}-{\rm div}\left(g(x,t,|\nabla u|)\,\frac{\nabla u}{|\nabla u|}\right)=0,
$$
under the generalized non-logarithmic Zhikov's condition
$$
g(x,t,{\rm v}/r)\leqslant c(K)\,g(y,\tau,{\rm v}/r),
\quad (x,t), (y,\tau)\in Q_{r,r}(x_{0},t_{0}),
\quad 0<{\rm v}\leqslant K\lambda(r),
$$
$$
\quad \lim\limits_{r\rightarrow0}\lambda(r)=0,
\quad \lim\limits_{r\rightarrow0} \frac{\lambda(r)}{r}=+\infty,
\quad \int_{0} \lambda(r)\,\frac{dr}{r}=+\infty.
$$
In particular, our results cover new cases of double-phase
parabolic equations.


\textbf{Keywords:}
quasilinear parabolic equations, generalized Orlicz growth,
non-logarithmic conditions, continuity of solutions.

\textbf{MSC (2020)}: 35B65, 35D30, 35K59, 35K92.

\end{abstract}

\pagestyle{myheadings} \thispagestyle{plain}
\markboth{Igor I. Skrypnik}
{On the continuity of solutions of quasilinear parabolic equations . . .}

\section{Introduction and main results}\label{Introduction}
In this paper we are concerned with a class of parabolic equations with
non-standard growth conditions.
Let $\Omega$ be a bounded domain in $\mathbb{R}^{n}$, $n\geqslant2$, $T>0$,
$\Omega_{T}:=\Omega\times(0,T)$.
We study solutions to the equation
\begin{equation}\label{eq1.1}
u_{t}-{\rm div} \mathbf{A}(x, t, \nabla u)=0, \quad (x,t)\in\Omega_{T}.
\end{equation}
Throughout the paper we suppose that the functions
$\mathbf{A}:\Omega_{T}\times\mathbb{R}^{n}\rightarrow\mathbb{R}^{n}$
are such that $\mathbf{A}(\cdot,\cdot,\xi)$ are Lebesgue measurable for all
$\xi\in \mathbb{R}^{n}$, and $\mathbf{A}(x,t,\cdot)$ are continuous for almost all
$(x,t)\in\Omega_{T}$.
We assume also that the following structure conditions are satisfied
\begin{equation}\label{eq1.2}
\begin{aligned}
\mathbf{A}(x,t,\xi)\,\xi&\geqslant K_{1}\,g(x,t,|\xi|)\,|\xi|,
\\
|\mathbf{A}(x,t,\xi)|&\leqslant K_{2}\,g(x,t,|\xi|),
\end{aligned}
\end{equation}
where $K_{1}$, $K_{2}$ are positive constants, and
$g(x,t,{\rm v})$, ${\rm v}>0$,
is a positive function, satisfying conditions which will be specified below.

The aim of this paper is to establish the continuity of bounded weak solutions
for a wide class of parabolic equations with generalized Orlicz growth. In terms
of the function $g$, this class can be characterized as follows. Let
$g(x,t, {\rm v}):\Omega_{T}\times\mathbb{R}_{+}\rightarrow\mathbb{R}_{+}$
be a non-negative function satisfying the following properties: for a.a. $(x,t)\in\Omega_{T}$,
the function ${\rm v}\rightarrow g(x,t,{\rm v})$ is increasing, and
$\lim\limits_{{\rm v}\rightarrow0}g(x,t,{\rm v})=0$,
$\lim\limits_{{\rm v}\rightarrow+\infty}g(x,t,{\rm v})=+\infty$.

In addition, we assume that
 \begin{itemize}
\item[(${\rm g}_{1}$)]
there exist $1<p<q$ such that
for $(x,t)\in \Omega_{T}$ and for ${\rm w}\geqslant{\rm v}> 0$ there hold
\begin{equation*}\label{gqineq}
\left( \frac{{\rm w}}{{\rm v}} \right)^{p-1}
\leqslant\frac{g(x,t, {\rm w})}{g(x,t, {\rm v})}\leqslant
\left( \frac{{\rm w}}{{\rm v}} \right)^{q-1};
\end{equation*}
\end{itemize}
 \begin{itemize}
\item[(${\rm g}_{2}$)]
for any $K, K_{3}>0$ and any cylinder $Q_{R_{0}, R_{0}}(x_{0},t_{0})\subset\Omega_{T}$,
there exist
$c_{1}(K,K_{3})>0$ and positive continuous and non-decreasing function $\lambda(r)$ on the interval
$(0,R_{0})$, $\lim\limits_{r\rightarrow0}\lambda(r)=0$,
$\lim\limits_{r\rightarrow0} r^{1-\delta_{0}}/\lambda(r)=0$ and
$\lambda(2r)\leqslant (3/2)^{1-\delta_{0}}\lambda(r)$, $0<r<R/2$,
with some $\delta_{0}\in(0,1)$, such that
$$
g(x,t, {\rm v}/r)\leqslant c_{1}(K,K_{3})\, g(y,\tau, {\rm v}/r)
$$
for any $(x,t)$, $(y,\tau)\in Q_{r,K_{3}r}(x_{0},t_{0})\subset Q_{R_{0}, R_{0}}(x_{0},t_{0})$
and for all $0<{\rm v}\leqslant K\lambda(r)$.
Here
$Q_{R_{1}, R_{2}}(x_{0},t_{0}):=B_{R_{1}}(x_{0})\times (t_{0}-R_{2},t_{0})$,
$B_{R_{1}}(x_{0}):=\{x\in \mathbb{R}^{n}: |x-x_{0}|<R_{1}\}$, $R_{1},R_{2}>0$.
\end{itemize}
\begin{remark}\label{rem1.1}
{\rm The function $g_{a(x,t)}({\rm v}):={\rm v}^{\,p-1}+a(x,t){\rm v}^{\,q-1}$,
$a(x,t)\geqslant0$, ${\rm v}>0$, where
$$
\osc\limits_{Q_{r,r}(x_{0},t_{0})}a(x,t)\leqslant A\,r^{q-p}\mu^{q-p}(r),
\ \ A>0, \ \  q>p,
$$
\begin{equation*}\label{1.3}
\lim\limits_{r\rightarrow0}\mu(r)=+\infty, \quad
\lim\limits_{r\rightarrow0}r\mu(r)=0,
\end{equation*}
satisfies condition (${\rm g}_{2}$) with $\lambda(r)=1/\mu(r)$.
Indeed, if $0<{\rm v}\leqslant K\lambda(r)$ then
\begin{multline*}
g_{a(x,t)}({\rm v}/r)-g_{a(y,\tau)}({\rm v}/r)\leqslant
|a(x,t)-a(y,\tau)|\,({\rm v}/r)^{q-1}
\\
\leqslant A\,[K\mu(r)\lambda(r)]^{q-p}\,({\rm v}/r)^{p-1}
\leqslant AK^{q-p}\,({\rm v}/r)^{p-1}\leqslant AK^{q-p}\,g_{a(y,\tau)}({\rm v}/r).
\end{multline*}
Similarly, the function
$\widetilde{g}_{a(x,t)}({\rm v}):={\rm v}^{\,p-1}+a(x,t){\rm v}^{\,\widetilde{q}-1}
[1+\ln(1+{\rm v})]^{\beta}$, $a(x,t)\geqslant0$, $\beta\geqslant0$, ${\rm v}>0$, where
$$
\osc\limits_{Q_{r,r}(x_{0},t_{0})}a(x,t)\leqslant A\,r^{\widetilde{q}-p}\mu^{\widetilde{q}-p}(r),
\ \ A>0, \ \  \widetilde{q}>p, \ \ q=\widetilde{q}+\beta,
$$
$$
\lim\limits_{r\rightarrow0}\mu(r)=+\infty, \quad
\lim\limits_{r\rightarrow0}r\mu(r)\ln^{\frac{\beta}{\widetilde{q}-p}}\frac{1}{r}=0,
$$
satisfies condition (${\rm g}_{2}$) with
$\lambda(r)= \dfrac{\ln^{-\frac{\beta}{\widetilde{q}-p}}\dfrac{1}{r}}{\mu(r)}$.

}
\end{remark}
\begin{remark}\label{rem1.2}
{\rm The function
$g_{b(x,t)}({\rm v}):={\rm v}^{\,p-1}\Big[1+\ln\big( 1+b(x,t){\rm v} \big) \Big]$,
$b(x,t)\geqslant0$, ${\rm v}>0$, where
$\osc\limits_{Q_{r,r}(x_{0},t_{0})}b(x,t)\leqslant B\,r\mu(r)$, $B>0$,
$\lim\limits_{r\rightarrow0}\mu(r)=+\infty$,
$\lim\limits_{r\rightarrow0}r\mu(r)=0$,
satisfies condition (${\rm g}_{2}$) with $\lambda(r)=1/\mu(r)$. Indeed
$$
\begin{aligned}
g_{b(x,t)}({\rm v}/r)&-g_{b(y,\tau)}({\rm v}/r)=
\left( \frac{{\rm v}}{r} \right)^{p-1}
\ln\frac{1+b(x,t)\dfrac{{\rm v}}{r}}{1+b(y,\tau)\dfrac{{\rm v}}{r}}
\\
&\leqslant \left( \frac{{\rm v}}{r} \right)^{p-1}
\ln \left( 1+ |b(x,t)-b(y,\tau)|\, \frac{{\rm v}}{r} \right)
\leqslant
\left( \frac{{\rm v}}{r} \right)^{p-1}\ln\big(1+BK\mu(r)\lambda(r)\big)
\\
&=\left( \frac{{\rm v}}{r} \right)^{p-1}\ln(1+BK)\leqslant
\ln(1+BK)\, g_{b(y,\tau)}({\rm v}/r) \ \ \text{if} \ \ 0<{\rm v}\leqslant K\lambda(r).
\end{aligned}
$$
}
\end{remark}
\begin{remark}\label{rem1.3}
{\rm Consider the functions
$$
g_{1}(x,t,{\rm v}):={\rm v}^{\,p(x,t)-1}, \ \
g_{2}(x,t,{\rm v}):={\rm v}^{\,p-1}\big( 1+C(x,t)\ln(1+{\rm v}) \big),
\ \ (x,t)\in \Omega_{T}, \ \ {\rm v}>0,
$$
where
$$
|p(x,t)-p(y,\tau)|+|C(x,t)-C(y,\tau)|
\leqslant
\frac{L}{\ln \dfrac{1}{r\mu(r)}},
\quad
(x,t), (y,\tau) \in Q_{r,r}(x_{0},t_{0}),
$$
$$
0<L<+\infty, \quad \lim\limits_{r\rightarrow0}\mu(r)=+\infty,
\quad \lim\limits_{r\rightarrow0}r^{1-\delta_{0}}\mu(r)=0, \quad \delta_{0}\in (0,1).
$$
It is obvious that the functions $g_{1}$, $g_{2}$ satisfy condition
(${\rm g}_{2}$) with $\lambda(r)=1/\mu(r)$.
But by our choices the following inequalities hold:
$\delta_{0}\ln\dfrac{1}{r}\leqslant \ln\dfrac{1}{r\mu(r)}\leqslant\ln\dfrac{1}{r}$.
So, in this case condition (${\rm g}_{2}$) is equivalent to the logarithmic Zhikov's condition.
In this case, the qualitative properties of solutions to parabolic equations are known
(see, e.g., \cite{AntZhikov2005, BarBog2014, BogDuzaar2012, DingZhangZhou2020, WinkZach2016, XuChen2006, Yao2014, Yao2015,
ZhZhouXueNonAn2014, ZhikPast2010MatNotes, ZhikAlkhTSP11}
and references therein).
}
\end{remark}

The study of regularity of minima
of functionals with non-standard growth has been initiated by Zhikov
\cite{ZhikIzv1983, ZhikIzv1986, ZhikJMathPh94, ZhikJMathPh9798, ZhikKozlOlein94},
Marcellini \cite{Marcellini1989, Marcellini1991}, and Lieberman \cite{Lieberman91},
and in the last thirty years, the qualitative theory
of second order elliptic and parabolic equations with so-called log-condition
(i.e. if $\lambda(r)\equiv1$) has been actively developed
(see, e.g.,
\cite{DienHarHastRuzVarEpn, HarHastOrlicz, HarHastLeNuorNA2010, MingioneDarkSide,
SkrVoitUMB19, SkrVoitNA20, VoitNA19} for the overviews).
Equations of this type and systems of such equations arise in various
problems of mathematical physics (see the monographs
\cite{AntDiazShm2002monogr, HarHastOrlicz, Ruzicka2000, Weickert} for the application background).

Double-phase elliptic equations under the logarithmic condition were studied by
Colombo, Mingione \cite{ColMing218, ColMing15, ColMingJFnctAn16}
and by Baroni, Colombo, Mingione \cite{BarColMing, BarColMingStPt16, BarColMingCalc.Var.18}.
Particularly,
$C^{0,\beta}_{{\rm loc}}(\Omega)$, $C^{1,\beta}_{{\rm loc}}(\Omega)$ and Harnack's
inequality were obtained in the case $\lambda(r)\equiv1$ under the precise
conditions on the parameters $\alpha$, $p$, $q$.
The continuity of solutions and Harnack's inequality for double-phase parabolic equations
were proved in \cite{BurchSkrPotAn} under the logarithmic condition (${\rm g}_{2}$)
(i.e. if $\lambda(r)=1$).

The case when condition (${\rm g}_{2}$) holds differs substantionally from the
logarithmic case. To our knowledge there are few results in this direction.
Zhikov \cite{ZhikPOMI04}
obtained a generalization of the logarithmic condition which guarantees the density
of smooth functions in Sobolev space $W^{1,p(x)}(\Omega)$. Particularly,
this result holds if $1<p\leqslant p(x)$ and
$$
|p(x)-p(y)|\leqslant L\,
\frac{ \Big|\ln \big|\ln |x-y|\big| \Big|}{\big|\ln |x-y|\big|},
\quad x,y\in\Omega, \quad x\neq y, \quad 0<L<p/n.
$$
Later Zhikov and Pastukhova \cite{ZhikPast2008MatSb}
under the same condition proved higher integrability
of the gradient of solutions to the $p(x)$-Laplace equation.

Interior continuity, continuity up to the boundary and Harnack's inequality
to $p(x)$-Laplace equation were proved by Alkhutov, Krasheninnikova \cite{AlhutovKrash08},
Alkhutov, Surnachev \cite{AlkhSurnAlgAn19} and Surnachev \cite{SurnPrepr2018} under the condition
$$
|p(x)-p(y)|\leqslant \frac{\tau(|x-y|)}{\big|\ln|x-y|\big|},
\quad x,y\in\Omega, \quad x\neq y,
$$
\begin{equation*}\label{eq1.13}
\lim\limits_{r\rightarrow0}\tau(r)=+\infty \quad \text{and} \quad
\int_{0} \exp\Big( -\gamma\exp\big(\beta \tau(r)\big) \Big)\frac{dr}{r}=+\infty,
\end{equation*}
with some constants $\gamma>0$, $\beta>1$. Particularly, the function
$\tau(r)=L\ln\ln\ln \dfrac{1}{r}$, $0<L<\dfrac{1}{\beta}$ satisfies the above
conditions. These results were generalized in \cite{SkrVoitNA20, ShSkrVoit20}
for a wide class of elliptic and parabolic equations with non-logarithmic Orlicz growth.
Particularly, it was proved in \cite{SkrVoitNA20} the interior continuity of solutions
under the condition
$$
g(x,t_{1}, {\rm v}/r)\leqslant c(K)\,\tau(r)\,g(y,t_{2}, {\rm v}/r),  \ \
(x,t_{1}), (y,t_{2})\in Q_{r,r}(x_{0},t_{0}),\ \
r\leqslant {\rm v} \leqslant K,
$$
$$
\lim\limits_{r\rightarrow0}\tau(r)=+\infty, \quad
\lim\limits_{r\rightarrow0}r^{1-\delta_{0}}\tau(r)=0, \quad
\delta_{0}\in (0,1), \quad \text{and}
$$
\begin{equation}\label{eq1.3}
\int_{0} \exp\left( -\gamma\tau^{\beta}(r) \right)\frac{dr}{r}=+\infty,
\end{equation}
with some constants $\gamma>0$, $\beta>1$.
\begin{remark}
{\rm
Note that for double-phase parabolic equations
(i.e. if $g(x,t,\cdot)=g_{a(x,t)}(\cdot)$) condition \eqref{eq1.3} is valid, if
$\tau(r)=\mu^{q-p}(r)=\left(\ln\ln\dfrac{1}{r}\right)^{L}$, $0\leqslant L<\dfrac{1}{\beta}$.
We also note that if $g(x,t,\cdot)=g_{a(x,t)}(\cdot)$ or $g(x,t,\cdot)=g_{b(x,t)}(\cdot)$ and
$\mu(r)=\ln \dfrac{1}{r}$, then condition \eqref{eq1.3} fails.
}
\end{remark}

Elliptic equations under condition (${\rm g}_{2}$) were considered in \cite{HadzhySkrVoit}.
Particularly, it was proved in \cite{HadzhySkrVoit} that for the continuity of solutions to
elliptic equations with non-logarithmic growth, it is sufficient to satisfy
\begin{equation}\label{eq1.4}
\int_{0} \lambda(r)\, \frac{dr}{r}=+\infty.
\end{equation}
It turns out that for $g(x,\cdot)=g_{a(x)}(\cdot)$ and $g(x,\cdot)=g_{b(x)}(\cdot)$ condition
\eqref{eq1.4} is valid if $\mu(r)=\ln\dfrac{1}{r}$.
We also note that for $\widetilde{g}_{a(x)}(\cdot)$ condition \eqref{eq1.4} is valid if
$\mu(r)=\ln^{\alpha}\dfrac{1}{r}$ and $0\leqslant \alpha+\dfrac{\beta}{q-p}\leqslant1$.
Thus, for double-phase elliptic equations, it was possible to substantially refine the condition
\eqref{eq1.3}.

The aim of this paper is to improve condition \eqref{eq1.3} for parabolic equations \eqref{eq1.1}
under non-logarithmic condition (${\rm g}_{2}$). Before formulating the main results, let us
recall the definition of a bounded weak solution to Eq. \eqref{eq1.1}.
We will use the well-known notation for sets, function spaces and for their elements
(see \cite{DiBenedettoDegParEq, LadUr} for references).
\begin{definition}
{\rm
We say that $u$ is a bounded weak sub(super) solution of Eq. \eqref{eq1.1} if
$u\in C_{{\rm loc}}(0,T; L^{2}_{{\rm loc}}(\Omega))\cap
L^{q}_{{\rm loc}}(0,T; W^{1,q}_{{\rm loc}}(\Omega))\cap L^{\infty}(\Omega_{T})$, and for any
compact $E\subset \Omega$ and for every subinterval $[t_{1},t_{2}]\subset(0,T]$ the integral
identity
\begin{equation}\label{eq1.5}
\int\limits_{E}u\varphi\,dx\bigg|_{t_{1}}^{t_{2}}+\int\limits_{t_{1}}^{t_{2}}\int\limits_{E}
\left\{ -u\varphi_{\tau}+ \mathbf{A}(x,\tau, \nabla u) \nabla\varphi \right\}dxd\tau
\leqslant (\geqslant)\, 0
\end{equation}
holds true for any testing function $\varphi\in W^{1,2}(0,T; L^{2}(E))
\cap L^{q}(0,T; W^{1,q}_{0}(\Omega))$, $\varphi\geqslant0$.
}
\end{definition}

It would be technically convenient to have a formulation of weak solution that involves $u_{t}$.
Let $\rho(x)\in C_{0}^{\infty}(\mathbb{R}^{n})$, $\rho(x)\geqslant0$, $\rho(x)\equiv 0$ for
$|x|>1$ and $\int_{\mathbb{R}^{n}}\rho(x)\,dx=1$, and set
$$
\rho_{h}(x):=h^{-n}\rho(x/h), \quad
u_{h}(x,t):= h^{-1}\int\limits_{t}^{t+h}\int\limits_{\mathbb{R}^{n}}
u(y,\tau)\rho_{h}(x-y)\,dyd\tau.
$$
Fix $t\in(0,T)$ and let $h>0$ be so small that $0<t<t+h<T$. In \eqref{eq1.5} take $t_{1}=t$,
$t_{2}=t+h$ and replace $\varphi$ by $\int_{\mathbb{R}^{n}}\varphi(y,t)\rho_{h}(x-y)\,dy$.
Dividing by $h$, since the testing function does not depend on $\tau$, we obtain
\begin{equation}\label{eq1.6}
\int\limits_{E\times\{t\}}\left\{ \frac{\partial u_{h}}{\partial t}\,\varphi+
[\mathbf{A}(x,t,\nabla u)]_{h}\nabla\varphi \right\}dx\leqslant(\geqslant)\,0,
\end{equation}
for all $t\in(0,T-h)$ and for all non-negative $\varphi\in W^{1,q}_{0}(E)$.

Similarly to that of \cite[Chapter~2]{DiBenedettoDegParEq} we can prove that if $u$ is a
weak sub(super) solution to \eqref{eq1.1} then the truncations $+(u-k)_{+}$, $-(u-k)_{-}$
for all $k\in \mathbb{R}$ are weak sub(super) solutions to \eqref{eq1.1} in the sense
\eqref{eq1.6} with $\mathbf{A}(x,t,\nabla u)$ replaced by $\mathbf{A}(x,t,\pm\nabla (u-k)_{\pm})$.

To formulate our results we also need some additional assumptions on the function
$g(x,t,{\rm v})$. Further we will show that these conditions arise naturally.
Fix $(x_{0},t_{0})\in \Omega_{T}$ and set
$$
\psi(x,t,{\rm v}):=\frac{g(x,t,{\rm v})}{{\rm v}}, \ \ (x,t)\in \Omega_{T},
\ \ {\rm v}>0.
$$
We assume that there exist $R_{0}>0$, $b_{0}$, $\delta\geqslant0$ such that the function
$\psi$ satisfies one of the following conditions:

\begin{itemize}
\item[($\Psi_{1}$)]
(''\textbf{degenerate}'' \textbf{case})
$\psi(x_{0},t_{0}, {\rm v})$ is non-decreasing for ${\rm v}>b_{0}R_{0}^{-\delta}$,
\end{itemize}
or
\begin{itemize}
\item[($\Psi_{2}$)]
(''\textbf{singular}'' \textbf{case})
$\psi(x_{0},t_{0}, {\rm v})$ is non-increasing for ${\rm v}>b_{0}R_{0}^{-\delta}$.
\end{itemize}
Note that in the case $p\geqslant2$ or $q\leqslant2$ conditions ($\Psi_{1}$) or ($\Psi_{2}$) with
$b_{0}=\delta=0$, respectively, are consequences of condition (${\rm g}_{1}$).

\begin{remark}\label{rem1.5}
{\rm
If $p<2<q$ and $a(x_{0},t_{0})=0$, then the function
$$
\psi_{a(x_{0},t_{0})}({\rm v})={\rm v}^{\,p-2}+a(x_{0},t_{0}){\rm v}^{\,q-2}, \quad
{\rm v}>0,
$$
satisfies ($\Psi_{2}$) with $b_{0}=\delta=0$. And if $a(x_{0},t_{0})>0$ and $p<2<q$, then we
choose $R_{0}$ from the condition
$AR_{0}^{\,q-p}\mu^{q-p}(R_{0})=\dfrac{1}{2}\,a(x_{0},t_{0})$, then since
$$
\psi'_{a(x_{0},t_{0})}({\rm v})=
{\rm v}^{\,p-3}(p-2+(q-2)a(x_{0},t_{0}){\rm v}^{\,q-p})\geqslant0
\quad \text{if} \ \
{\rm v}\geqslant \left( \frac{2-p}{(q-2)\,a(x_{0},t_{0})} \right)^{\frac{1}{q-p}},
$$
which implies ($\Psi_{1}$) with $\delta=1$ and
$b_{0}=\left( \dfrac{2-p}{2A(q-2)} \right)^{\frac{1}{q-p}}$.

Similarly, the function
$\widetilde{\psi}_{a(x_{0},t_{0})}({\rm v})={\rm v}^{\,p-2}
+a(x_{0},t_{0}){\rm v}^{\,\widetilde{q}-2}[1+\ln(1+{\rm v})]^{\beta}$,
$\beta>0$, ${\rm v}>0$, satisfies condition
($\Psi_{2}$) with $b_{0}=\delta=0$ if $p<2<q=\widetilde{q}+\beta$ and $a(x_{0},t_{0})=0$. 
Moreover, if $p<2<\widetilde{q}$ and $a(x_{0},t_{0})>0$, then
$\widetilde{\psi}_{a(x_{0},t_{0})}({\rm v})$ satisfies condition ($\Psi_{1}$) with $\delta=1$
and $b_{0}=\left( \dfrac{2-p}{2A(\widetilde{q}-2)} \right)^{\frac{1}{\widetilde{q}-p}}$.
In addition, if $\widetilde{q}<2<\widetilde{q}+\beta$ and $a(x_{0},t_{0})>0$, then
$$
\widetilde{\psi}'_{a(x_{0},t_{0})}({\rm v})\leqslant a(x_{0},t_{0})
{\rm v}^{\widetilde{q}-p}\,[1+\ln(1+{\rm v})]^{\beta}
\left(\widetilde{q}-2+\frac{\beta}{1+\ln(1+{\rm v})}  \right)\leqslant0
\quad \text{if} \ \ {\rm v}\geqslant e^{\frac{\beta}{2-\widetilde{q}}}-1,
$$
which implies ($\Psi_{2}$) with $\delta=0$ and $b_{0}=e^{\frac{\beta}{2-\widetilde{q}}}-1$.
And finally, if $\widetilde{q}=2$ and $a(x_{0},t_{0})>0$,
then
$$
\begin{aligned}
\widetilde{\psi}'_{a(x_{0},t_{0})}({\rm v})&=
\frac{{\rm v}^{p-3}}{1+{\rm v}}
\left( \beta a(x_{0},t_{0})\,{\rm v}^{3-p}[1+\ln(1+{\rm v})]^{\beta-1}-(2-p)(1+{\rm v})  \right)
\\
&\geqslant\frac{{\rm v}^{p-2}}{1+{\rm v}}
\left( \beta a(x_{0},t_{0})\, {\rm v}^{2-p}[1+\ln(1+{\rm v})]^{-1}-2(2-p)\right)
\\
&\geqslant \frac{(2-p){\rm v}^{p-2}}{1+{\rm v}} \left( \frac{\beta}{4}\,
a(x_{0},t_{0})\, {\rm v}^{\frac{2-p}{2}}-2\right)
\geqslant 0
\end{aligned}
$$
for
${\rm v}\geqslant \left( \dfrac{8}{\beta a(x_{0},t_{0})} \right)^{\frac{2}{2-p}}+1$.
Choosing $R_{0}$ from the condition
$$
AR_{0}^{\,2-p}\mu^{\,2-p}(R_{0})=\frac{1}{2}\,a(x_{0},t_{0}),
$$
we arrive at condition ($\Psi_{1}$) with $\delta=2$ and
$b_{0}=1+\left(\dfrac{4}{\beta A} \right)^{\frac{2}{2-p}}$.
}
\end{remark}
\begin{remark}\label{rem1.6}
{\rm
If $p<2$ and $b(x_{0},t_{0})=0$, then the function
$$
\psi_{b(x_{0},t_{0})}({\rm v})={\rm v}^{\,p-2}
\left[ 1+\ln\big(1+b(x_{0},t_{0}){\rm v}\big) \right],
\ \ {\rm v}>0,
$$
satisfies condition ($\Psi_{2}$) with $b_{0}=\delta=0$. To chek condition
($\Psi_{2}$) in the case $p<2$ and $b(x_{0},t_{0})>0$ we note that
$$
\begin{aligned}
\psi'_{b(x_{0},t_{0})}({\rm v})
&=(p-2){\rm v}^{\,p-3}
\left[ 1+\ln\big(1+b(x_{0},t_{0}){\rm v}\big) \right]
+{\rm v}^{\,p-2} \frac{b(x_{0},t_{0})}{1+b(x_{0},t_{0}){\rm v}}
\\
&={\rm v}^{\,p-3}\left[ 1+\ln\big(1+b(x_{0},t_{0}){\rm v}\big) \right]
\left( p-2+\frac{b(x_{0},t_{0}){\rm v}}{1+b(x_{0},t_{0}){\rm v}}\,
\frac{1}{1+\ln\big(1+b(x_{0},t_{0}){\rm v}\big)} \right)
\\
&\leqslant {\rm v}^{\,p-3}\left[ 1+\ln\big(1+b(x_{0},t_{0}){\rm v}\big) \right]
\left( p-2+
\frac{1}{1+\ln\big(1+b(x_{0},t_{0}){\rm v}\big)} \right)
\\
&\leqslant 0, \quad \text{if} \ \
{\rm v}\geqslant \frac{e^{\frac{1}{2-p}}-1}{b(x_{0},t_{0})}.
\end{aligned}
$$
Choosing $R_{0}$ from the condition $BR_{0}\mu(R_{0})=\dfrac{1}{2}\,b(x_{0},t_{0})$,
we arrive at ($\Psi_{2}$) with $\delta=1$ and $b_{0}=(2B)^{-1}(e^{\frac{1}{2-p}}-1)$.
}
\end{remark}
\begin{remark}\label{rem1.7}
{\rm
We note that in this paper, conditions ($\Psi_{1}$) and ($\Psi_{2}$) on the function $\psi$
are weaker and more natural in comparison with conditions (${\rm g}_{12}$) and (${\rm g}_{22}$)
of paper \cite{SkrVoitNA20}. All results of \cite{SkrVoitNA20} can be generalized to cases
($\Psi_{1}$) and ($\Psi_{2}$), additionally using logarithmic estimates (see below Section
\ref{Sect2}, inequality \eqref{eq2.4} and Section \ref{Sect3},
Lemmas \ref{lem3.3} and \ref{lem3.4}). We leave out the detailes for which we refer the reader
to \cite{SkrVoitNA20}. In addition, we note that conditions (${\rm g}_{12}$) and (${\rm g}_{22}$)
from \cite{SkrVoitNA20} for double-phase parabolic equations
(i.e. if $g(x,t,\cdot)=g_{a(x,t)}(\cdot)$) do not cover the case $a(x_{0},t_{0})=0$ and $p<2<q$.
In this paper, this case is studied.
For other well-known cases when $1<p\leqslant q\leqslant 2$ or
$2\leqslant p \leqslant q < +\infty$,
and coefficients $\mathbf{A}$
in \eqref{eq1.1} are independent of $x$ and $t$, we refer the reader to the papers of Hwang and
Lieberman \cite{HwangLieberman287, HwangLieberman288}.
}
\end{remark}


Our main result of this paper reads as follows:
\begin{theorem}\label{th1.1}
Let $u$ be a bounded weak solution to Eq. \eqref{eq1.1} in $\Omega_{T}$. Fix
$(x_{0},t_{0})\in \Omega_{T}$ and let conditions $({\rm g}_{1})$, $({\rm g}_{2})$,
$(\Psi_{1})$ or $(\Psi_{2})$ be fulfilled in some cylinder
$Q_{R_{0}, R_{0}}(x_{0},t_{0})\subset\Omega_{T}$. Assume also that
\begin{equation}\label{eq1.7}
\int_{0}\lambda(r)\,\frac{dr}{r}=+\infty,
\end{equation}
then $u$ is continuous at $(x_{0},t_{0})$.
Moreover, in the case $p\geqslant2$ or $q\leqslant2$, $u\in C_{{\rm loc}}(\Omega_{T})$.
\end{theorem}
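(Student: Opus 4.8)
The plan is to obtain continuity at $(x_{0},t_{0})$ from a quantitative decay of oscillation. Since $u$ is bounded we may start from a cylinder $Q_{R_{0},R_{0}}(x_{0},t_{0})$ on which $\osc u=:\omega_{0}<\infty$, and we build a nested family of (backward) cylinders $Q_{j}$ with vertex $(x_{0},t_{0})$ and space radii $r_{j}\downarrow0$ chosen to decrease geometrically, showing that $\omega_{j}:=\esssup_{Q_{j}}u-\essinf_{Q_{j}}u$ tends to $0$ at a rate governed by $\lambda$. Everywhere we replace $g(x,t,\cdot)$ by the frozen function $g(x_{0},t_{0},\cdot)$: condition $({\rm g}_{2})$ gives $g(x,t,{\rm v}/r)\leqslant c_{1}\,g(x_{0},t_{0},{\rm v}/r)$ on $Q_{r,K_{3}r}(x_{0},t_{0})$ whenever ${\rm v}\leqslant K\lambda(r)$, so every energy estimate is performed as for the model equation with coefficients evaluated at the centre, provided we only ever use truncation levels and gradients living at the current scale. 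The monotonicity hypotheses $(\Psi_{1})$ (degenerate case) and $(\Psi_{2})$ (singular case) are exactly what is needed to set up self-similar \emph{intrinsic} cylinders $Q_{j}=B_{r_{j}}(x_{0})\times(t_{0}-\theta_{j},t_{0})$ with $\theta_{j}\asymp\omega_{j}r_{j}/g(x_{0},t_{0},\omega_{j}/r_{j})$ (this choice balances the parabolic and the elliptic terms in the energy inequality; the structural constant in front of $\theta_{j}$ and the direction in which the cylinders are stretched depend on which of $(\Psi_{1})$, $(\Psi_{2})$ holds), and to guarantee that these cylinders stay nested as $r_{j}$ and $\omega_{j}$ decrease; since the relevant arguments $\omega_{j}/r_{j}$ are large, only monotonicity of $\psi(x_{0},t_{0},\cdot)$ for large values is used. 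The condition $\lim_{r\to0}\lambda(r)/r=+\infty$ guarantees that these intrinsic scales overlap correctly, while $r^{1-\delta_{0}}/\lambda(r)\to0$ and $\lambda(2r)\leqslant(3/2)^{1-\delta_{0}}\lambda(r)$ are used to compare consecutive dyadic scales and to absorb lower-order terms.

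The first block of work is the machinery on a generic intrinsic cylinder: Caccioppoli-type energy inequalities for the truncations $+(u-k)_{+}$ and $-(u-k)_{-}$, which by the remark following \eqref{eq1.6} are again weak sub/super solutions, together with the logarithmic estimates of Section~\ref{Sect2}, inequality~\eqref{eq2.4}, and Lemmas~\ref{lem3.3}--\ref{lem3.4}. Here $({\rm g}_{1})$ serves to compare $g$ at different values of its last argument and to run the measure-to-pointwise De Giorgi iteration through a Sobolev embedding adapted to the $(p,q)$-growth, while $({\rm g}_{2})$ is used to freeze the $(x,t)$-dependence for truncation levels $\lesssim\lambda(r)$; all estimates are thereby reduced to those for the autonomous model equation attached to $g(x_{0},t_{0},\cdot)$.

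The heart of the argument is a single-scale dichotomy. On the intrinsic cylinder built from $(\omega_{j},r_{j})$ one asks whether the set where $u$ is close to its supremum over $Q_{j}$ occupies only a small fraction of the cylinder. In the ``small-measure'' alternative the De Giorgi iteration fed by the energy inequalities forces $u\leqslant\esssup_{Q_{j}}u-c\,\omega_{j}$ on a smaller intrinsic cylinder; in the complementary alternative one propagates the corresponding lower bound forward in time by means of the logarithmic estimates and obtains the symmetric conclusion $u\geqslant\essinf_{Q_{j}}u+c\,\omega_{j}$. Either way one arrives at a recursion of the form
$$
\omega_{j+1}\leqslant(1-c\,\lambda(r_{j}))\,\omega_{j}+\gamma\,r_{j}\lambda(r_{j}),
$$
with $c\in(0,1)$ and $\gamma$ depending only on the data. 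I expect the main obstacle to lie precisely here: when $\omega_{j}$ is much larger than $\lambda(r_{j})$ the coefficients cannot be frozen at the full level $\omega_{j}$ but only up to level $K\lambda(r_{j})$, and the constant $c_{1}(K)$ in $({\rm g}_{2})$ deteriorates as $K$ grows, so that the gain per scale is merely the factor $1-c\,\lambda(r_{j})$ rather than a fixed $\eta<1$; extracting exactly this factor — and no worse — is what makes the integral condition \eqref{eq1.7} sharp, and it is here that the weaker hypotheses $(\Psi_{1})$/$(\Psi_{2})$ must be combined with the logarithmic estimates, in place of the stronger conditions $({\rm g}_{12})$, $({\rm g}_{22})$ of \cite{SkrVoitNA20}.

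It remains to iterate. The quasi-doubling property $\lambda(2r)\leqslant(3/2)^{1-\delta_{0}}\lambda(r)$ makes $\sum_{j}\lambda(r_{j})$ comparable to $\int_{0}^{R_{0}}\lambda(r)\,\frac{dr}{r}$, which diverges by \eqref{eq1.7}, so $\prod_{j}(1-c\,\lambda(r_{j}))=0$; since moreover $r_{j}\lambda(r_{j})\to0$, the standard iteration lemma applied to the recursion above gives $\omega_{j}\to0$, with the explicit modulus
$$
\osc_{Q_{j}}u\leqslant\gamma\exp\Bigl(-c\!\int_{r_{j}}^{R_{0}}\lambda(r)\,\frac{dr}{r}\Bigr)+\gamma\,r_{j}\lambda(r_{j}),
$$
which tends to $0$ as $j\to\infty$. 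This is the continuity of $u$ at $(x_{0},t_{0})$. Finally, when $p\geqslant2$ or $q\leqslant2$, conditions $(\Psi_{1})$, respectively $(\Psi_{2})$, hold automatically with $b_{0}=\delta=0$ at every point as a consequence of $({\rm g}_{1})$, so the argument applies at every $(x_{0},t_{0})\in\Omega_{T}$ and therefore $u\in C_{\rm loc}(\Omega_{T})$.
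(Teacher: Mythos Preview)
Your outline follows the paper's strategy closely: intrinsic scaling, freezing via $({\rm g}_{2})$ at truncation levels $\lesssim\lambda(r)$, a dichotomy yielding the reduction factor $(1-c\,\lambda(r_{j}))$, and iteration driven by the divergence of $\int\lambda(r)\,dr/r$. Two points need correction.

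First, the lower-order term in your recursion is not the one that actually arises. The ``either\ldots or'' structure of Lemmas~\ref{lem3.3}--\ref{lem3.6} reads: either $\omega\leqslant 2^{s}(1+b_{0})\,r^{1-\delta/\overline{\delta}}/\lambda(r)$, or the oscillation is reduced. The threshold comes from the fact that the monotonicity of $\psi(x_{0},t_{0},\cdot)$ in $(\Psi_{1})$/$(\Psi_{2})$ is assumed only for arguments $\geqslant b_{0}R_{0}^{-\delta}$, so one needs $\omega\lambda(r)/r$ above this level before the intrinsic geometry can be set up. Hence the recursion is
\[
\omega_{j+1}=\max\Bigl\{(1-c\,\lambda(r_{j}))\,\omega_{j},\ c_{\ast}(1+b_{0})\,\frac{r_{j}^{\,1-\delta/\overline{\delta}}}{\lambda(r_{j})}\Bigr\},
\]
and after summation the residual term is $\gamma(1+b_{0})\,\rho^{1-\delta_{0}}/\lambda(\rho)$ evaluated at the \emph{initial} radius $\rho$, not your $\gamma\,r_{j}\lambda(r_{j})$; continuity follows by first letting $j\to\infty$ and then $\rho\to0$.

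Second, you treat the degenerate and singular cases uniformly, but in the paper the singular case $(\Psi_{2})$ requires a genuinely different mechanism. The logarithmic estimate~\eqref{eq2.4} and the alternative you describe handle $(\Psi_{1})$ (Section~\ref{Sect4}); for $(\Psi_{2})$ one needs the additional inequality~\eqref{eq2.5}, built from the test function $\Phi_{k}(x_{0},t_{0},v_{\pm})$, together with a separate expansion-of-positivity iteration (Section~\ref{Sect5}) to reach the same oscillation reduction.
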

\begin{remark}\label{rem1.8}
{\rm
We note that in the case when $g(x,t,\cdot)=g_{a(x,t)}(\cdot)$ or $g(x,t,\cdot)=g_{b(x,t)}(\cdot)$
condition \eqref{eq1.7} can be rewritten in the form
$$
\int_{0} \frac{1}{\mu(r)}\,\frac{dr}{r}=+\infty.
$$
The function $\mu(r)=\ln\dfrac{1}{r}$ satisfies the above condition.

We also note that in the case $g(x,t,\cdot)=\widetilde{g}_{a(x,t)}(\cdot)$
condition \eqref{eq1.7} can be rewritten as
$$
\int_{0} \frac{\ln^{-\frac{\beta}{q-p}}\frac{1}{r}}{\mu(r)}\,
\frac{dr}{r}=+\infty.
$$
The function $\mu(r)=\ln^{\alpha}\dfrac{1}{r}$,
$0\leqslant\alpha+\dfrac{\beta}{q-p}\leqslant1$,
satisfies the above condition.
}
\end{remark}
\begin{remark}\label{rem1.9}
{\rm
Note that for the evolution $p(x,t)$-Laplace equation in the case when $p(x_{0},t_{0})=2$, or for
the case $g(x,t,{\rm v})={\rm v}^{\,p-1}\big(1+b(x,t)\ln(1+{\rm v})\big)$ and $p=2$, the result of
Theorem \ref{th1.1} seems new even for $\lambda(r)=1$.
}
\end{remark}

Now let's say a few words about the structure of the rest of the article and the approaches used.
To prove the continuity of solutions
to Eq. \eqref{eq1.1} (Theorem \ref{th1.1}), we develop Di\,Benedetto's innovative intrinsic scaling method.
We use parabolic $\mathcal{B}_{1,g}$ classes of De\,Giorgi-Ladyzhenskaya-Ural'tseva (see Section \ref{Sect2}),
which were essentially defined by the authors in \cite{SkrVoitNA20}.
In Section \ref{Sect2}, we assert the point-wise continuity of functions
belonging to these classes under conditions
$({\rm g}_{1})$, $({\rm g}_{2})$, \eqref{eq1.7}, $(\Psi_{1})$ or $(\Psi_{2})$
(see Theorem \ref{th2.1}).
We also show that solutions of Eq. \eqref{eq1.1} belong to $\mathcal{B}_{1,g}$ classes.
This fact and Theorem \ref{th2.1} imply the validity of Theorem \ref{th1.1}.
Section \ref{Sect3} contains auxiliary material
(De\,Giorgi-Poincar\'{e} inequality, De\,Giorgi-type lemmas, etc.) needed to prove Theorem \ref{th2.1}.
Finally, the proof of Theorem \ref{th2.1} is given in Sections \ref{Sect4} and \ref{Sect5}
for the degenerate and singular cases (the cases ($\Psi_{1}$) and ($\Psi_{2}$)), respectively.
In addition, note that for the double-phase equations
(i.e. if $g(x,t,\cdot)=g_{a(x,t)}(\cdot)$) we cover the case $a(x_{0},t_{0})=0$ and
$p<2<q$, which has not been studied previously.



\section{Parabolic $\mathcal{B}_{1,g}(\Omega_{T})$ classes}\label{Sect2}

As it was already mentioned, parabolic $\mathcal{B}_{1,g}(\Omega_{T})$ classes were practically
defined in \cite{SkrVoitNA20}. In this paper we give their generalization, this is due to the
fact that we consider the cases $p=2$ or $q=2$.

\begin{definition}
{\rm
We say that a measurable function $u:\Omega_{T}\rightarrow\mathbb{R}$
belongs to the parabolic class
$\mathcal{B}_{1,g}(\Omega_{T})$, if
$u\in C_{{\rm loc}}(0,T;L^{2}_{{\rm loc}}(\Omega))\cap
L^{1}_{{\rm loc}}(0,T; W^{1,1}_{{\rm loc}}(\Omega))\cap L^{\infty}(\Omega_{T})$,
$\esssup\limits_{\Omega_{T}}|u|\leqslant M$ and there exist numbers $1<p<q$,
$c_{2}\geqslant q$, $c_{3}>0$ such that for any cylinder
$Q_{8r,8\theta}(\overline{x}, \overline{t})\subset Q_{8r,8r}(\overline{x}, \overline{t})
\subset \Omega_{T}$, any $k$, $l\in \mathbb{R}$, $k<l$, $|k|$, $|l|<M$, any
$\varepsilon\in (0,1]$, any $\sigma\in(0,1)$, for any
$\zeta(x)\in C_{0}^{\infty}(B_{r}(\overline{x}))$, $0\leqslant\zeta(x)\leqslant1$,
$\zeta(x)=1$ in $B_{r(1-\sigma)}(\overline{x})$,
$|\nabla \zeta|\leqslant(\sigma r)^{-1}$, and for any
$\chi(t)\in C_{0}^{\infty}(\mathbb{R}_{+})$,
$0\leqslant\chi(t)\leqslant1$, the following inequalities hold:
\begin{equation}\label{eq2.1}
\begin{aligned}
&\iint\limits_{A^{+}_{k,r,\theta}\setminus A^{+}_{l,r,\theta}}
g\left(x,t, \frac{M_{+}(k,r,\theta)}{r} \right)|\nabla u|\,\zeta^{\,c_{2}}\chi\, dxdt
\\
&\leqslant
c_{3}\,\frac{M_{+}(k,r,\theta)}{\varepsilon r}
\iint\limits_{A^{+}_{k,r,\theta}\setminus A^{+}_{l,r,\theta}}
g\left(x,t, \frac{M_{+}(k,r,\theta)}{r} \right)dxdt
\\
&+c_{3}\sigma^{-q}\varepsilon^{\,p-1}
\bigg\{ \int\limits_{B_{r}(\overline{x})\times\{\overline{t}-\theta\}}
(u-k)_{+}^{2}\,\zeta^{\,c_{2}}\chi(\overline{t}-\theta)\, dx+
\iint\limits_{A^{+}_{k,r,\theta}}(u-k)_{+}^{2}\,\zeta^{\,c_{2}}|\,\chi_{t}|\,dxdt
\\
&\hskip 25mm+ \iint\limits_{A^{+}_{k,r,\theta}}
g\left(x,t, \frac{(u-k)_{+}}{r}  \right) \frac{(u-k)_{+}}{r}\,\zeta^{\,c_{2}-q}\chi\,dxdt \bigg\},
\end{aligned}
\end{equation}

\begin{equation}\label{eq2.2}
\begin{aligned}
&\iint\limits_{A^{-}_{l,r,\theta}\setminus A^{-}_{k,r,\theta}}
g\left(x,t, \frac{M_{-}(l,r,\theta)}{r} \right)|\nabla u|\,\zeta^{\,c_{2}}\chi\, dxdt
\\
&\leqslant
c_{3}\,\frac{M_{-}(l,r,\theta)}{\varepsilon r}
\iint\limits_{A^{-}_{l,r,\theta}\setminus A^{-}_{k,r,\theta}}
g\left(x,t, \frac{M_{-}(l,r,\theta)}{r} \right)dxdt
\\
&+c_{3}\sigma^{-q}\varepsilon^{\,p-1}
\bigg\{ \int\limits_{B_{r}(\overline{x})\times\{\overline{t}-\theta\}}
(u-l)_{-}^{2}\,\zeta^{\,c_{2}}\chi(\overline{t}-\theta)\, dx+
\iint\limits_{A^{-}_{l,r,\theta}}(u-l)_{-}^{2}\,\zeta^{\,c_{2}}|\,\chi_{t}|\,dxdt
\\
&\hskip 25mm+ \iint\limits_{A^{-}_{l,r,\theta}}
g\left(x,t, \frac{(u-l)_{-}}{r}  \right) \frac{(u-l)_{-}}{r}\,\zeta^{\,c_{2}-q}\chi\,dxdt \bigg\},
\end{aligned}
\end{equation}

\begin{equation}\label{eq2.3}
\begin{aligned}
&\int\limits_{B_{r}(\overline{x})\times\{t\}}
(u-k)_{\pm}^{2}\,\zeta^{\,c_{2}}\chi\,dx
\leqslant
\int\limits_{B_{r}(\overline{x})\times\{\overline{t}-\theta\}}
(u-k)_{\pm}^{2}\,\zeta^{\,c_{2}}\chi(\overline{t}-\theta)\,dx
\\
&+c_{3}\sigma^{-1}
\bigg\{\iint\limits_{A^{\pm}_{k,r,\theta}}
(u-k)_{\pm}^{2}\,\zeta^{\,c_{2}}|\chi_{t}|\,dxdt+\iint\limits_{A^{\pm}_{k,r,\theta}}
g\left(x,t, \frac{(u-k)_{\pm}}{r}  \right)
\frac{(u-k)_{\pm}}{r}\,\zeta^{\,c_{2}-q}\chi\,dxdt \bigg\},
\\
&\hskip 112mm \text{for all} \ t\in(\overline{t}-\theta, \overline{t}),
\end{aligned}
\end{equation}

\begin{equation}\label{eq2.4}
\begin{aligned}
&\int\limits_{B_{r}(\overline{x})\times\{t\}}
\ln_{+}^{2}\frac{M_{\pm}(k,r,\theta)}{M_{\pm}(k,r,\theta)-(u-k)_{\pm}+a}\,
\zeta^{\,c_{2}}\,dx
\\
&\leqslant
\int\limits_{B_{r}(\overline{x})\times\{\overline{t}-\theta\}}
\ln_{+}^{2}\frac{M_{\pm}(k,r,\theta)}{M_{\pm}(k,r,\theta)-(u-k)_{\pm}+a}\,
\zeta^{\,c_{2}}\,dx
\\
&+c_{3}\sigma^{-q}r^{-2}\ln\frac{M_{\pm}(k,r,\theta)}{a}
\iint\limits_{A^{\pm}_{k,r,\theta}}
\psi\left( x,t, \frac{M_{\pm}(k,r,\theta)-(u-k)_{\pm}+a}{r} \right)
\,\zeta^{\,c_{2}-q}\,dxdt,
\\
& \hskip 54,5mm \text{for all} \ t\in(\overline{t}-\theta, \overline{t})
\ \text{and for} \ 0<a< M_{\pm}(k,r,\theta).
\end{aligned}
\end{equation}
Here $(u-k)_{\pm}:=\max\{\pm(u-k), 0\}$,
$M_{\pm}(k,r,\theta):=\esssup\limits_{Q_{r,\theta}(\overline{x},\overline{t})} (u-k)_{\pm}$
and
\newline
$A^{\pm}_{k,r,\theta}:=Q_{r,\theta}(\overline{x},\overline{t})\cap \{(u-k)_{\pm}>0\}$.
}
\end{definition}

As already noted, we will distinguish two cases: ''degenerate'' and ''singular''.
Fix $(x_{0},t_{0})\in \Omega_{T}$,
in the ''singular'' case, i.e. if condition $(\Psi_{2})$ is true, we additionally assume
that for all $t\in(t_{0}-\theta, t_{0})$ and for any $k>0$, $\varepsilon$,
$\varepsilon_{1}\in(0,1)$ there holds:
\begin{equation}\label{eq2.5}
\begin{aligned}
&D^{-}\int\limits_{B_{r}(x_{0})\times\{t\}}
\Phi_{k}(x_{0},t_{0},v_{\pm})\,\frac{t-t_{0}+\theta}{\theta}\,\zeta^{\,c_{2}}\,dx
\\
&+\frac{r\varepsilon_{1}^{1-p}}{c_{3}}
\int\limits_{B_{r}(x_{0})\times\{t\}}
\left| \nabla \ln\frac{(1+\varepsilon)k}{w_{k,\varepsilon}} \right|
\frac{g(x,t,w_{k,\varepsilon}/r)}{g(x_{0},t_{0},w_{k,\varepsilon}/r)}\,
\frac{t-t_{0}+\theta}{\theta}\,\zeta^{\,c_{2}}\,dx
\\
&\leqslant
\frac{c_{3}}{\theta} \int\limits_{B_{r}(x_{0})\times\{t\}}
\Phi_{k}(x_{0},t_{0},v_{\pm})\,\zeta^{\,c_{2}}\,dx
\\
&+
c_{3}(1+\varepsilon_{1}^{-p})\sigma^{-q}\int\limits_{B_{r}(x_{0})\times\{t\}}
\frac{g(x,t,w_{k,\varepsilon}/r)}{g(x_{0},t_{0},w_{k,\varepsilon}/r)}\,
\zeta^{\,c_{2}-q}\,dx,
\end{aligned}
\end{equation}
where $v_{+}:=\mu_{+}-u$, $v_{-}:=u-\mu_{-}$,
$\mu_{+}\geqslant \esssup\limits_{Q_{r,\theta}(x_{0},t_{0})}u$,
$\mu_{-}\leqslant \essinf\limits_{Q_{r,\theta}(x_{0},t_{0})}u$,
$w_{k,\varepsilon}=k(1+\varepsilon)-(v_{\pm}-k)_{-}$,
$$
\Phi_{k}(x,t,v_{\pm}):=\int\limits_{0}^{(v_{\pm}-k)_{-}}
\frac{(1+\varepsilon)k-s}{\mathcal{G}\left(x,t, \frac{(1+\varepsilon)k-s}{r} \right)}\,ds,
\quad
\mathcal{G}(x,t,{\rm v}):=\int\limits_{0}^{{\rm v}}g(x,t,s)\,ds,
$$
and the notation $D^{-}$ is used to denote the derivative
$$
D^{-}f(t):=\limsup\limits_{h\rightarrow0}
\frac{f(t)-f(t-h)}{h}.
$$

The parameters $n$, $p$, $q$, $K_{1}$, $K_{2}$, $K_{3}$, $M$, $c_{1}(2M)$, $c_{2}$, $c_{3}$, are the data,
and we say that a generic constant $\gamma$ depends only upon the data, if it can be
quantitatively determined a priory only in termms of the indicated parameters.
Note that the constants $b_{0}$ and $\delta$ can be equal to zero, in the proof we keep an explicit track
of the dependence of the various constants on $b_{0}$ and $\delta$.

Our main result of this Section reads as follows.

\begin{theorem}\label{th2.1}
Let $u\in \mathcal{B}_{1,g}(\Omega_{T})$, fix $(x_{0},t_{0})\in \Omega_{T}$ such that
$Q_{8R_{0},8R_{0}}(x_{0},t_{0})\subset\Omega_{T}$ and let hypotheses
$({\rm g}_{1})$, $({\rm g}_{2})$, \eqref{eq1.7}, $(\Psi_{1})$ or $(\Psi_{2})$ be fulfilled
in $Q_{R_{0},R_{0}}(x_{0},t_{0})$. Then $u$ is continuous at $(x_{0},t_{0})$.
Moreover, if $p\geqslant2$ or $q\leqslant2$, then $u\in C_{{\rm loc}}(\Omega_{T})$.
\end{theorem}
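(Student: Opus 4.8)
The plan is to carry out Di\,Benedetto's intrinsic-scaling oscillation iteration directly on the energy inequalities \eqref{eq2.1}--\eqref{eq2.5} that define the class $\mathcal{B}_{1,g}$, keeping careful track of how the freezing constant $c_{1}$ from $({\rm g}_{2})$ and the function $\lambda$ propagate through the iteration. First I would fix $(x_{0},t_{0})$, put $\omega_{0}:=2M$, and construct nested cylinders $Q_{j}:=Q_{r_{j},\theta_{j}}(x_{0},t_{0})$ with $r_{j+1}=\sigma r_{j}$, $r_{0}=R_{0}$, for a small fixed $\sigma\in(0,1)$, together with a non-increasing sequence $\omega_{j}\geqslant\osc_{Q_{j}}u$ and an \emph{intrinsic} time length
$$
\theta_{j}\simeq \frac{r_{j}^{2}}{\psi\!\left(x_{0},t_{0},\tfrac{\omega_{j}}{r_{j}}\right)}
=\frac{r_{j}^{2}\,\omega_{j}}{g\!\left(x_{0},t_{0},\tfrac{\omega_{j}}{r_{j}}\right)},
$$
chosen so that, after rescaling, Eq.~\eqref{eq1.1} on $Q_{j}$ behaves like a uniformly parabolic problem of the corresponding homogeneity. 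Condition $(\Psi_{1})$ (resp. $(\Psi_{2})$) is exactly what guarantees that, in the range $\omega_{j}/r_{j}>b_{0}R_{0}^{-\delta}$ relevant to the iteration, $\theta_{j}$ is monotone in $\omega_{j}$ and comparable across two consecutive scales; this forces the degenerate and singular cases to be run separately. Condition $({\rm g}_{1})$ supplies the two-sided power bounds used to compare $g$, $\mathcal{G}$ and $\psi$ at different arguments throughout.

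On a single cylinder $Q_{j}$ I would run the De\,Giorgi alternative: either on a positive fraction of the time-slices of $Q_{j}$ the set where $u$ is within $\omega_{j}/2$ of $\essinf_{Q_{j}}u$ is small in measure, or there is a sub-cylinder on which the set where $u$ is within $\omega_{j}/2$ of $\esssup_{Q_{j}}u$ is small. In either branch I feed \eqref{eq2.1}--\eqref{eq2.3} (and, in the singular case $(\Psi_{2})$, also \eqref{eq2.5}) into the De\,Giorgi--Poincar\'{e} inequality and the De\,Giorgi-type lemmas of Section \ref{Sect3}, and use the logarithmic energy estimate \eqref{eq2.4} (together with Lemmas \ref{lem3.3}--\ref{lem3.4} in the singular case) to pass from smallness of measure on individual time levels to smallness on a whole sub-cylinder. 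The essential point is that every such De\,Giorgi iteration forces one to replace $g(x,t,\cdot)$ by $g(x_{0},t_{0},\cdot)$ up to a multiplicative constant, which $({\rm g}_{2})$ permits only on the velocity range $0<{\rm v}\leqslant 2M\,\lambda(r_{j})$.

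Because of this restriction, the freezing is applied not to the full oscillation but to a truncation of depth $\omega_{j}/2^{s_{j}}$ with $2^{-s_{j}}\simeq \lambda(r_{j})/\omega_{j}$, and the De\,Giorgi iteration on that thin layer improves the bound to $u\leqslant\esssup_{Q_{j}}u-\omega_{j}/2^{s_{j}+1}$ (or the symmetric statement) on $Q_{j+1}$. This yields an oscillation-decay estimate of the form
$$
\omega_{j+1}\leqslant \max\Big\{\ \omega_{j}-c\,\lambda(r_{j}),\ \ C\,r_{j+1}^{\delta_{1}}\ \Big\},
$$
with $c,C,\delta_{1}$ depending only on the data, the second term being immaterial once $\omega_{j}$ is small. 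If $\omega_{j}\not\to0$, then $\omega_{j}\geqslant\omega_{\infty}>0$ for all $j$, so $c$ is bounded below and summation gives $\omega_{N}\leqslant\omega_{0}-c\sum_{j<N}\lambda(r_{j})$; since $\lambda$ is non-decreasing with $\lambda(2r)\leqslant(3/2)^{1-\delta_{0}}\lambda(r)$, the series $\sum_{j}\lambda(\sigma^{j}R_{0})$ is comparable to $\int_{0}\lambda(r)\,\frac{dr}{r}$, which diverges by \eqref{eq1.7}, and $\omega_{N}<0$ for large $N$, a contradiction. Hence $\omega_{j}\to0$, i.e. $u$ is continuous at $(x_{0},t_{0})$ with a modulus read off from $\lambda$. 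When $p\geqslant2$ or $q\leqslant2$, conditions $(\Psi_{1})$ or $(\Psi_{2})$ hold at every point with $b_{0}=\delta=0$, the constants above are uniform on compact subsets of $\Omega_{T}$, and $(x_{0},t_{0})$ was arbitrary, so $u\in C_{{\rm loc}}(\Omega_{T})$.

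The main obstacle is precisely the passage to the non-logarithmic regime: one must carry the dependence on $\lambda(r_{j})$ (through the truncation depth $s_{j}$) and on the freezing constant $c_{1}(2M)$ through the entire De\,Giorgi machinery so that the oscillation drops by an amount proportional to $\lambda(r_{j})$ rather than by a fixed factor, all while the intrinsic time length $\theta_{j}$ itself changes with $\omega_{j}$ and must remain self-consistent from one scale to the next. The most delicate sub-case is $p<2<q$ with double-phase coefficients and $a(x_{0},t_{0})=0$, where the equation is singular in one phase and degenerate in the other; this is what makes the hypotheses $(\Psi_{1})$, $(\Psi_{2})$ and the auxiliary logarithmic inequalities \eqref{eq2.4}--\eqref{eq2.5} indispensable, and it is the genuinely new part of the argument.
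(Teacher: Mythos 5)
Your sketch follows the same overall architecture as the paper — intrinsic-scaling De\,Giorgi alternatives, freezing of $g$ at velocities of size $\lambda(r)$ via $({\rm g}_{2})$, logarithmic estimates \eqref{eq2.4}--\eqref{eq2.5} plus Lemmas~\ref{lem3.3}--\ref{lem3.6}, then an oscillation iteration whose decay is driven by the divergence of $\int_{0}\lambda(r)\,dr/r$. The two ``degenerate''/``singular'' branches, the role of $(\Psi_{1})$, $(\Psi_{2})$ in making the intrinsic time-length self-consistent, and the final summation-versus-integral comparison all match what is done in Sections~\ref{Sect4} and \ref{Sect5}.

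There is, however, one place where the precise form of the decay as you stated it does not agree with what the De\,Giorgi machinery delivers, and where the choice of truncation scale is off. You set $2^{-s_{j}}\simeq\lambda(r_{j})/\omega_{j}$ so that the truncation depth is $\simeq\lambda(r_{j})$ independently of $\omega_{j}$, and you claim the additive decay $\omega_{j+1}\leqslant \omega_{j}-c\lambda(r_{j})$. This is problematic for two reasons. First, if $\omega_{j}<\lambda(r_{j})$ this choice makes $s_{j}<0$ and the De\,Giorgi hypothesis (smallness of $\{v_{\pm}\leqslant \text{depth}\}$ on a slice) becomes vacuously false, so the argument would stall precisely where you want it to finish. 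Second, the freezing and the measure-to-pointwise step in Lemmas~\ref{lem3.3}--\ref{lem3.6} are run with the level $s_{0}$ \emph{fixed} by the data ($2^{s_{0}-1}\geqslant M$) and with depth $\omega\lambda(r)/2^{s_{0}}$, proportional to $\omega$; the resulting inequality is the \emph{multiplicative} decay $\omega_{j+1}\leqslant(1-2^{-s_{3}-1}\lambda(r_{j}))\omega_{j}$ of \eqref{eq4.10}, not the additive one. Your contradiction argument (assume $\omega_{j}\geqslant\omega_{\infty}>0$ and sum) still closes with the multiplicative form, since then $\omega_{j+1}-\omega_{j}\leqslant -c\,\omega_{\infty}\lambda(r_{j})$, so the conclusion survives; but as written, the recursion you display would not come out of the lemmas. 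A second, more substantive omission is the singular branch: the paper does not simply ``also feed \eqref{eq2.5} into the De\,Giorgi--Poincar\'{e} inequality,'' but runs a separate geometric measure-shrinking iteration in levels $k=\varepsilon^{j}\omega\lambda(\rho)$ on the quantities $y_{j}=\sup_{t}Y_{j}(t)$, controlled through the convexity properties \eqref{eq5.7}--\eqref{eq5.8} of $f(x,t,{\rm w})={\rm w}/\mathcal{G}(x,t,{\rm w})$ together with Fubini; your sketch glosses over this, and it is exactly the part that makes the case $p<2<q$ with $a(x_{0},t_{0})=0$ work. Both gaps are repairable without changing your strategy, but they are where the real work lies.
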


We note that the solutions of Eq. \eqref{eq1.1} belong to the correspondig
$\mathcal{B}_{1,g}(\Omega_{T})$ classes. The proof of inequalities
\eqref{eq2.1}--\eqref{eq2.3} is completely similar to the proof of inequalities
(3.1)--(3.3) from \cite[Sect.~4]{SkrVoitNA20}. Let us show inequalities
\eqref{eq2.4} and \eqref{eq2.5}. First note a simple analogues of Young inequality:
\begin{equation}\label{eq2.6}
g(x,t,a)b\leqslant \varepsilon g(x,t,a)a+g(x,t,b/\varepsilon)b,
\quad (x,t)\in \Omega_{T}, \ \ a, b, \varepsilon>0.
\end{equation}

Testing identity \eqref{eq1.6} by
$\left( \ln^{2}_{+}\dfrac{M_{\pm}(k,r,\theta)}{M_{\pm}(k,r,\theta)-(u_{h}-k)_{\pm}+a} \right)'_{u}
\zeta^{\,c_{2}}(x)$, where $\zeta$ is the same as in \eqref{eq2.4}, integrating over
$(\overline{t}-\theta,t)$, $t\in(\overline{t}-\theta,\overline{t})$, then integrating by parts in
the term containing $\dfrac{\partial u_{h}}{\partial t}$, letting $h\rightarrow0$, we obtain
$$
\begin{aligned}
&\int\limits_{B_{r}(\overline{x})\times\{t\}}
\ln^{2}_{+}\dfrac{M_{\pm}(k,r,\theta)}{M_{\pm}(k,r,\theta)-(u-k)_{\pm}+a}\,
\zeta^{\,c_{2}}\,dx
\\
&+\frac{1}{\gamma}\int\limits_{\overline{t}-\theta}^{t}\int\limits_{B_{r}(\overline{x})}
\left( 1+\ln_{+}\dfrac{M_{\pm}(k,r,\theta)}{M_{\pm}(k,r,\theta)-(u-k)_{\pm}+a} \right)
\frac{G(x,t,|\nabla(u-k)_{\pm}|)\,\zeta^{\,c_{2}}}
{(M_{\pm}(k,r,\theta)-(u-k)_{\pm}+a)^{2}}\,dxdt
\\
&\leqslant \int\limits_{B_{r}(\overline{x})\times\{\overline{t}-\theta\}}
\ln^{2}_{+}\dfrac{M_{\pm}(k,r,\theta)}{M_{\pm}(k,r,\theta)-(u-k)_{\pm}+a}\,
\zeta^{\,c_{2}}\,dx
\\
&+\gamma\sigma^{-2}
\int\limits_{\overline{t}-\theta}^{t}\int\limits_{B_{r}(\overline{x})}
\frac{g(x,t,|\nabla(u-k)_{\pm}|)\,\zeta^{\,c_{2}-1}}{M_{\pm}(k,r,\theta)-(u-k)_{\pm}+a}\,
\ln_{+}\dfrac{M_{\pm}(k,r,\theta)}{M_{\pm}(k,r,\theta)-(u-k)_{\pm}+a}\,dxdt.
\end{aligned}
$$
From this, using \eqref{eq2.6}, we arrive at the required \eqref{eq2.4}.

To prove \eqref{eq2.5} we test \eqref{eq1.5} by
$$
\varphi=\dfrac{w_{k,\varepsilon}\,\zeta^{\,c_{2}}}
{\mathcal{G}(x_{0},t_{0}, w_{k,\varepsilon}/r)}\,
\dfrac{t-t_{0}+\theta}{\theta},
$$
assuming that
$u_{t}\in C(t_{0}-\theta, t_{0}; L^{1}(B_{r}(x_{0})))$,
this condition can be removed similarly to that of
\cite[Chap.~4, pp.\,101--102]{DiBenedettoDegParEq}, We obtain for all
$t\in (t_{0}-\theta,t_{0})$
$$
\begin{aligned}
&\frac{\partial}{\partial t}\int\limits_{B_{r}(x_{0})\times\{t\}}
\Phi_{k}(x_{0},t_{0},v_{\pm})\,\frac{t-t_{0}+\theta}{\theta}\,
\zeta^{\,c_{2}}\,dx
\\
&+
\int\limits_{B_{r}(x_{0})\times\{t\}}
\frac{G(x,t,|\nabla(v_{\pm}-k)_{-}|)}{\mathcal{G}(x_{0},t_{0},w_{k,\varepsilon}/r)}
\left( \frac{G(x_{0},t_{0},w_{k,\varepsilon}/r)}
{\mathcal{G}(x_{0},t_{0},w_{k,\varepsilon}/r)}-1 \right)
\frac{t-t_{0}+\theta}{\theta}\,\zeta^{\,c_{2}}\,dx
\\
&\leqslant \frac{\gamma}{\theta}\int\limits_{B_{r}(x_{0})\times\{t\}}
\Phi_{k}(x_{0},t_{0},v_{\pm})\,\zeta^{\,c_{2}}\,dx+\gamma
\int\limits_{B_{r}(x_{0})\times\{t\}}
\frac{g(x,t,|\nabla(v_{\pm}-k)_{-}|)}{\mathcal{G}(x_{0},t_{0},w_{k,\varepsilon}/r)}
\,\frac{t-t_{0}+\theta}{\theta}\,\zeta^{\,c_{2}-1}\,dx.
\end{aligned}
$$
Condition $({\rm g}_{1})$ implies that
$q^{-1}G(x,t,{\rm w})\leqslant \mathcal{G}(x,t,{\rm w})\leqslant p^{-1}G(x,t,{\rm w})$
for $(x,t)\in\Omega_{T}$ and ${\rm w}>0$, so from the previous by $({\rm g}_{1})$ and
\eqref{eq2.6} we have
\begin{equation}\label{eq2.7}
\begin{aligned}
&\frac{\partial}{\partial t}\int\limits_{B_{r}(x_{0})\times\{t\}}
\Phi_{k}(x_{0},t_{0},v_{\pm})\,\frac{t-t_{0}+\theta}{\theta}\,
\zeta^{\,c_{2}}\,dx
\\
&+(p-1)\int\limits_{B_{r}(x_{0})\times\{t\}}
\frac{G(x,t,|\nabla(v_{\pm}-k)_{-}|)}{\mathcal{G}(x_{0},t_{0},w_{k,\varepsilon}/r)}
\,\frac{t-t_{0}+\theta}{\theta}\,\zeta^{\,c_{2}}\,dx
\\
&\leqslant \frac{\gamma}{\theta}\int\limits_{B_{r}(x_{0})\times\{t\}}
\Phi_{k}(x_{0},t_{0},v_{\pm})\,\zeta^{\,c_{2}}\,dx+
\frac{\gamma}{\sigma^{q}}\int\limits_{B_{r}(x_{0})\times\{t\}}
\frac{g(x,t,w_{k,\varepsilon}/r)}{g(x_{0},t_{0},w_{k,\varepsilon}/r)}\,
\zeta^{\,c_{2}-q}\,dx.
\end{aligned}
\end{equation}

Let us estimate the second term on the left-hand side of \eqref{eq2.7}.
Condition $({\rm g}_{1})$ and inequality \eqref{eq2.6} imply
$$
\begin{aligned}
&\frac{r}{\varepsilon_{1}^{\,p-1}}
\int\limits_{B_{r}(x_{0})\times\{t\}}
\left| \nabla\ln\frac{(1+\varepsilon)k}{w_{k,\varepsilon}} \right|
\frac{g(x,t,w_{k,\varepsilon}/r)}{g(x_{0},t_{0},w_{k,\varepsilon}/r)}\,
\frac{t-t_{0}+\theta}{\theta}\,\zeta^{\,c_{2}}\,dx
\\
&\leqslant \frac{\gamma}{\varepsilon_{1}^{\,p-1}}\int\limits_{B_{r}(x_{0})\times\{t\}}
|\nabla w_{k,\varepsilon}|\,\frac{g(x,t,w_{k,\varepsilon}/r)}
{\mathcal{G}(x_{0},t_{0},w_{k,\varepsilon}/r)}\,
\frac{t-t_{0}+\theta}{\theta}\,\zeta^{\,c_{2}}\,dx
\\
&\leqslant \frac{\gamma}{\varepsilon_{1}^{\,p}}\int\limits_{B_{r}(x_{0})\times\{t\}}
\frac{G(x,t,w_{k,\varepsilon}/r)}{\mathcal{G}(x_{0},t_{0},w_{k,\varepsilon}/r)}\,
\frac{t-t_{0}+\theta}{\theta}\,\zeta^{\,c_{2}}\,dx
\\
&+
\frac{\gamma}{\varepsilon_{1}^{\,p}}\int\limits_{B_{r}(x_{0})\times\{t\}}
\frac{G(x,t,\varepsilon_{1}
|\nabla w_{k,\varepsilon}|)}{\mathcal{G}(x_{0},t_{0},w_{k,\varepsilon}/r)}\,
\frac{t-t_{0}+\theta}{\theta}\,\zeta^{\,c_{2}}\,dx
\\
&\leqslant \frac{\gamma}{\varepsilon_{1}^{\,p}}\int\limits_{B_{r}(x_{0})\times\{t\}}
\frac{g(x,t,w_{k,\varepsilon}/r)}{g(x_{0},t_{0},w_{k,\varepsilon}/r)}\,
\frac{t-t_{0}+\theta}{\theta}\,\zeta^{\,c_{2}}\,dx
\\
&+\gamma\int\limits_{B_{r}(x_{0})\times\{t\}}
\frac{G(x,t,|\nabla w_{k,\varepsilon}|)}{\mathcal{G}(x_{0},t_{0},w_{k,\varepsilon}/r)}\,
\frac{t-t_{0}+\theta}{\theta}\,\zeta^{\,c_{2}}\,dx.
\end{aligned}
$$
From this and \eqref{eq2.7} we arrive at the required \eqref{eq2.5}.

Thus, taking into account the previous arguments, we obtain Theorem \ref{th1.1} as a
consequence of Theorem \ref{th2.1}.


\section{Auxiliary material and integral estimates}\label{Sect3}

\subsection{Auxiliary propositions}\label{subSect3.1}

For every Lebesgue measurable set $E\subset \mathbb{R}^{n}$,
we denote by $|E|$ the $n$-dimensional
Lebesgue measure of $E$ (or the $n+1$-dimensional measure, if $E\subset \mathbb{R}^{n+1}$).
The following two lemmas will be used in the sequel.
The first one is the well-known De\,Giorgi-Poincar\'{e} lemma (see \cite[Chap.\,II,\,Lemma\,3.9]{LadUr}).
\begin{lemma}\label{lem3.1}
Let $u\in W^{1,1}(B_{\rho}(x_{0}))$.
Then, for any $k,l\in \mathbb{R}$, $k<l$, the following
inequalities hold:
$$
(l-k)\,|A^{+}_{l,\rho}|
\leqslant
\frac{c\,\rho^{n}}{|B_{\rho}(x_{0})\setminus A^{+}_{k,\rho}|}
\int\limits_{A^{+}_{k,\rho}\setminus A^{+}_{l,\rho}} |\nabla u|\,dx,
$$
\begin{equation*}
(l-k)\,|A^{-}_{k,\rho}|
\leqslant
\frac{c\,\rho^{n}}{|B_{\rho}(x_{0})\setminus A^{-}_{l,\rho}|}
\int\limits_{A^{-}_{l,\rho}\setminus A^{-}_{k,\rho}} |\nabla u|\,dx,
\end{equation*}
where $A^{+}_{k,\rho}:= B_{\rho}(x_{0})\cap \{u>k\}$, $A^{-}_{k,\rho}:= B_{\rho}(x_{0})\cap \{u<k\}$
and $c$ is a positive constant depending only on $n$.
\end{lemma}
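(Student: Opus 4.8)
The plan is to reproduce the classical De\,Giorgi argument, which reduces the whole statement to a single pointwise Riesz‐potential estimate for a truncation of $u$. First I would note that it suffices to prove the first inequality: applying it to $-u$ with the levels $(k,l)$ replaced by $(-l,-k)$ — note $-l<-k$ and $|\nabla(-u)|=|\nabla u|$ — interchanges the super‐ and sub‐level sets, since $\{-u>-k\}=\{u<k\}$ and $\{-u\leqslant-l\}=\{u\geqslant l\}$, and therefore produces exactly the second inequality for $u$.

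For the first inequality I would introduce the truncation $v:=\min\{(u-k)_{+},\,l-k\}$, which belongs to $W^{1,1}(B_{\rho}(x_{0}))$ and satisfies $0\leqslant v\leqslant l-k$, $v\equiv l-k$ on $A^{+}_{l,\rho}$, $v\equiv 0$ on $\mathcal{A}:=B_{\rho}(x_{0})\setminus A^{+}_{k,\rho}$, and $\nabla v=\nabla u\cdot\mathbf{1}_{A^{+}_{k,\rho}\setminus A^{+}_{l,\rho}}$ a.e. The core of the proof is the pointwise bound
$$
v(x)\leqslant\frac{c(n)\,\rho^{n}}{|\mathcal{A}|}\int\limits_{B_{\rho}(x_{0})}\frac{|\nabla v(z)|}{|x-z|^{n-1}}\,dz,\qquad\text{for a.e. }x\in B_{\rho}(x_{0}),
$$
which I would derive as follows. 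Since a $W^{1,1}$ function is absolutely continuous on almost every segment (the ACL property, obtained via mollification), for a.e. $x$ and a.e. $y\in\mathcal{A}$, writing $\omega:=(x-y)/|x-y|$, one has $v(x)=v(x)-v(y)=-\int_{0}^{|x-y|}\omega\cdot\nabla v(y+r\omega)\,dr$. Averaging this identity over $y\in\mathcal{A}$, taking absolute values, passing to polar coordinates centred at $x$, reversing the order of integration and extending $|\nabla v|$ by zero outside $B_{\rho}(x_{0})$, one arrives at the displayed estimate, the factor $\rho^{n}$ coming from the radial integral $\int_{0}^{2\rho}s^{n-1}\,ds$.

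To finish, I would integrate the pointwise bound over $x\in A^{+}_{l,\rho}$, where $v\equiv l-k$, interchange the order of integration by Fubini, and use the elementary estimate $\int_{A^{+}_{l,\rho}}|x-z|^{1-n}\,dx\leqslant\int_{B_{2\rho}(z)}|x-z|^{1-n}\,dx=c(n)\,\rho$. Substituting $\nabla v=\nabla u\cdot\mathbf{1}_{A^{+}_{k,\rho}\setminus A^{+}_{l,\rho}}$ and recalling $|\mathcal{A}|=|B_{\rho}(x_{0})\setminus A^{+}_{k,\rho}|$, this gives the asserted inequality with a constant $c$ depending only on $n$.

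I do not anticipate a genuine obstacle — this is essentially \cite[Chap.\,II,\,Lemma\,3.9]{LadUr}. The only subtlety is making the segment representation rigorous for a merely $W^{1,1}$ function, which can be handled either by the ACL characterisation directly or by first proving the pointwise bound for smooth functions and passing to the limit in $W^{1,1}(B_{\rho}(x_{0}))$; afterwards, the remaining steps are routine applications of Fubini's theorem and an elementary kernel estimate.
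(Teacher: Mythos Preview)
Your proof is correct and follows the classical De\,Giorgi argument from \cite[Chap.\,II,\,Lemma\,3.9]{LadUr}; the paper itself does not prove this lemma but simply cites that reference, so your proposal is precisely the argument the authors are invoking.
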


The following lemma can also be found in \cite[Chap.~II, Lemma~4.7]{LadUr}.
\begin{lemma}\label{lem3.2}
Let $y_{j}$, $j=0,1,2,\ldots$, be a sequence of non-negative numbers satisfying
$$
y_{j+1}\leqslant c\,b^{j}y_{j}^{1+\delta}, \quad j=0,1,2,\ldots,
$$
with some constants $\delta>0$ and $c,b>1$. Then
$$
y_{j}\leqslant c^{\frac{(1+\delta)^{j}-1}{\delta}}b^{\frac{(1+\delta)^{j}-1}{\delta^{2}}-\frac{j}{\delta}}
y_{0}^{(1+\delta)^{j}}, \quad j=0,1,2,\ldots.
$$
Particularly, if $y_{0}\leqslant \nu:=c^{-\frac{1}{\delta}}b^{-\frac{1}{\delta^{2}}}$, then
$$
y_{j}\leqslant \nu b^{-\frac{j}{\delta}} \quad \text{and} \quad \lim_{j\rightarrow\infty} y_{j}=0.
$$
\end{lemma}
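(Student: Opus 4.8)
The plan is a straightforward induction on $j$ to establish the explicit bound, followed by a direct substitution to obtain the ``particularly'' clause.

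First I would check the base case $j=0$: since $(1+\delta)^{0}-1=0$, both the exponent of $c$ and the exponent of $b$ (the latter being $\frac{(1+\delta)^{0}-1}{\delta^{2}}-\frac{0}{\delta}=0$) vanish, while the exponent of $y_{0}$ equals $(1+\delta)^{0}=1$, so the claimed inequality reads $y_{0}\leqslant y_{0}$ and is trivially true.

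For the inductive step I would assume the bound at level $j$, and insert it into the hypothesis $y_{j+1}\leqslant c\,b^{j}y_{j}^{1+\delta}$, using that all $y_{j}\geqslant0$ and that $s\mapsto s^{1+\delta}$ is nondecreasing on $[0,\infty)$. This yields $y_{j+1}\leqslant c^{\alpha}b^{\beta}y_{0}^{(1+\delta)^{j+1}}$ with $\alpha=1+(1+\delta)\dfrac{(1+\delta)^{j}-1}{\delta}$ and $\beta=j+(1+\delta)\Big(\dfrac{(1+\delta)^{j}-1}{\delta^{2}}-\dfrac{j}{\delta}\Big)$. The only remaining task is the purely algebraic identity $\alpha=\dfrac{(1+\delta)^{j+1}-1}{\delta}$ and $\beta=\dfrac{(1+\delta)^{j+1}-1}{\delta^{2}}-\dfrac{j+1}{\delta}$; for $\beta$ one rewrites $\dfrac{(1+\delta)j}{\delta}=j+\dfrac{j}{\delta}$ and $\dfrac{1+\delta}{\delta^{2}}=\dfrac{1}{\delta^{2}}+\dfrac{1}{\delta}$ and collects terms. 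This is exactly the asserted bound at level $j+1$, which closes the induction.

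For the last assertion, if $y_{0}\leqslant\nu=c^{-1/\delta}b^{-1/\delta^{2}}$ then, since $(1+\delta)^{j}>0$, I would substitute $y_{0}\leqslant\nu$ into the explicit bound and add up the exponents:
\[
y_{j}\leqslant c^{\frac{(1+\delta)^{j}-1}{\delta}-\frac{(1+\delta)^{j}}{\delta}}\,
b^{\frac{(1+\delta)^{j}-1}{\delta^{2}}-\frac{j}{\delta}-\frac{(1+\delta)^{j}}{\delta^{2}}}
=c^{-1/\delta}\,b^{-1/\delta^{2}-j/\delta}=\nu\,b^{-j/\delta},
\]
and then $b>1$ forces $b^{-j/\delta}\to0$, hence $y_{j}\to0$. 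There is no genuine obstacle in this lemma; the only point requiring a little care is the bookkeeping of the exponents in the inductive step (together with the observation that nonnegativity of $y_{j}$ justifies applying the monotone map $s\mapsto s^{1+\delta}$), and everything else is routine arithmetic.
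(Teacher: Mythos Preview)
Your proof is correct. The paper does not supply its own proof of this lemma but merely cites Ladyzhenskaya--Ural'tseva; the standard argument there is precisely the induction you carry out, so your approach coincides with the intended one.
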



\subsection{Local material and energy estimates}\label{subSect3.1}

We fix $(x_{0},t_{0})\in \Omega_{T}$, let $Q_{R_{0}, R_{0}}(x_{0},t_{0})\subset \Omega_{T}$ and
construct the cylinder
$$
Q_{r,\theta}(x_{0},\overline{t})\subset Q_{r,rK_{3}}(x_{0},t_{0})
\subset Q_{R_{0}, R_{0}}(x_{0},t_{0}) \ \ \text{for} \ r<R_{0}^{\overline{\delta}},
\ \overline{\delta}>1+\delta,
$$
where $\delta\geqslant0$ is the number that was defined in
assumptions $(\Psi_{1})$ and $(\Psi_{2})$. We denote by $\mu_{\pm}$ and $\omega$
non-negative numbers such that
$\mu_{+}\geqslant \esssup\limits_{Q_{r,\theta}(x_{0},\overline{t})}u$,
$\mu_{-}\leqslant \essinf\limits_{Q_{r,\theta}(x_{0},\overline{t})}u$,
$\omega=\mu_{+}-\mu_{-}$, and set $v_{+}:=\mu_{+}-u$, $v_{-}:=u-\mu_{-}$.

Further we need the following lemma.
\begin{lemma}\label{lem3.3}
Let $u\in \mathcal{B}_{1,g}(\Omega_{T})$ and let conditions
${\rm (g_{1})}$, ${\rm (g_{2})}$, $(\Psi_{1})$ or $(\Psi_{2})$ be
fulfilled in $Q_{R_{0}, R_{0}}(x_{0},t_{0})$. Assume also that
\begin{equation}\label{eq3.1}
\left|\left\{ x\in B_{r}(x_{0}):v_{\pm}(x,\overline{t}-\theta)
\leqslant \frac{\omega\,\lambda(r)}{2^{s_{0}}} \right\}  \right|
\leqslant (1-\beta_{0})\,|B_{r}(x_{0})|,
\end{equation}
for some $s_{0}\geqslant 1+\log M$ and $\beta_{0}\in(0,1)$. Then there exists
number $s_{1}>s_{0}$ depending only on the data, $s_{0}$ and $\beta_{0}$ such that
either
\begin{equation}\label{eq3.2}
\omega\leqslant 2^{s_{1}}(1+b_{0})\,\frac{r^{1-\frac{\delta}{\overline{\delta}}}}
{\lambda(r)},
\end{equation}
or
\begin{equation}\label{eq3.3}
\left|\left\{ x\in B_{r}(x_{0}):v_{\pm}(x,t)
\leqslant \frac{\omega\,\lambda(r)}{2^{s_{1}}} \right\}  \right|
\leqslant \left(1-\frac{\beta^{2}_{0}}{2}\right)\,|B_{r}(x_{0})|,
\end{equation}
for all $t\in(\overline{t}-\theta,\overline{t})$, provided that
$$
\theta\leqslant
\frac{r^{2}}{\psi\left( x_{0},t_{0}, \dfrac{\omega\,\lambda(r)}{2^{s_{0}}r} \right)}
\quad \text{in the case} \ (\Psi_{1}),
$$
or
$$
\theta\leqslant
\frac{r^{2}}{\psi\left( x_{0},t_{0}, \dfrac{\omega\,\lambda(r)}{2^{s_{1}}r} \right)}
\quad \text{in the case} \ (\Psi_{2}).
$$
\end{lemma}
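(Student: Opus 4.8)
The plan is to expand in the time variable the energy-type estimate \eqref{eq2.3} (and in the singular case, the refined logarithmic estimate \eqref{eq2.5}) in order to propagate the measure-theoretic information on the slice $\{\overline t-\theta\}$ forward to every later time slice $t\in(\overline t-\theta,\overline t)$. The hypothesis \eqref{eq3.1} says that at the initial slice the set where $v_{\pm}$ is ``small'' (below $\omega\lambda(r)/2^{s_0}$) occupies at most a $(1-\beta_0)$ fraction of $B_r(x_0)$; equivalently the set where $v_\pm$ is ``large'' is at least a $\beta_0$-fraction. First I would fix a truncation level $k=k_s:=\omega\lambda(r)/2^{s}$ for $s$ between $s_0$ and the $s_1$ to be chosen, apply \eqref{eq2.3} with this $k$ to the function $v_\pm$ (which, being $\pm(u-\mu_\pm)$, lies in $\mathcal B_{1,g}$ after the standard truncation remark), and with a time-independent cut-off $\chi\equiv1$ and a spatial cut-off $\zeta$ supported in $B_r$ equal to $1$ on $B_{r(1-\sigma)}$.

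The key mechanism is that the right-hand side of \eqref{eq2.3} involves $\iint g\big(x,t,(v_\pm-k)_\pm/r\big)\,(v_\pm-k)_\pm/r\,\zeta^{c_2-q}$ over $A^{\pm}_{k,r,\theta}$, and on that set $(v_\pm-k)_\pm\leqslant\omega\lambda(r)/2^{s}$, so the integrand is controlled — via condition $({\rm g}_1)$ and the pointwise Zhikov comparison $({\rm g}_2)$, legitimately applicable since $v_\pm/r\leqslant\omega\lambda(r)/(2^s r)\leqslant K\lambda(r)/r$ for suitable $K$ comparable to $M$ — by $\psi\big(x_0,t_0,\omega\lambda(r)/(2^s r)\big)\cdot$(a power of $2^{s_0-s}$). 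Multiplying by $\theta$ and invoking the upper bound on $\theta$ stated in the lemma (with $\psi$ evaluated at level $s_0$ in the degenerate case $(\Psi_1)$, where monotonicity of $\psi$ lets one replace the varying argument by the $s_0$-argument, and at level $s_1$ in the singular case $(\Psi_2)$ where $\psi$ is non-increasing), the whole parabolic-dissipation term is bounded by $\gamma\,2^{-\varepsilon_* s}(\omega\lambda(r)/2^{s_0})^2\sigma^{-1}r^n$ for some structural exponent $\varepsilon_*>0$; the $\chi_t$ term vanishes. Thus

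\begin{equation*}
\int_{B_{r(1-\sigma)}\times\{t\}}(v_\pm-k_s)_-^2\,dx
\;\leqslant\;
\int_{B_r\times\{\overline t-\theta\}}(v_\pm-k_s)_-^2\,dx
\;+\;\gamma\,2^{-\varepsilon_* s}\Big(\tfrac{\omega\lambda(r)}{2^{s_0}}\Big)^2\sigma^{-1}r^n .
\end{equation*}

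Estimating the first term on the right by $(k_{s_0})^2$ on the bad set from \eqref{eq3.1}, i.e. by $(1-\beta_0)(\omega\lambda(r)/2^{s_0})^2|B_r|$, and on the left restricting to the set where $v_\pm(\cdot,t)\leqslant k_{s_1}=\omega\lambda(r)/2^{s_1}$ — on which $(v_\pm-k_s)_-\geqslant k_s-k_{s_1}\geqslant\tfrac12 k_s$ once $s_1>s+1$ — gives a lower bound $\tfrac14 k_s^2\,|\{v_\pm(\cdot,t)\leqslant k_{s_1}\}\cap B_{r(1-\sigma)}|$. Choosing $\sigma$ small so that $|B_r\setminus B_{r(1-\sigma)}|\leqslant\tfrac{\beta_0^2}{8}|B_r|$, then taking $s$ (hence eventually $s_1$) large enough that $2^{s-s_0}$ beats $4(1-\beta_0)$ by a definite margin while simultaneously $\gamma\,2^{-\varepsilon_* s}\sigma^{-1}\leqslant\tfrac{\beta_0^2}{8}$, we arrive at $|\{v_\pm(\cdot,t)\leqslant k_{s_1}\}\cap B_r|\leqslant(1-\tfrac{\beta_0^2}{2})|B_r|$, which is exactly \eqref{eq3.3}, uniformly in $t\in(\overline t-\theta,\overline t)$. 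The alternative \eqref{eq3.2} enters because the application of $({\rm g}_2)$ requires $\omega\lambda(r)/2^{s_0}\leqslant 2M$, and the passage from ``$\psi$ monotone past $b_0R_0^{-\delta}$'' to a usable bound on $\psi$ at the relevant argument forces $\omega\lambda(r)/(2^{s_1}r)\gtrsim b_0 r^{-\delta/\overline\delta}$ to fail — if it does not fail, $\omega$ is already as small as the right-hand side of \eqref{eq3.2} permits and there is nothing to prove.

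The main obstacle, and the reason the two cases $(\Psi_1)$, $(\Psi_2)$ require different choices of the time-scale $\theta$, is handling the factor $\psi\big(x,t,(v_\pm-k)_\pm/r\big)$ with a \emph{varying} argument that ranges over $(0,\omega\lambda(r)/2^{s_0}]$: one must replace it by $\psi(x_0,t_0,\cdot)$ at a single fixed argument to match the prescribed bound on $\theta$. In the degenerate case $\psi(x_0,t_0,\cdot)$ is non-decreasing for large arguments, so it is controlled by its value at the \emph{largest} relevant argument $\omega\lambda(r)/(2^{s_0}r)$ — provided that argument exceeds the threshold $b_0R_0^{-\delta}$, which is ensured by working at scale $r<R_0^{\overline\delta}$ with $\overline\delta>1+\delta$ and by the alternative \eqref{eq3.2}; in the singular case it is non-increasing, hence controlled by its value at the \emph{smallest} relevant argument, here $\omega\lambda(r)/(2^{s_1}r)$, which is why the $(\Psi_2)$ time-restriction involves $s_1$ rather than $s_0$. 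Keeping explicit track of the $b_0,\delta$ dependence through these monotonicity comparisons, together with the chain $2^{s_0-s}$ vs. $2^{-\varepsilon_* s}$ bookkeeping in the choice of $s_1$, is the technical heart of the argument; everything else is the standard De\,Giorgi slice-propagation computation.
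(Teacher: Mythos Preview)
Your approach has a genuine gap: you build the argument on the $L^{2}$ energy estimate \eqref{eq2.3}, but that estimate is not strong enough to propagate the slice measure with only a small loss. Concretely, applying \eqref{eq2.3} at level $k_{s}=\omega\lambda(r)/2^{s}$ with $\chi\equiv1$ gives, after your reductions,
\[
\tfrac14 k_{s}^{2}\,\bigl|\{v_{\pm}(\cdot,t)\leqslant k_{s_{1}}\}\cap B_{r(1-\sigma)}\bigr|
\;\leqslant\;
(1-\beta_{0})\,k_{s}^{2}\,|B_{r}|
\;+\;\gamma\sigma^{-1}k_{s}^{2}\,|B_{r}|,
\]
since the initial integral is bounded by $k_{s}^{2}$ on $\{v_{\pm}(\cdot,\overline t-\theta)<k_{s}\}\subset\{v_{\pm}\leqslant k_{s_{0}}\}$ and the parabolic term, after using $(g_{2})$ and the $\theta$-bound, is $\gamma\sigma^{-1}\psi(x_{0},t_{0},k_{s}/r)(k_{s}/r)^{2}r^{n}\theta\leqslant\gamma\sigma^{-1}k_{s}^{2}|B_{r}|$. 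Dividing by $\tfrac14 k_{s}^{2}$ leaves $4(1-\beta_{0})+4\gamma\sigma^{-1}$ on the right; for small $\beta_{0}$ this already exceeds $1$, and the $\sigma^{-1}$ term is not small. There is no factor of $2^{s-s_{0}}$ that ``beats $4(1-\beta_{0})$'': both sides scale identically in $k_{s}$, so varying $s$ buys nothing. If instead you bound the initial term by $k_{s_{0}}^{2}$ as you wrote, the situation is worse, since after dividing you pick up $4^{s-s_{0}}(1-\beta_{0})$.

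The paper's proof avoids this by using the \emph{logarithmic} estimate \eqref{eq2.4} with $k=\mu_{+}-2^{-s_{0}}\omega\lambda(r)$ and $a=2^{-s_{1}}\omega\lambda(r)$. The point is that on $\{v_{\pm}\leqslant k_{s_{1}}\}$ the logarithm contributes roughly $(s_{1}-s_{0})^{2}$, the initial slice contributes $(s_{1}-s_{0})^{2}(1-\beta_{0})|B_{r}|$, while the parabolic $\psi$-term---after $(g_{2})$ and the $(\Psi_{1})$/$(\Psi_{2})$ monotonicity exactly as you outlined---contributes only $\gamma\sigma^{-q}(s_{1}-s_{0})|B_{r}|$, i.e.\ one power of $(s_{1}-s_{0})$ less. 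Dividing by $(s_{1}-s_{0})^{2}$ then gives $(1-\beta_{0})+O\bigl(\sigma^{-q}/(s_{1}-s_{0})\bigr)$ on the right, and now one can first fix $\sigma$ small (controlling the annulus) and then $s_{1}-s_{0}$ large, to land below $1-\beta_{0}^{2}/2$. Your discussion of how $(\Psi_{1})$ versus $(\Psi_{2})$ dictates which endpoint of the $\psi$-argument to freeze, and how the alternative \eqref{eq3.2} enters via the threshold $b_{0}R_{0}^{-\delta}$, is correct and carries over verbatim; the missing piece is that the core inequality must be \eqref{eq2.4}, not \eqref{eq2.3}.
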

\begin{proof}
We will prove inequality \eqref{eq3.3} for $v_{+}$, for $v_{-}$ the proof is completely similar.
For this we use inequality \eqref{eq2.4} with $k=\mu_{+}-2^{-s_{0}}\omega\,\lambda(r)$ and
$a=2^{-s_{1}}\omega\,\lambda(r)$ in the cylinder $Q_{r,\theta}(x_{0},\overline{t})$ and note that
$M_{+}(k,r,\theta)\leqslant 2^{-s_{0}}\omega\,\lambda(r)\leqslant \lambda(r)$. In addition, we
will assume that $M_{+}(k,r,\theta)\geqslant 2^{-s_{0}-1}\omega\,\lambda(r)$, because overwise,
inequality \eqref{eq3.3} is valid for $s_{1}=s_{0}+1$. In terms of $v_{+}$ inequality
\eqref{eq2.4} can be rewritten as
\begin{equation}\label{eq3.4}
\begin{aligned}
I_{1}&=\int\limits_{B_{r(1-\sigma)}(x_{0})\times\{t\}}
\ln_{+}^{2}\frac{M_{+}(k,r,\theta)}
{M_{+}(k,r,\theta)-(v_{+}-2^{-s_{0}}\omega\,\lambda(r))_{-}+2^{-s_{1}}\omega\,\lambda(r)}
\,dx
\\
&\leqslant \int\limits_{B_{r}(x_{0})\times\{\overline{t}-\theta\}}
\ln_{+}^{2}\frac{M_{+}(k,r,\theta)}
{M_{+}(k,r,\theta)-(v_{+}-2^{-s_{0}}\omega\,\lambda(r))_{-}+2^{-s_{1}}\omega\,\lambda(r)}
\,dx
\\
&+\frac{\gamma(s_{1}-s_{0})}{\sigma^{q}}\int\limits_{\overline{t}-\theta}^{t}
\int\limits_{B_{r}(x_{0})}
\psi\left( x,\tau,
\frac{M_{+}(k,r,\theta)-(v_{+}-2^{-s_{0}}\omega\,\lambda(r))_{-}+2^{-s_{1}}\omega\,\lambda(r)}
{r} \right) dxd\tau
\\
&=I_{2}+I_{3}.
\end{aligned}
\end{equation}
By \eqref{eq3.1} we have
\begin{equation}\label{eq3.5}
I_{2}\leqslant (s_{1}-s_{0})^{2}\ln^{2}2\,(1-\beta_{0})\,|B_{r}(x_{0})|.
\end{equation}
Using ${\rm (g_{2})}$, for $(x,\tau)\in B_{r}(x_{0})\times(\overline{t}-\theta, \overline{t})$,
we obtain
\begin{multline*}
g\left( x,\tau,
\frac{M_{+}(k,r,\theta)-(v_{+}-2^{-s_{0}}\omega\,\lambda(r))_{-}+2^{-s_{1}}\omega\,\lambda(r)}
{r} \right)
\\
\leqslant c_{2}\,
g\left( x_{0},t_{0},
\frac{M_{+}(k,r,\theta)-(v_{+}-2^{-s_{0}}\omega\,\lambda(r))_{-}+2^{-s_{1}}\omega\,\lambda(r)}
{r} \right).
\end{multline*}
If condition \eqref{eq3.2} is violated, then by our choices
\begin{equation*}
\frac{M_{+}(k,r,\theta)-(v_{+}-2^{-s_{0}}\omega\,\lambda(r))_{-}+2^{-s_{1}}\omega\,\lambda(r)}
{r}\geqslant \frac{\omega\,\lambda(r)}{2^{s_{1}}r}
\geqslant \frac{1+b_{0}}{r^{\,\delta/\overline{\delta}}}
\geqslant \frac{1+b_{0}}{R_{0}^{\,\delta}}.
\end{equation*}
Therefore, conditions $(\Psi_{1})$ and $(\Psi_{2})$ are applicable.
In the case $(\Psi_{1})$ we have
\begin{multline*}
\psi\left( x,\tau,
\frac{M_{+}(k,r,\theta)-(v_{+}-2^{-s_{0}}\omega\,\lambda(r))_{-}+2^{-s_{1}}\omega\,\lambda(r)}
{r} \right)
\\
\leqslant c_{2}\,\psi\left( x_{0},t_{0},
\frac{M_{+}(k,r,\theta)-(v_{+}-2^{-s_{0}}\omega\,\lambda(r))_{-}+2^{-s_{1}}\omega\,\lambda(r)}
{r} \right)  \hskip 15mm
\\
\leqslant c_{2}\,\psi\left( x_{0},t_{0}, \frac{\omega\,\lambda(r)}{2^{s_{0}-1}r} \right)
\leqslant 2^{q-2}c_{2}\, \psi\left( x_{0},t_{0}, \frac{\omega\,\lambda(r)}{2^{s_{0}}r} \right),
\ \ (x,\tau)\in B_{r}(x_{0})\times(\overline{t}-\theta, t).
\end{multline*}
Similarly, in the case $(\Psi_{2})$ we obtain
\begin{multline*}
\psi\left( x,\tau,
\frac{M_{+}(k,r,\theta)-(v_{+}-2^{-s_{0}}\omega\,\lambda(r))_{-}+2^{-s_{1}}\omega\,\lambda(r)}
{r} \right)
\\
\leqslant c_{2}\,\psi\left( x_{0},t_{0},
\frac{M_{+}(k,r,\theta)-(v_{+}-2^{-s_{0}}\omega\,\lambda(r))_{-}+2^{-s_{1}}\omega\,\lambda(r)}
{r} \right)
\\
\leqslant c_{2}\,\psi\left( x_{0},t_{0}, \frac{\omega\,\lambda(r)}{2^{s_{1}}r} \right),
\ \ (x,\tau)\in B_{r}(x_{0})\times(\overline{t}-\theta, t). \hskip 40mm
\end{multline*}
So, by our choices of $\theta$ we estimate the second term on the right-hand side of \eqref{eq3.4}
as follows:
\begin{equation}\label{eq3.6}
I_{3}\leqslant \gamma\sigma^{-q}(s_{1}-s_{0})\,|B_{r}(x_{0})|.
\end{equation}
Now we estimate from below the integral on the left-hand side of \eqref{eq3.4}, we have
$$
I_{1}\geqslant (s_{1}-s_{0}-2)^{2}\ln^{2}2\,
\left|\left\{ x\in B_{r}(x_{0}):v_{+}(x,t)
\leqslant 2^{-s_{1}}\omega\,\lambda(r) \right\}  \right|.
$$
Collecting \eqref{eq3.4}--\eqref{eq3.6}, we arrive at
\begin{multline*}
\left|\left\{ x\in B_{r}(x_{0}):v_{+}(x,t)
\leqslant 2^{-s_{1}}\omega\,\lambda(r) \right\}  \right|
\\
\leqslant
\left( n\sigma+ \left(\frac{s_{1}-s_{0}}{s_{1}-s_{0}-2}\right)^{2}(1-\beta_{0})
+\frac{\gamma}{\sigma^{q}}\, \frac{s_{1}-s_{0}}{(s_{1}-s_{0}-2)^{2}}  \right)
|B_{r}(x_{0})|, \ \ \text{for all} \ t\in(\overline{t}-\theta, \overline{t}).
\end{multline*}
Choosing $\sigma$ such that $n\sigma\leqslant \beta_{0}^{2}/4$ and then $s_{1}$
such that
$$
\left(\dfrac{s_{1}-s_{0}}{s_{1}-s_{0}-2}\right)^{2}\leqslant 1+\beta_{0},
\quad
\frac{\gamma}{\sigma^{q}}\, \frac{s_{1}-s_{0}}{(s_{1}-s_{0}-2)^{2}}
\leqslant \frac{1}{4}\,\beta_{0}^{2},
$$
we arrive at the required \eqref{eq3.3}, which completes the proof of the lemma.
\end{proof}

The next lemma involves the initial data.
\begin{lemma}\label{lem3.4}
Let $u\in \mathcal{B}_{1,g}(\Omega_{T})$ and let conditions ${\rm (g_{1})}$,
${\rm (g_{2})}$ and ${\rm (\Psi_{1})}$ be fulfilled in $Q_{R_{0}, R_{0}}(x_{0},t_{0})$.
Assume also that
\begin{equation}\label{eq3.7}
v_{\pm}(x, \overline{t}-\theta)\geqslant \frac{\omega\,\lambda(r)}{2^{s_{2}}},
\ \ x\in B_{r}(x_{0}),
\end{equation}
for some $s_{2}\geqslant 1+\log M$. Then for every $\nu\in(0,1)$ there exists number
$s_{3}>s_{2}$ depending only on the data, $s_{2}$, $\nu$, $\theta$, $\omega$ and $r$
such that either
\begin{equation}\label{eq3.8}
\omega\leqslant 2^{s_{3}}(1+b_{0})\,
\frac{r^{1-\frac{\delta}{\overline{\delta}}}}{\lambda(r)},
\end{equation}
or
\begin{equation}\label{eq3.9}
\left|\left\{ x\in B_{r/2}(x_{0}):v_{\pm}(x,t)
\leqslant \frac{\omega\,\lambda(r)}{2^{s_{3}}} \right\}  \right|
\leqslant \nu\,|B_{r/2}(x_{0})|,
\end{equation}
for all $t\in(\overline{t}-\theta,\overline{t})$.
\end{lemma}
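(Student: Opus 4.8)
The plan is to follow the pattern of the proof of Lemma~\ref{lem3.3}, the decisive new feature being that hypothesis \eqref{eq3.7} forces the initial term of the logarithmic estimate \eqref{eq2.4} to vanish; this is exactly what allows the measure of the ''bad'' set to be driven below any prescribed $\nu$, rather than merely decreased by a fixed proportion as in Lemma~\ref{lem3.3}. I treat $v_{+}$ only, the argument for $v_{-}$ being identical. Put $\kappa_{0}:=\omega\lambda(r)/2^{s_{2}}$ and $a:=\omega\lambda(r)/2^{s_{3}}$, where $s_{3}>s_{2}$ will be fixed at the end. If \eqref{eq3.8} holds there is nothing to prove, so assume it fails; since $r<R_{0}^{\overline{\delta}}$, this yields $a/r>(1+b_{0})\,r^{-\delta/\overline{\delta}}\geqslant(1+b_{0})R_{0}^{-\delta}>b_{0}R_{0}^{-\delta}$, so that the monotonicity in $(\Psi_{1})$ will be applicable at every argument of $\psi(x_{0},t_{0},\cdot)$ appearing below. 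One may also assume $H:=\esssup_{Q_{r,\theta}(x_{0},\overline{t})}(\kappa_{0}-v_{+})_{+}\geqslant\kappa_{0}/2$, since otherwise $v_{+}\geqslant\kappa_{0}/2>a$ a.e.\ in $Q_{r,\theta}(x_{0},\overline{t})$ and \eqref{eq3.9} is immediate; note $0<a<H\leqslant\kappa_{0}$.

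Next I apply \eqref{eq2.4} in $Q_{r,\theta}(x_{0},\overline{t})$ with the ''$+$'' truncation at level $k=\mu_{+}-\kappa_{0}$, so that $(u-k)_{+}=(\kappa_{0}-v_{+})_{+}$ and $M_{+}(k,r,\theta)=H$, with the parameter $a$ above, and with $\zeta\in C_{0}^{\infty}(B_{r}(x_{0}))$, $\zeta\equiv1$ on $B_{r/2}(x_{0})$, $|\nabla\zeta|\leqslant\gamma/r$. By \eqref{eq3.7}, $v_{+}(\cdot,\overline{t}-\theta)\geqslant\kappa_{0}$ on $B_{r}(x_{0})$, hence $(\kappa_{0}-v_{+})_{+}(\cdot,\overline{t}-\theta)\equiv0$ on the support of $\zeta$ and the first term on the right of \eqref{eq2.4} is zero. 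In the remaining integral every argument of $\psi(x,t,\cdot)$ lies between $a/r$ and $(H+a)/r<2\kappa_{0}/r\leqslant2M\lambda(r)/r$; using $({\rm g}_{2})$ to pass from $(x,t)$ to $(x_{0},t_{0})$, then the monotonicity from $(\Psi_{1})$ together with $({\rm g}_{1})$, that integrand is at most $\gamma\,\psi(x_{0},t_{0},\kappa_{0}/r)$. Since also $\ln\frac{H}{a}\leqslant\ln\frac{\kappa_{0}}{a}=(s_{3}-s_{2})\ln2$, we obtain, for every $t\in(\overline{t}-\theta,\overline{t})$,
\[
\int\limits_{B_{r/2}(x_{0})\times\{t\}}\ln_{+}^{2}\frac{H}{H-(\kappa_{0}-v_{+})_{+}+a}\,dx
\leqslant\gamma\,(s_{3}-s_{2})\,\frac{\theta}{r^{2}}\,\psi\left(x_{0},t_{0},\frac{\kappa_{0}}{r}\right)|B_{r}(x_{0})|.
\]

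To finish, I bound the left-hand side from below on the set $\{x\in B_{r/2}(x_{0}):v_{+}(x,t)<a\}$: there $(\kappa_{0}-v_{+})_{+}>\kappa_{0}-a$, hence $H-(\kappa_{0}-v_{+})_{+}+a<2a$, and since $H\geqslant\kappa_{0}/2$ the integrand is at least $\ln^{2}\frac{\kappa_{0}}{4a}=(s_{3}-s_{2}-2)^{2}\ln^{2}2$. Combining this with the previous display and with $|B_{r}(x_{0})|=2^{n}|B_{r/2}(x_{0})|$ gives
\[
\left|\left\{x\in B_{r/2}(x_{0}):v_{+}(x,t)<\frac{\omega\lambda(r)}{2^{s_{3}}}\right\}\right|
\leqslant\gamma\,2^{n}\,\frac{s_{3}-s_{2}}{(s_{3}-s_{2}-2)^{2}}\,\frac{\theta}{r^{2}}\,\psi\left(x_{0},t_{0},\frac{\omega\lambda(r)}{2^{s_{2}}r}\right)|B_{r/2}(x_{0})|
\]
for all $t\in(\overline{t}-\theta,\overline{t})$. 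As $(s_{3}-s_{2})/(s_{3}-s_{2}-2)^{2}\to0$, it remains only to choose $s_{3}>s_{2}$, depending on the data, $s_{2}$, $\nu$, $\theta$, $\omega$ and $r$, so large that the constant on the right is $\leqslant\nu$; this yields \eqref{eq3.9} and completes the proof.

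The only step that needs genuine care is the estimate of the $\psi$-integral in \eqref{eq2.4}: one must check that the failure of \eqref{eq3.8} keeps all relevant arguments above $b_{0}R_{0}^{-\delta}$, so that the monotonicity in $(\Psi_{1})$ is available, while simultaneously these arguments stay of order $M\lambda(r)/r$, so that the generalized Zhikov condition $({\rm g}_{2})$ (in the form with $K\sim M$) transfers the $(x,t)$-dependence to the fixed point $(x_{0},t_{0})$. Everything else is the iteration-free logarithmic computation already carried out in Lemma~\ref{lem3.3}, with the one simplification that \eqref{eq3.7} annihilates the initial term and thereby removes the lower bound on how small the bad set can be forced to be.
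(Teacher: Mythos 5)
Your proof is correct and takes essentially the same route as the paper's: the paper itself merely says ``the proof is similar to that of Lemma~\ref{lem3.3}'' and records the resulting one-line estimate, while you have filled in exactly the intended details — applying \eqref{eq2.4} with $k=\mu_{+}-2^{-s_{2}}\omega\lambda(r)$ and $a=2^{-s_{3}}\omega\lambda(r)$, noting that \eqref{eq3.7} annihilates the initial term, controlling the $\psi$-integrand via $({\rm g}_{2})$, $(\Psi_{1})$ and $({\rm g}_{1})$ once the failure of \eqref{eq3.8} places the arguments above $b_{0}R_{0}^{-\delta}$, and then choosing $s_{3}$ large. The only cosmetic remark is that the level set in \eqref{eq3.9} is defined with $\leqslant$; your pointwise lower bound on the integrand goes through unchanged under $v_{+}\leqslant a$, so this is a matter of notation, not substance.
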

\begin{proof}
The proof is similar to that of Lemma \ref{lem3.3}. By \eqref{eq3.7} the first term on the
right-hand side of \eqref{eq3.4} is equal to zero. So, repeating the same arguments as in the
previous proof we obtain
$$
\left|\left\{ x\in B_{r/2}(x_{0}):v_{\pm}(x,t)
\leqslant \frac{\omega\,\lambda(r)}{2^{s_{2}}} \right\}  \right|
\leqslant \frac{\gamma(s_{3}-s_{2})}{(s_{3}-s_{2}-2)^{2}}\,
\frac{\theta}{r^{2}}\,\psi\left(x_{0},t_{0}, \frac{\omega\,\lambda(r)}{2^{s_{0}}}  \right).
$$
Choosing $s_{3}$ from the condition
$$
\frac{\gamma(s_{3}-s_{2})}{(s_{3}-s_{2}-2)^{2}}\,
\frac{\theta}{r^{2}}\,\psi\left(x_{0},t_{0}, \frac{\omega\,\lambda(r)}{2^{s_{2}}} \right)
\leqslant \nu,
$$
we arrive at the required \eqref{eq3.9}, which proves the lemma.
\end{proof}


\subsection{De\,Giorgi type lemmas}\label{subSect3.3}

The next two lemmas will be used in the sequel, they are consequence of the Sobolev embedding
theorem and inequalities \eqref{eq2.1}--\eqref{eq2.3}.
\begin{lemma}\label{lem3.5}
Let $u\in \mathcal{B}_{1,g}(\Omega_{T})$ and let conditions ${\rm (g_{1})}$,
${\rm (g_{2})}$, ${\rm (\Psi_{1})}$ or ${\rm (\Psi_{2})}$ be fulfilled in
$Q_{R_{0}, R_{0}}(x_{0},t_{0})$. Fix $\xi\in (0, \frac{1}{2M})$, then there exists
$\nu\in(0,1)$ depending only on the data, $\xi$, $\omega$, $r$ and $\theta$ such that if
\begin{equation}\label{eq3.10}
\left| \left\{ (x,t)\in Q_{r,\theta}(x_{0},\overline{t}):
v_{\pm}(x,t)\leqslant \xi\,\omega\,\lambda(r)  \right\} \right|
\leqslant\nu\, |Q_{r,\theta}(x_{0},\overline{t})|,
\end{equation}
then either
\begin{equation}\label{eq3.11}
\xi\,\omega\leqslant4(1+b_{0})\,\frac{r^{1-\frac{\delta}{\overline{\delta}}}}{\lambda(r)},
\end{equation}
or
\begin{equation}\label{eq3.12}
v_{\pm}(x,t)\geqslant \frac{\xi\,\omega\,\lambda(r)}{4}
\quad \text{for a.a.} \ (x,t)\in Q_{r/2,\theta/2}(x_{0},\overline{t}).
\end{equation}
\end{lemma}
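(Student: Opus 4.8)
\textbf{Plan for the proof of Lemma \ref{lem3.5}.}
The plan is to run a standard De\,Giorgi iteration on the sub-level sets of $v_{\pm}$, using the energy inequalities \eqref{eq2.1}--\eqref{eq2.3} from the definition of the class $\mathcal{B}_{1,g}$, combined with the Sobolev embedding theorem and a parabolic interpolation. Set $\xi\omega\lambda(r)=:\omega_{\xi}$, fix the decreasing sequence of levels $k_{j}:=\mu_{+}-\omega_{\xi}\big(\tfrac14+\tfrac{1}{2^{j+2}}\big)$ (and symmetrically for $v_{-}$), the shrinking radii $r_{j}:=\tfrac r2(1+2^{-j})$, and the corresponding cylinders $Q_{j}:=Q_{r_{j},\theta_{j}}(x_{0},\overline t)$ with $\theta_{j}:=\tfrac\theta2(1+2^{-j})$; introduce the cutoff functions $\zeta_{j}$ in space and $\chi_{j}$ in time adapted to the pair $(Q_{j},Q_{j+1})$ with $|\nabla\zeta_{j}|\lesssim 2^{j}/r$, $|\partial_{t}\chi_{j}|\lesssim 2^{j}/\theta$, and $\sigma_{j}\sim 2^{-j}$. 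Let $y_{j}:=|A^{-}_{k_{j},r_{j},\theta_{j}}|/|Q_{j}|$ be the normalized measure of the bad set for $v_{+}$ at level $k_{j}$. The goal is to derive a recursive inequality $y_{j+1}\le C\,b^{j}y_{j}^{1+\varkappa}$ with $\varkappa>0$ and then invoke Lemma \ref{lem3.2}, so that $y_{0}\le\nu$ forces $y_{j}\to0$, which is precisely \eqref{eq3.12}.

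The key steps, in order, are as follows. First, apply the energy inequality \eqref{eq2.1} (for $v_{+}$, i.e.\ with $u-k$ replaced by $v_{+}-k$) together with the sup-of-$L^{2}$ bound \eqref{eq2.3} on levels $k_{j}$, $k_{j+1}$, using the Young-type inequality \eqref{eq2.6} to absorb the gradient term; the parameters $\varepsilon$ in \eqref{eq2.1} is to be chosen of order $\omega_{\xi}/(\omega\lambda(r))$-scale, i.e.\ fixed depending only on the data, to balance the two families of terms. Second, use conditions $({\rm g}_{1})$ and $({\rm g}_{2})$ to replace $g(x,t,\cdot)$ by $g(x_{0},t_{0},\cdot)$ up to the constant $c_{1}(2M)$, which requires checking that the relevant arguments $v_{+}/r$, $M_{+}(k_{j},r_{j},\theta_{j})/r$ do not exceed $K\lambda(r)$ with $K\le 2M$ — this is where the dichotomy enters: if \eqref{eq3.11} fails then all these arguments lie in the admissible range, and moreover in the singular case $(\Psi_{2})$ (resp.\ degenerate case $(\Psi_{1})$) the monotonicity of $\psi(x_{0},t_{0},\cdot)$ lets one bound $g\big(x_{0},t_{0},(v_{+}-k)_{-}/r\big)(v_{+}-k)_{-}/r$ from above by $\psi\big(x_{0},t_{0},\omega_{\xi}/r\big)\,\omega_{\xi}^{2}/r^{2}$ times a constant, turning all the $g$-terms into a common factor proportional to $r^{-2}\theta^{-1}\cdot(\text{measure})$ after dividing by the natural scaling quantity. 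Third, estimate the resulting $L^{2}$-energy-type quantity on the left by the Sobolev–Gagliardo–Nirenberg inequality in the parabolic form: $\iint_{Q_{j+1}}(v_{+}-k_{j+1})_{-}^{2}\lesssim \big(\sup_{t}\int(v_{+}-k_{j})_{-}^{2}\zeta_{j}^{c_{2}}\big)^{2/n}\cdot\iint|\nabla[(v_{+}-k_{j})_{-}\zeta_{j}^{c_{2}/2}]|^{?}$, but since the gradient appears only linearly in $\mathcal{B}_{1,g}$ classes one instead uses the De\,Giorgi–Poincaré Lemma \ref{lem3.1} slice-wise in $x$ to pass from $|\nabla v_{+}|$ to a measure gain: on each time slice, $(k_{j}-k_{j+1})|A^{-}_{k_{j+1},r_{j+1}}|\lesssim \frac{r^{n}}{|B_{r_{j+1}}\setminus A^{-}_{k_{j},r_{j+1}}|}\int_{A^{-}_{k_{j},r_{j+1}}\setminus A^{-}_{k_{j+1},r_{j+1}}}|\nabla v_{+}|\,dx$, where the lower bound on $|B\setminus A^{-}_{k_{j}}|$ comes from \eqref{eq3.12}'s counterpart — actually from the fact that at earlier levels the bad set is already small, or more precisely from assumption \eqref{eq3.10} which gives $y_{0}$ small and hence the denominator comparable to $|B_{r}|$. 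Integrating in $t$ and using Hölder together with the left side of the energy inequality to control $\iint|\nabla v_{+}|\,g(x_{0},t_{0},\omega_{\xi}/r)$, one arrives at the desired geometric recursion.

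The main obstacle is the fact that the class $\mathcal{B}_{1,g}$ controls only $\int|\nabla u|$ (the $L^{1}$-gradient weighted by $g$), not $\int|\nabla u|^{p}$, so the usual Sobolev embedding used in $L^{p}$-De\,Giorgi classes is unavailable; the remedy, and the technical heart of the argument, is the combined use of the De\,Giorgi–Poincaré inequality (Lemma \ref{lem3.1}) to convert the $L^{1}$-gradient bound into a measure-shrinking estimate across dyadic levels, and the energy inequality to control that $L^{1}$-gradient by the right-hand terms — the bookkeeping of how the factors $g(x_{0},t_{0},\omega_{\xi}/r)$, $\psi(x_{0},t_{0},\omega_{\xi}/r)$, $r$, $\theta$, $\sigma_{j}$ and $\varepsilon$ combine, and in particular verifying that one can choose $\nu$ (and the geometric constants) depending only on the data, $\xi$, $\omega$, $r$, $\theta$, requires care, especially keeping explicit track of the $b_{0},\delta$ dependence as flagged after the definition of the data. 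I would handle the degenerate and singular cases in parallel, noting that $(\Psi_{1})$ versus $(\Psi_{2})$ only changes whether one evaluates the monotone function $\psi(x_{0},t_{0},\cdot)$ at the larger or the smaller endpoint of the range $[\omega_{\xi}/(2r),\,\omega_{\xi}/r]$, exactly as in Lemmas \ref{lem3.3} and \ref{lem3.4}, and this discrepancy is absorbed into the constant $\gamma$ via $({\rm g}_{1})$ (a factor $2^{q-2}$).
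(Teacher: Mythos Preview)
Your setup (dyadic levels $k_j$, shrinking cylinders $Q_j$, cutoffs $\zeta_j,\chi_j$, and the goal $y_{j+1}\le Cb^{j}y_{j}^{1+\varkappa}$ followed by Lemma~\ref{lem3.2}) matches the paper's. The gap is in what you call ``the main obstacle'': you claim that because \eqref{eq2.1} controls only $\int g\,|\nabla u|$, the Sobolev embedding is unavailable and must be replaced by the slice-wise De\,Giorgi--Poincar\'e lemma. This is wrong. The $W^{1,1}\hookrightarrow L^{n/(n-1)}$ embedding is available, and combined with the $\sup_t L^{2}$ bound from \eqref{eq2.3} via H\"older (namely $\int_{x} w^{(n+2)/n}\le(\int_{x} w^{2})^{1/n}\|w\|_{L^{n/(n-1)}_{x}}\le C(\int_{x} w^{2})^{1/n}\int_{x}|\nabla w|$) it yields precisely the parabolic inequality the paper uses:
\[
\iint w\,\chi_{j}\;\le\;\gamma\,|A_{j,k_{j}}|^{\frac{2}{n+2}}
\Big(\sup_{t}\!\int (u-k_{j})_{+}^{2}\zeta_{j}^{c_{2}}\chi_{j}\,dx\Big)^{\frac{1}{n+2}}
\Big(\iint|\nabla w|\,\chi_{j}\,dxdt\Big)^{\frac{n}{n+2}},
\]
with $w=(\min\{u,k_{j+1}\}-k_{j})_{+}\zeta_{j}^{c_{2}}$. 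Inserting the energy estimates (the paper simply takes $\varepsilon=1$, not a tuned scale) gives $y_{j+1}\le\gamma\,b^{j}\,y_{j}^{1+\frac{1}{n+2}}$ directly, with $\nu$ given explicitly by \eqref{eq3.15}.

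Your proposed substitute has two defects. First, Lemma~\ref{lem3.1} requires a \emph{slice-wise} lower bound on $|B_{r_{j}}\setminus A_{k_{j},r_{j}}(t)|$ for every $t$, whereas hypothesis~\eqref{eq3.10} is only a \emph{space-time} smallness; your assertion that ``$y_{0}$ small makes the denominator comparable to $|B_{r}|$'' does not follow. Second, even granting such a bound, the De\,Giorgi--Poincar\'e route yields at best $y_{j+1}^{\,p/(p-1)}\lesssim y_{j}-y_{j+1}$ (compare the proof of Lemma~\ref{lem4.1}), which feeds a \emph{telescoping} argument over many levels---not the super-linear recursion $y_{j+1}\lesssim y_{j}^{1+\varkappa}$ needed here. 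Lemma~\ref{lem3.5} and Lemma~\ref{lem4.1} use genuinely different mechanisms (small initial measure $\Rightarrow$ pointwise bound, versus slice-wise complement bound $\Rightarrow$ small measure after many levels), and your plan conflates them. As a minor point, $(\Psi_{1})/(\Psi_{2})$ are not actually invoked in the paper's proof of this lemma; only $({\rm g}_{1})$, $({\rm g}_{2})$, the negation of \eqref{eq3.11}, and the harmless reduction \eqref{eq3.14} are used.
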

\begin{proof}
Further we will suppose that
\begin{equation}\label{eq3.13}
\xi\,\omega\geqslant4(1+b_{0})\,\frac{r^{1-\frac{\delta}{\overline{\delta}}}}{\lambda(r)}.
\end{equation}

For $j=0,1,2, \ldots,$ we define the sequences
$$
r_{j}:=\dfrac{r}{2}(1+2^{-j}), \quad \theta_{j}:=\dfrac{\theta}{2}(1+2^{-j}), \quad
\overline{r}_{j}:=\dfrac{r_{j}+r_{j+1}}{2}, \quad
\overline{\theta}_{j}:=\dfrac{\theta_{j}+\theta_{j+1}}{2},
$$
$$
B_{j}:=B_{r_{j}}(x_{0}), \quad \overline{B}_{j}:=B_{\overline{r}_{j}}(x_{0}),
\quad Q_{j}:=Q_{r_{j}, \theta_{j}}(x_{0},\overline{t}), \quad
\overline{Q}_{j}:=Q_{\overline{r}_{j}, \overline{\theta}_{j}}(x_{0},\overline{t}),
$$
$$
k_{j}:=\mu_{+}-\frac{\xi\,\omega\,\lambda(r)}{2}-\frac{\xi\,\omega\,\lambda(r)}{2^{j+1}},
\quad A_{j,k_{j}}:= Q_{j}\cap \{u>k_{j}\}, \quad
\overline{A}_{j,k_{j}}:= \overline{Q}_{j}\cap \{u>k_{j}\}.
$$
Let $\zeta_{j}\in C_{0}^{\infty}(\overline{B}_{j})$, $0\leqslant\zeta_{j}\leqslant1$,
$\zeta_{j}=1$ in $B_{j+1}$ and $|\nabla \zeta_{j}|\leqslant \gamma2^{j}/r$.
Consider also the function $\chi_{j}(t)=1$ for $t\geqslant \overline{t}-\theta_{j+1}$,
$\chi_{j}(t)=0$ for $t< \overline{t}-\theta_{j}$, $0\leqslant\chi_{j}(t)\leqslant1$
and $|\chi'_{j}|\leqslant \gamma2^{j}/\theta$.

We will assume that
\begin{equation}\label{eq3.14}
\esssup\limits_{Q_{r/2, \theta/2}(x_{0},\overline{t})}(u-k_{\infty})_{+}
\geqslant \frac{\xi\,\omega\,\lambda(r)}{4},
\end{equation}
because in the opposite case, inequality \eqref{eq3.12} is evident.
Using the fact that $$\esssup\limits_{Q_{j}}(u-k_{j})_{+}\leqslant\lambda(r),$$
by ${\rm (g_{1})}$ and ${\rm (g_{2})}$ we obtain for $(x,t)\in Q_{j}$
$$
\begin{aligned}
&g\left( x,t, \esssup\limits_{Q_{j}}\frac{(u-k_{j})_{+}}{r} \right)
\leqslant g\left( x,t, \frac{\xi\,\omega\,\lambda(r)}{r} \right)
\\
&\leqslant c_{2}\,g\left( x_{0},t_{0}, \frac{\xi\,\omega\,\lambda(r)}{r} \right)
\leqslant c_{2}\,4^{q-1}\,g\left( x_{0},t_{0}, \frac{\xi\,\omega\,\lambda(r)}{4r} \right)
\\
&\leqslant c^{2}_{2}\,4^{q-1}\,g\left( x,t, \frac{\xi\,\omega\,\lambda(r)}{4r} \right)
\leqslant c^{2}_{2}\,4^{q-1}\,g\left( x,t, \esssup\limits_{Q_{j}}\frac{(u-k_{j})_{+}}{r} \right).
\end{aligned}
$$
By this, inequalities \eqref{eq2.1}, \eqref{eq2.3} with $\varepsilon=1$ can be rewritten in the
form
$$
\iint\limits_{\overline{A}_{j,k_{j}}}
|\nabla (\min\{u,k_{j+1}\}-k_{j})_{+}|\,\zeta_{j}^{\,c_{2}}\chi_{j}\,dxdt
\leqslant
\gamma2^{j\gamma}\,\frac{\xi\,\omega\,\lambda(r)}{r}
\left( 1+\frac{r^{2}}
{\theta\psi\left( x_{0},t_{0}, \frac{\xi\,\omega\,\lambda(r)}{r} \right)} \right)
|A_{j,k_{j}}|,
$$
\begin{multline*}
\sup\limits_{\overline{t}-\overline{\theta}_{j}<t<\overline{t}}\,
\int\limits_{\overline{B}_{j}\times\{t\}} (u-k_{j})_{+}^{2}\,\zeta_{j}^{\,c_{2}}\chi_{j}\,dx
\\
\leqslant
\gamma2^{j\gamma}\left(\frac{\xi\,\omega\,\lambda(r)}{r}\right)^{2}
\psi\left( x_{0},t_{0}, \frac{\xi\,\omega\,\lambda(r)}{r} \right)
\left( 1+\frac{r^{2}}
{\theta\psi\left( x_{0},t_{0}, \frac{\xi\,\omega\,\lambda(r)}{r} \right)} \right)
|A_{j,k_{j}}|.
\end{multline*}
By the Sobolev embedding theorem from the last two inequalities we obtain
$$
\begin{aligned}
(k_{j+1}-k_{j})\,&|A_{j+1,k_{j+1}}|
\leqslant
\iint\limits_{\overline{A}_{j,k_{j}}}
 (\min\{u,k_{j+1}\}-k_{j})_{+}\,\zeta_{j}^{\,c_{2}}\chi_{j}\,dxdt
\\
&\leqslant \gamma |A_{j,k_{j}}|^{\frac{2}{n+2}}
\left( \sup\limits_{\overline{t}-\overline{\theta}_{j}<t<\overline{t}}\,
\int\limits_{\overline{B}_{j}\times\{t\}} (u-k_{j})_{+}^{2}\,\zeta_{j}^{\,c_{2}}\chi_{j}\,dx
 \right)^{\frac{1}{n+2}}
\\
&\times
\left(\, \iint\limits_{\overline{A}_{j,k_{j}}}
\left|\nabla \left[(\min\{u,k_{j+1}\}-k_{j})_{+}\,\zeta_{j}^{\,c_{2}}\right]\right|
\chi_{j}\,dxdt \right)^{\frac{n}{n+2}}
\\
&\leqslant \gamma2^{j\gamma}\, \frac{\xi\,\omega\,\lambda(r)}{r}\,
\left[\psi\left( x_{0},t_{0}, \frac{\xi\,\omega\,\lambda(r)}{r} \right)\right]^{\frac{1}{n+2}}
\left( 1+\frac{r^{2}}
{\theta\,\psi\left( x_{0},t_{0}, \frac{\xi\,\omega\,\lambda(r)}{r} \right)} \right)
^{\frac{n+1}{n+2}},
\end{aligned}
$$
which implies
$$
y_{j+1}:=\frac{|A_{j+1,k_{j+1}}|}{|Q_{r,\theta}(x_{0},\overline{t})|}\leqslant
\left( \frac{\theta\,\psi\left( x_{0},t_{0}, \frac{\xi\,\omega\,\lambda(r)}{r} \right)}
{r^{2}} \right)^{\frac{1}{n+2}}
\left( 1+\frac{r^{2}}
{\theta\,\psi\left( x_{0},t_{0}, \frac{\xi\,\omega\,\lambda(r)}{r} \right)} \right)
^{\frac{n+1}{n+2}}y_{j}^{1+\frac{1}{n+2}}.
$$
From this by Lemma \ref{lem3.2} we obtain that
$\lim\limits_{j\rightarrow\infty}y_{j}=0$, provided that $y_{0}\leqslant\nu$, where $\nu$ is
chosen to satisfy
\begin{equation}\label{eq3.15}
\nu=\frac{\gamma^{-1}r^{2}}
{\theta\,\psi\left( x_{0},t_{0}, \frac{\xi\,\omega\,\lambda(r)}{r} \right)}
\left( 1+\frac{r^{2}}
{\theta\,\psi\left( x_{0},t_{0}, \frac{\xi\,\omega\,\lambda(r)}{r} \right)} \right)
^{-n-1},
\end{equation}
which proves the lemma.
\end{proof}
\begin{lemma}[De\,Giorgi type lemma involving initial data]\label{lem3.6}
Let $u\in \mathcal{B}_{1,g}(\Omega_{T})$ and let conditions ${\rm (g_{1})}$,
${\rm (g_{2})}$, ${\rm (\Psi_{1})}$ or ${\rm (\Psi_{2})}$ be fulfilled in
$Q_{R_{0},R_{0}}(x_{0},t_{0})$. Fix $\xi\in(0,\frac{1}{2M})$, the there exists
$\nu_{1}\in(0,1)$ depending only on the data, $\xi$, $\omega$, $r$ and $\theta$
such that if
\begin{equation}\label{eq3.16}
v_{\pm}(x,\overline{t}-\theta)\geqslant \xi\,\omega\,\lambda(r)
\quad \text{for } \ x\in B_{r}(x_{0}),
\end{equation}
and
\begin{equation}\label{eq3.17}
|\{(x,t)\in Q_{r,\theta}(x_{0},\overline{t}):
v_{\pm}(x,t)\leqslant \xi\,\omega\,\lambda(r)\}|
\leqslant \nu_{1}|Q_{r,\theta}(x_{0},\overline{t})|,
\end{equation}
then either \eqref{eq3.11} holds, or
\begin{equation}\label{eq3.18}
v_{\pm}(x,t)\geqslant \frac{\xi\,\omega\,\lambda(r)}{4}
\quad \text{for a.a.} \ (x,t)\in Q_{r/2,\theta/2}(x_{0},\overline{t}),
\end{equation}
provided that
$\theta\leqslant r^{2}/\psi\left( x_{0},t_{0}, \dfrac{\xi\,\omega\,\lambda(r)}{r} \right)$.
\end{lemma}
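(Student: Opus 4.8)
Lemma \ref{lem3.6} is the De\,Giorgi type lemma involving the initial data, so the natural plan is to follow the scheme of Lemma \ref{lem3.5} but exploit hypothesis \eqref{eq3.16} to kill the boundary term arising at the bottom slice $t=\overline{t}-\theta$. First I would set up exactly the same nested geometric sequences as in the proof of Lemma \ref{lem3.5}: radii $r_{j}=\tfrac{r}{2}(1+2^{-j})$, heights $\theta_{j}=\tfrac{\theta}{2}(1+2^{-j})$, intermediate values $\overline{r}_{j},\overline{\theta}_{j}$, cylinders $Q_{j},\overline{Q}_{j}$, the levels $k_{j}=\mu_{+}-\tfrac{\xi\,\omega\,\lambda(r)}{2}-\tfrac{\xi\,\omega\,\lambda(r)}{2^{j+1}}$, together with the cut-off functions $\zeta_{j}\in C_{0}^{\infty}(\overline{B}_{j})$ and time cut-offs $\chi_{j}$, where now I would arrange $\chi_{j}(\overline{t}-\theta)$ so that the initial slice is genuinely picked up (rather than cut away). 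Under the assumption \eqref{eq3.13} (i.e. that \eqref{eq3.11} fails), which I would again impose at the outset, all the arguments $\tfrac{(u-k_{j})_{+}}{r}$, $\tfrac{\xi\omega\lambda(r)}{4r}$ lie above the threshold $b_{0}R_{0}^{-\delta}$, so conditions $({\rm g}_{1})$, $({\rm g}_{2})$, $(\Psi_{1})$ or $(\Psi_{2})$ apply and I can transfer $g(x,t,\cdot)$ and $\psi(x,t,\cdot)$ to the fixed point $(x_{0},t_{0})$ up to a constant depending only on the data, exactly as in the previous proof.

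The key point where \eqref{eq3.16} enters is the energy inequality \eqref{eq2.1} (with $\varepsilon=1$) and the $L^{2}$-estimate \eqref{eq2.3}: for $v_{+}$ the relevant truncation is $(u-k_{j})_{+}=(\mu_{+}-v_{+}-k_{j})_{+}$, which vanishes at $t=\overline{t}-\theta$ precisely because $v_{+}(x,\overline{t}-\theta)\geqslant \xi\,\omega\,\lambda(r)\geqslant \tfrac{\xi\,\omega\,\lambda(r)}{2}+\tfrac{\xi\,\omega\,\lambda(r)}{2^{j+1}}$ on all of $B_{r}(x_{0})$, hence $(u-k_{j})_{+}(x,\overline{t}-\theta)=0$. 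Therefore the boundary term $\int_{B_{r}\times\{\overline{t}-\theta\}}(u-k_{j})_{+}^{2}\zeta_{j}^{c_{2}}\,dx$ drops out, and — differently from Lemma \ref{lem3.5} — the accumulating factor multiplying $|A_{j,k_{j}}|$ is $\gamma2^{j\gamma}\tfrac{\xi\omega\lambda(r)}{r}$ times only the "interior" contribution, controlled via $\theta\leqslant r^{2}/\psi\bigl(x_{0},t_{0},\tfrac{\xi\omega\lambda(r)}{r}\bigr)$, which makes the bracketed quantity $\bigl(1+\tfrac{r^{2}}{\theta\psi(\cdots)}\bigr)$ bounded by a data constant. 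I would then run the standard Sobolev-embedding interpolation (the parabolic $\tfrac{2}{n+2}$–$\tfrac{n}{n+2}$ splitting of the $L^{2(n+2)/n}$ norm against the sup-in-time $L^{2}$ bound and the spatial $L^{1}$-gradient bound) to arrive at a recursive inequality
$$
y_{j+1}\leqslant \gamma\,2^{j\gamma}\,y_{j}^{1+\frac{1}{n+2}},
\qquad y_{j}:=\frac{|A_{j,k_{j}}|}{|Q_{r,\theta}(x_{0},\overline{t})|},
$$
where, thanks to the choice of $\theta$, the constant $\gamma$ depends only on the data, $\xi$, $\omega$, $r$, $\theta$ and no longer carries a destabilizing factor. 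Then Lemma \ref{lem3.2} gives $y_{j}\to0$ provided $y_{0}\leqslant\nu_{1}$ with $\nu_{1}=\gamma^{-(n+2)}$ (or the explicit value dictated by Lemma \ref{lem3.2}), and $\lim_{j\to\infty}y_{j}=0$ is exactly the assertion \eqref{eq3.18}, since $k_{j}\to\mu_{+}-\tfrac{\xi\omega\lambda(r)}{2}$ and $Q_{j}\downarrow Q_{r/2,\theta/2}(x_{0},\overline{t})$. The case of $v_{-}$ is treated symmetrically, testing with $-(u-k)_{-}$ and using the companion inequality \eqref{eq2.2}.

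The main obstacle — though it is a mild one compared with Lemma \ref{lem3.3} — is bookkeeping the constant $\nu_{1}$ so that it depends only on the admissible parameters: one must check that after transferring $g,\psi$ to $(x_{0},t_{0})$ via $({\rm g}_{2})$ and using the hypothesis on $\theta$, every occurrence of $\psi\bigl(x_{0},t_{0},\tfrac{\xi\omega\lambda(r)}{r}\bigr)$ cancels against the $r^{2}/\theta$ factors, leaving a purely data-dependent recursion constant; the presence of the two alternatives $(\Psi_{1})$/$(\Psi_{2})$ only affects whether the monotonicity of $\psi(x_{0},t_{0},\cdot)$ is used to bound $\psi\bigl(x_{0},t_{0},\tfrac{(u-k_{j})_{+}}{r}+\cdots\bigr)$ from above by $\psi$ at a fixed comparable argument, which is harmless here because the truncations are uniformly bounded by $\xi\omega\lambda(r)$ from above and by the failure of \eqref{eq3.11} from below. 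I would leave these routine verifications implicit and simply refer back to the corresponding steps in the proof of Lemma \ref{lem3.5}.
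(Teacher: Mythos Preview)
Your approach is essentially the paper's, but there is one concrete slip in the bookkeeping of the time cut-off that you should fix. You write that the hypothesis $\theta\leqslant r^{2}/\psi\bigl(x_{0},t_{0},\tfrac{\xi\omega\lambda(r)}{r}\bigr)$ ``makes the bracketed quantity $\bigl(1+\tfrac{r^{2}}{\theta\psi(\cdots)}\bigr)$ bounded by a data constant''. This is the wrong direction: the hypothesis gives $r^{2}/(\theta\psi)\geqslant1$, so that bracket is \emph{not} bounded above by anything useful. The paper resolves this not by bounding the bracket but by eliminating it: one takes $\chi(t)\equiv1$ (and therefore does not shrink the time interval at all --- drop the sequence $\theta_{j}$ and work in $Q_{r_{j},\theta}(x_{0},\overline{t})$ throughout). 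With $\chi\equiv1$ the term $\iint(u-k_{j})_{+}^{2}\zeta_{j}^{c_{2}}|\chi_{t}|$ in \eqref{eq2.1} and \eqref{eq2.3} vanishes, so the factor $r^{2}/(\theta\psi)$ never appears in the energy estimates; and the initial-slice term, now genuinely at $\overline{t}-\theta$, is killed by \eqref{eq3.16} exactly as you say. The only place the hypothesis on $\theta$ is then used is in the Sobolev step, where it bounds the \emph{opposite} factor $(\theta\psi/r^{2})^{1/(n+2)}\leqslant1$, giving the clean recursion $y_{j+1}\leqslant\gamma\,2^{j\gamma}y_{j}^{1+1/(n+2)}$ and $\nu_{1}=\gamma^{-1}$ depending only on the data.

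Your nested sequences $r_{j}$, levels $k_{j}$, the use of $({\rm g}_{1})$, $({\rm g}_{2})$ and the failure of \eqref{eq3.11} to transfer $g,\psi$ to $(x_{0},t_{0})$, and the invocation of Lemma~\ref{lem3.2} are all exactly right; only the handling of $\chi_{j}$ needs this correction.
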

\begin{proof}
The proof is similar to that of Lemma \ref{lem3.5}. Taking $\chi(t)=1$, using inequalities
\eqref{eq2.1}, \eqref{eq2.3} and our choice of $\theta$, and repeating the same arguments as
in the previous proof, we prove the theorem with $\nu_{1}=\gamma^{-1}$.
\end{proof}


\section{Continuity in the ''degenerate'' case}\label{Sect4}

Fix $(x_{0},t_{0})\in\Omega_{T}$ and consruct the cylinder
$Q_{R_{0},R_{0}}(x_{0},t_{0})\subset\Omega_{T}$ and assume that conditions
${\rm (g_{1})}$, ${\rm (g_{2})}$, ${\rm (\Psi_{1})}$ hold in the cylinder
$Q_{R_{0},R_{0}}(x_{0},t_{0})$. Let $\rho>0$ be such that $\rho<R_{0}^{\overline{\delta}}$,
$\overline{\delta}>1+\delta+\delta/\delta_{0}$, where $\delta_{0}$ is the number from condition
${\rm (g_{2})}$ and $\delta$ was defined in ${\rm (\Psi_{1})}$. We consruct the cylinder
$$
Q_{\rho}(x_{0},t_{0}):=B_{\rho}(x_{0})
\times\left( t_{0}-\frac{1+b_{0}}{g(x_{0},t_{0},1)}\rho^{2-\frac{\delta}{\overline{\delta}}},
t_{0}\right)\subset Q_{R_{0},R_{0}}(x_{0},t_{0}),
$$
and set $\mu_{+}:=\esssup\limits_{Q_{\rho}(x_{0},t_{0})}u$,
$\mu_{-}:=\essinf\limits_{Q_{\rho}(x_{0},t_{0})}u$, $\omega:=\mu_{+}-\mu_{-}$.

Fix sufficiently large positive number $s_{\ast}$ which will be specified later
depending only on the data. If
$\omega\geqslant2^{s_{\ast}}(1+b_{0})\rho^{1-\frac{\delta}{\overline{\delta}}}/\lambda(\rho)$,
then we have
$$
\psi\left(x_{0},t_{0},\frac{\omega\,\lambda(\rho)}{2^{s_{\ast}}\rho}\right)
\geqslant
\psi\left(x_{0},t_{0}, \frac{1+b_{0}}{\rho^{\delta/\overline{\delta}}} \right)
=\rho^{\delta/\overline{\delta}}\,\,
\frac{g\left(x_{0},t_{0},(1+b_{0})\rho^{-\delta/\overline{\delta}}\right)}
{1+b_{0}}\geqslant \rho^{\delta/\overline{\delta}}\,\,
\frac{g(x_{0},t_{0},1)}{1+b_{0}},
$$
and therefore
$$
Q_{\rho,\theta_{\ast}}(x_{0},t_{0})\subset Q_{\rho}(x_{0},t_{0}),
\quad \theta_{\ast}:=\frac{\rho^{2}}{\psi\left(x_{0},t_{0},
\dfrac{\omega\,\lambda(\rho)}{2^{s_{\ast}}\rho}\right)}.
$$

In $Q_{\rho, \theta_{\ast}}(x_{0},t_{0})$ consider the cylinders
$$
Q_{\rho,\eta}(x_{0},\overline{t}):=B_{\rho}(x_{0})\times(\overline{t}-\eta,\overline{t}),
\quad \eta=\frac{\rho^{2}}{4\,\psi\left(x_{0},t_{0}, \dfrac{\omega\,\lambda(\rho)}{2^{s_{0}}\rho}
 \right)},
 \quad t_{0}-\theta_{\ast}\leqslant \overline{t}-\eta<\overline{t}\leqslant t_{0},
$$
where $s_{0}$ is fixed by the condition $2^{s_{0}-1}\geqslant M$, $s_{0}<s_{\ast}$.

The following two alternative cases are possible.

\textsl{First alternative}. There exists a cylinder
$Q_{\rho,\eta}(x_{0},\overline{t})\subset Q_{\rho, \theta_{\ast}}(x_{0},t_{0})$
such that
$$
\left|\left\{(x,t)\in Q_{\rho,\eta}(x_{0},\overline{t}):
u(x,t)\leqslant\mu_{-}+\frac{\omega\,\lambda(\rho)}{2^{s_{0}}}  \right\}  \right|
\leqslant \nu\,|Q_{\rho,\eta}(x_{0},\overline{t})|,
$$
where $\nu$ is a sufficiently small positive number which will be chosen later
depending only upon the data.

\textsl{Second alternative}. For all cylinders
$Q_{\rho,\eta}(x_{0},\overline{t})\subset Q_{\rho, \theta_{\ast}}(x_{0},t_{0})$
the opposite inequality
$$
\left|\left\{(x,t)\in Q_{\rho,\eta}(x_{0},\overline{t}):
u(x,t)\leqslant\mu_{-}+\frac{\omega\,\lambda(\rho)}{2^{s_{0}}}  \right\}  \right|
> \nu\,|Q_{\rho,\eta}(x_{0},\overline{t})|
$$
holds.

Fix positive number $c_{\ast}$ which will be chosen depending only upon the data
and assume that
\begin{equation}\label{eq4.1}
\omega\geqslant c_{\ast}(1+b_{0})\,\frac{\rho^{1-\frac{\delta}{\overline{\delta}}}}
{\lambda(\rho)}.
\end{equation}


\subsection{Analysis of the first alternative}\label{subSect4.1}

Using Lemma \ref{lem3.5} in the cylinder $Q_{\rho,\eta}(x_{0},\overline{t})$ and
choosing $\nu$ from the condition \eqref{eq3.15} $\nu=\gamma^{-1}$, we obtain that
$$
u(x,\overline{t})\geqslant \mu_{-}+\frac{\omega\,\lambda(\rho)}{2^{s_{0}+2}}
\quad \text{for all} \ x\in B_{\rho/2}(x_{0}).
$$
Choosing $s_{3}$ from the condition
$$
\frac{2^{(s_{\ast}-s_{0}-2)(q-2)}}{s_{3}-s_{0}-2}
\leqslant \nu_{1},
$$
where $\nu_{1}$ is the number claimed by Lemma \ref{lem3.6}, and using the fact that
by ${\rm (g_{1})}$
$$
\frac{\psi\left(x_{0},t_{0}, \dfrac{\omega\,\lambda(\rho)}{2^{s_{0}+2}}\right)}
{\psi\left(x_{0},t_{0}, \dfrac{\omega\,\lambda(\rho)}{2^{s_{\ast}}}\right)}
\leqslant 2^{(s_{\ast}-s_{0}-2)(q-2)},
$$
from Lemmas \ref{lem3.4} and \ref{lem3.6} we conclude that if $c_{\ast}\geqslant2^{s_{3}}$,
then
$$
u(x,t)\geqslant \mu_{-}+\frac{\omega\,\lambda(\rho)}{2^{s_{3}}} \quad
\text{for a.a.} \ (x,t)\in Q_{\rho/8, \eta/8}(x_{0},t_{0}),
$$
which implies that
\begin{equation}\label{eq4.2}
\osc\limits_{Q_{\rho/8, \eta/8}(x_{0},t_{0})}u
\leqslant \left(1-2^{-s_{3}}\lambda(\rho)\right)\omega.
\end{equation}


\subsection{Analysis of the second alternative}\label{subSect4.2}

By the assumption of the second alternative, we have
\begin{equation}\label{eq4.3}
\left|\left\{ (x,t)\in Q_{\rho,\eta}(x_{0},\overline{t}):
u(x,t)\geqslant\mu_{+}-\frac{\omega\,\lambda(\rho)}{2^{s_{0}}} \right\} \right|
\leqslant (1-\nu)\,|Q_{\rho,\eta}(x_{0},\overline{t})|,
\end{equation}
for all cylinders
$Q_{\rho,\eta}(x_{0},\overline{t})\subset Q_{\rho,\theta_{\ast}}(x_{0},t_{0})$.

\textsl{Claim}. Fix a cylinder $Q_{\rho,\eta}(x_{0},\overline{t})$, then there exists
$\widetilde{t}\in\left( \overline{t}-\eta, \overline{t}-\eta\,\dfrac{\nu}{2} \right)$
such that
\begin{equation}\label{eq4.4}
\left|\left\{ x\in B_{\rho}(x_{0}):
u(x,\widetilde{t})\geqslant \mu_{+}- \frac{\omega\,\lambda(\rho)}{2^{s_{0}}} \right\} \right|
\leqslant \frac{1-\nu}{1-\nu/2}\, |B_{\rho}(x_{0})|.
\end{equation}

Suppose not. Then for all
$t\in \left( \overline{t}-\eta, \overline{t}-\eta\,\dfrac{\nu}{2} \right)$
there holds
$$
\left|\left\{ x\in B_{\rho}(x_{0}):
u(x,t)\geqslant \mu_{+}- \frac{\omega\,\lambda(\rho)}{2^{s_{0}}} \right\} \right|
> \frac{1-\nu}{1-\nu/2}\, |B_{\rho}(x_{0})|.
$$
Hence
\begin{multline*}
\left|\left\{ (x,t)\in Q_{\rho,\eta}(x_{0},\overline{t}):
u(x,t)\geqslant \mu_{+}- \frac{\omega\,\lambda(\rho)}{2^{s_{0}}} \right\}  \right|
\\
\geqslant
\int\limits_{\overline{t}-\eta}^{\overline{t}-\eta\,\frac{\nu}{2}}
\left|\left\{ x\in B_{\rho}(x_{0}):
u(x,t)\geqslant \mu_{+}- \frac{\omega\,\lambda(\rho)}{2^{s_{0}}} \right\} \right|dt
>(1-\nu)\,|Q_{\rho,\eta}(x_{0},\overline{t})|,
\end{multline*}
which contradicts \eqref{eq4.3}.

Inequality \eqref{eq4.4} together with Lemma \ref{lem3.2} imply
$$
\left|\left\{ x\in B_{\rho}(x_{0}):
u(x,t)\geqslant \mu_{+}- \frac{\omega\,\lambda(\rho)}{2^{s_{1}}} \right\} \right|
\leqslant \left( 1-\frac{1}{2}\bigg( \frac{\nu/2}{1-\nu} \bigg)^{2} \right)
|B_{\rho}(x_{0})| \leqslant \bigg( 1-\frac{\nu^{2}}{8} \bigg)\,|B_{\rho}(x_{0})|,
$$
for all $t\in(\widetilde{t}, \widetilde{t}+\eta)$ and $s_{1}$ is the number defined
in Lemma \ref{lem3.2}.

Since \eqref{eq4.3} holds true for all cylinders $Q_{\rho,\eta}(x_{0},\overline{t})$,
from the previous inequality we obtain
\begin{equation}\label{eq4.5}
\left|\left\{ x\in B_{\rho}(x_{0}):
u(x,t)\geqslant \mu_{+}- \frac{\omega\,\lambda(\rho)}{2^{s_{1}}} \right\} \right|
\leqslant \bigg( 1-\frac{\nu^{2}}{8} \bigg)\,|B_{\rho}(x_{0})|,
\end{equation}
for all $t\in(t_{0}-\theta_{\ast}, t_{0})$.

\begin{lemma}\label{lem4.1}
For any $\nu\in(0,1)$ there exists a number $s_{\ast}>s_{1}$, depending on the data only,
such that
\begin{equation}\label{eq4.6}
\left|\left\{ (x,t)\in Q_{\rho, \theta_{\ast}}(x_{0},t_{0}):
u(x,t)\geqslant \mu_{+}- \frac{\omega\,\lambda(\rho)}{2^{s_{\ast}+1}} \right\} \right|
\leqslant \nu\, |Q_{\rho, \theta_{\ast}}(x_{0},t_{0})|.
\end{equation}
\end{lemma}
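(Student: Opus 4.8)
The plan is to prove Lemma \ref{lem4.1} as a standard De\,Giorgi measure-shrinking argument: inequality \eqref{eq4.5} tells us that on \emph{every} time slice $t\in(t_0-\theta_\ast,t_0)$ the part of $B_\rho(x_0)$ where $u$ is close to $\mu_+$ occupies at most the fixed fraction $1-\nu^2/8$ of $B_\rho(x_0)$, and iterating this level by level upgrades it to smallness of the full space–time measure once the truncation level is deep enough. Set $k_j:=\mu_+-\omega\lambda(\rho)2^{-j}$ for $s_1\leqslant j\leqslant s_\ast+1$, and write $A_j:=Q_{\rho,\theta_\ast}(x_0,t_0)\cap\{u>k_j\}$, $A_j(t):=B_\rho(x_0)\cap\{u(\cdot,t)>k_j\}$. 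If $|A_{s_\ast+1}|=0$ there is nothing to prove, so one may assume $|A_{s_\ast+1}|>0$; then $\esssup_{Q_{\rho,\theta_\ast}(x_0,t_0)}u\geqslant k_{s_\ast+1}$, which forces $\tfrac12\,\omega\lambda(\rho)2^{-j}\leqslant M_{+}(k_j,\rho,\theta_\ast)\leqslant\omega\lambda(\rho)2^{-j}$ for all $s_1\leqslant j\leqslant s_\ast$. In particular every relevant argument of $g$ stays $\leqslant 2M\lambda(\rho)/\rho$ and $\geqslant(1+b_0)\rho^{-\delta/\overline\delta}$, so $({\rm g}_1)$, $({\rm g}_2)$ (with $K=2M$) and $(\Psi_1)$ are all applicable, and $({\rm g}_1)$ gives $g(x_0,t_0,M_+(k_j)/\rho)\asymp g(x_0,t_0,\omega\lambda(\rho)2^{-j}/\rho)=:G_j$.

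First I would apply the De\,Giorgi–Poincar\'e lemma (Lemma \ref{lem3.1}) on each slice with $k=k_j$, $l=k_{j+1}$; since \eqref{eq4.5} yields $|B_\rho(x_0)\setminus A_j(t)|\geqslant(\nu^2/8)|B_\rho(x_0)|$ for every $t\in(t_0-\theta_\ast,t_0)$ and $j\geqslant s_1$, one gets $(k_{j+1}-k_j)|A_{j+1}(t)|\leqslant C(\nu)\,\rho\int_{A_j(t)\setminus A_{j+1}(t)}|\nabla u|\,dx$; integrating over $(t_0-\theta_\ast,t_0)$ and using $k_{j+1}-k_j=\omega\lambda(\rho)2^{-j-1}$,
\begin{equation*}
\frac{\omega\lambda(\rho)}{2^{j+1}}\,|A_{j+1}|\leqslant C(\nu)\,\rho\iint_{A_j\setminus A_{j+1}}|\nabla u|\,dxdt .
\end{equation*}
Next I would bound the right‑hand side with the energy inequality \eqref{eq2.1} taken with $k=k_j$, $l=k_{j+1}$, parameter $\varepsilon\in(0,1)$ to be chosen, $\sigma$ fixed, $\zeta\equiv1$ on $B_\rho(x_0)$, and $\chi\equiv1$ on the bulk of $(t_0-\theta_\ast,t_0)$ with $\chi=0$ at the bottom of the cylinder (so the initial‑slice term of \eqref{eq2.1} drops out); dividing \eqref{eq2.1} by $G_j$ and using $({\rm g}_2)$ on $Q_{\rho,K_3\rho}(x_0,t_0)$ turns $\iint_{A_j\setminus A_{j+1}}|\nabla u|$ into the right‑hand side of \eqref{eq2.1} with the $g$'s frozen at $(x_0,t_0)$. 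The three terms are then estimated as follows. The first term is already carried by the thin layer and is $\lesssim\varepsilon^{-1}\rho^{-1}M_+(k_j)\,|A_j\setminus A_{j+1}|$. The term $\varepsilon^{p-1}\iint_{A_j}g(x,t,(u-k_j)_+/\rho)(u-k_j)_+\rho^{-1}\zeta^{c_2-q}\chi$ is controlled, by monotonicity of ${\rm v}\mapsto g(\cdot,{\rm v}){\rm v}$, $(u-k_j)_+\leqslant M_+(k_j)$ and $({\rm g}_2)$, by $\varepsilon^{p-1}\rho^{-1}M_+(k_j)\,G_j|A_j|$, contributing $\nu_0|A_j|$ with $\nu_0\asymp C(\nu)\varepsilon^{p-1}$ after the slice estimate. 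Finally, the time‑derivative term $\varepsilon^{p-1}\iint_{A_j}(u-k_j)_+^2|\chi_t|$ is bounded, using $(u-k_j)_+\leqslant M_+(k_j)$, $|A_j(t)|\leqslant|B_\rho(x_0)|$ and $\int|\chi_t|\,dt\leqslant\gamma$, by $\varepsilon^{p-1}M_+(k_j)^2|B_\rho(x_0)|$ — and here is where $(\Psi_1)$ together with the intrinsic choice $\theta_\ast=\rho^2/\psi(x_0,t_0,\omega\lambda(\rho)2^{-s_\ast}\rho^{-1})$ is used: since $\omega\lambda(\rho)2^{-s_\ast}\rho^{-1}\leqslant M_+(k_j)/\rho$ and $\psi(x_0,t_0,\cdot)$ is non‑decreasing on the relevant range, $\rho^2/\psi(x_0,t_0,M_+(k_j)/\rho)\leqslant\theta_\ast$, so after multiplying by $\rho$ and dividing by $G_j$ this term becomes $\varepsilon^{p-1}M_+(k_j)\,|B_\rho(x_0)|\,\theta_\ast\lesssim\varepsilon^{p-1}\omega\lambda(\rho)2^{-j}\,|Q_{\rho,\theta_\ast}(x_0,t_0)|$, contributing $\nu_0|Q_{\rho,\theta_\ast}(x_0,t_0)|$. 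Collecting everything and dividing by $\omega\lambda(\rho)2^{-j-1}$ gives
\begin{equation*}
|A_{j+1}|\leqslant \frac{C(\nu)}{\varepsilon}\,|A_j\setminus A_{j+1}|+\nu_0\,|A_j|+\nu_0\,|Q_{\rho,\theta_\ast}(x_0,t_0)|,\qquad s_1\leqslant j\leqslant s_\ast .
\end{equation*}

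I would then close by iteration. Choosing $\varepsilon$ so small (depending only on $\nu$ and the data) that $\nu_0\leqslant\nu/8$, and using $|A_j|=|A_j\setminus A_{j+1}|+|A_{j+1}|$ to absorb the $\nu_0|A_j|$ term, one obtains $|A_{j+1}|\leqslant C'(\nu)\,|A_j\setminus A_{j+1}|+\tfrac{\nu}{4}\,|Q_{\rho,\theta_\ast}(x_0,t_0)|$ for $s_1\leqslant j\leqslant s_\ast$. Summing over these $s_\ast+1-s_1$ values of $j$, using the telescoping identity $\sum_j|A_j\setminus A_{j+1}|=|A_{s_1}\setminus A_{s_\ast+1}|\leqslant|Q_{\rho,\theta_\ast}(x_0,t_0)|$ and the monotonicity $|A_{j+1}|\geqslant|A_{s_\ast+1}|$, gives $(s_\ast+1-s_1)\,|A_{s_\ast+1}|\leqslant\big(C'(\nu)+\tfrac{\nu}{4}(s_\ast+1-s_1)\big)|Q_{\rho,\theta_\ast}(x_0,t_0)|$, hence $|A_{s_\ast+1}|\leqslant\big(C'(\nu)/(s_\ast+1-s_1)+\nu/4\big)|Q_{\rho,\theta_\ast}(x_0,t_0)|$; taking $s_\ast$ large enough that $C'(\nu)/(s_\ast+1-s_1)\leqslant3\nu/4$ yields \eqref{eq4.6}. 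The main obstacle is the middle step: arranging the generalized Orlicz bookkeeping so that, after freezing the modulus via $({\rm g}_2)$, every constant in the recursion depends only on the data and $\nu$, with all powers of $\omega$, $\rho$, $\theta_\ast$ cancelling — concretely, exploiting $(\Psi_1)$ and the definition of $\theta_\ast$ to get $\rho^2/\psi(x_0,t_0,M_+(k_j)/\rho)\leqslant\theta_\ast$ and thereby trading the seemingly harmful factor $|B_\rho(x_0)|$ in the time‑derivative term for $|Q_{\rho,\theta_\ast}(x_0,t_0)|$; the cylinder/cutoff bookkeeping (making the initial term vanish, matching $M_+$ defined over $B_\rho$ with the energy estimate) is routine and can be carried out exactly as in \cite{SkrVoitNA20}.
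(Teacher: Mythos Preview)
Your proof is correct and follows essentially the same route as the paper's: De\,Giorgi--Poincar\'e on time slices via \eqref{eq4.5}, the energy inequality \eqref{eq2.1} with coefficients frozen through $({\rm g}_2)$, the use of $(\Psi_1)$ together with the intrinsic choice $\theta_\ast=\rho^2/\psi(x_0,t_0,\omega\lambda(\rho)2^{-s_\ast}/\rho)$ to control the time-derivative term, and a telescoping summation over levels. The only noteworthy difference is that the paper chooses $\varepsilon$ level-by-level, taking $\varepsilon=\big(|A^+_{k_s,3\rho/2,3\theta_\ast/2}\setminus A^+_{k_{s+1},3\rho/2,3\theta_\ast/2}|/|Q_{3\rho/2,3\theta_\ast/2}|\big)^{1/p}$ to get a clean $(s_\ast-s_1)^{-(p-1)/p}$ decay, whereas you fix $\varepsilon$ small once and carry an additive $\nu/4$ through the iteration; both closures are standard and yield $s_\ast$ depending only on the data and $\nu$.
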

\begin{proof}
For $s=s_{1}+1, \ldots, s_{\ast}$ set $k_{s}:=\mu_{+}-2^{-s}\omega\,\lambda(\rho)$ and use
inequality \eqref{eq2.1} in the cylinder $Q_{3\rho/2, 3\theta_{\ast}/2}(x_{0},t_{0})$ with
$\sigma=1/2$ and choose $\chi(t)$ such that $0\leqslant\chi(t)\leqslant1$, $\chi(t)=0$ for
$t\leqslant t_{0}-\frac{3}{2}\theta_{\ast}$, $\chi(t)=1$ for $t\geqslant t_{0}-\theta_{\ast}$
and $|\chi'(t)|\leqslant 2/\theta_{\ast}$.

We also assume that for every $s_{1}+1\leqslant s\leqslant s_{\ast}$
\begin{equation}\label{eq4.7}
\esssup\limits_{Q_{\rho, \theta_{\ast}}(x_{0},t_{0})}(u-k_{s})_{+}
\geqslant \frac{\omega\,\lambda(\rho)}{2^{s+1}},
\end{equation}
because, if for some $s_{1}+1\leqslant \overline{s}\leqslant s_{\ast}$
$$
\esssup\limits_{Q_{\rho, \theta_{\ast}}(x_{0},t_{0})}(u-k_{\overline{s}})_{+}
\leqslant \frac{\omega\,\lambda(\rho)}{2^{\overline{s}+1}},
$$
then inequality \eqref{eq4.6} is evident.

Using the fact that
$\esssup\limits_{Q_{3\rho/2, 3\theta_{\ast}/2}(x_{0},t_{0})}(u-k_{s})_{+}
\leqslant \dfrac{\omega\,\lambda(\rho)}{2^{s}}$, by conditions ${\rm (g_{1})}$,
${\rm (g_{2})}$ we have for $(x,t)\in Q_{3\rho/2, 3\theta_{\ast}/2}(x_{0},t_{0})$
$$
g\left( x,t, \frac{(u-k_{s})_{+}}{\rho} \right)\leqslant \gamma
g\left( x_{0},t_{0}, \frac{\omega\,\lambda(\rho)}{2^{s}\rho} \right)
\leqslant \gamma g\left( x,t, \frac{(u-k_{s})_{+}}{\rho} \right).
$$
By this, inequality \eqref{eq2.1} can be rewritten in the form
$$
\begin{aligned}
\iint\limits_{A^{+}_{k_{s+1},\rho, \theta_{\ast}}\setminus A^{+}_{k_{s},\rho, \theta_{\ast}}}
|\nabla u|\,dxdt
&\leqslant \frac{\gamma}{\varepsilon}\, \frac{\omega\,\lambda(\rho)}{2^{s}\rho}\,
|A^{+}_{k_{s+1},3\rho/2, 3\theta_{\ast}/2}\setminus A^{+}_{k_{s},3\rho/2, 3\theta_{\ast}/2}|
\\
&+\gamma\varepsilon^{\,p-1}\,\frac{\omega\,\lambda(\rho)}{2^{s}\rho}
\left( 1+ \frac{\psi\left( x_{0},t_{0}, \dfrac{\omega\,\lambda(\rho)}{2^{s_{\ast}}\rho} \right)}
{\psi\left( x_{0},t_{0}, \dfrac{\omega\,\lambda(\rho)}{2^{s}\rho} \right)} \right)
|Q_{\rho, \theta_{\ast}}(x_{0},t_{0})|.
\end{aligned}
$$

If $c_{\ast}\geqslant 2^{s_{\ast}+1}$, then by \eqref{eq4.1}
$$
\frac{\omega\,\lambda(\rho)}{2^{s_{\ast}}\rho}
\geqslant \frac{c_{\ast}(1+b_{0})}{2^{s_{\ast}}}\,
\rho^{-\delta/\overline{\delta}}\geqslant (1+b_{0})R_{0}^{-\delta},
$$
therefore condition ${\rm (\Psi_{1})}$ is applicable, and from the previous we obtain
\begin{multline*}
\iint\limits_{A^{+}_{k_{s+1},\rho,\theta_{\ast}}\setminus A^{+}_{k_{s},\rho,\theta_{\ast}}}
|\nabla u|\,dxdt
\\
\leqslant
\frac{\gamma}{\varepsilon}\,\frac{\omega\,\lambda(\rho)}{2^{s}\rho}\,
|A^{+}_{k_{s+1},3\rho/2,3\theta_{\ast}/2}\setminus A^{+}_{k_{s},3\rho/2,3\theta_{\ast}/2}|
+\gamma\varepsilon^{\,p-1}\,\frac{\omega\,\lambda(\rho)}{2^{s}\rho}\,
|Q_{\rho,\theta_{\ast}}(x_{0},t_{0})|.
\end{multline*}
Choosing $\varepsilon$ from the condition
$$
\varepsilon=
\left( \frac{|A^{+}_{k_{s+1},3\rho/2,3\theta_{\ast}/2}\setminus
A^{+}_{k_{s},3\rho/2,3\theta_{\ast}/2}|}
{|Q_{3\rho/2,3\theta_{\ast}/2}(x_{0},t_{0})|} \right)^{1/p},
$$
from this we get
\begin{equation}\label{eq4.8}
\iint\limits_{A^{+}_{k_{s+1},\rho,\theta_{\ast}}\setminus A^{+}_{k_{s},\rho,\theta_{\ast}}}
|\nabla u|\,dxdt
\leqslant \gamma\varepsilon^{\,p-1}\,\frac{\omega\,\lambda(\rho)}{2^{s}\rho}\,
|Q_{\rho,\theta_{\ast}}(x_{0},t_{0})|.
\end{equation}
Using Lemma \ref{lem3.1} with $k=k_{s}$, $l=k_{s+1}$, due to \eqref{eq4.5} we obtain the
inequality
$$
\frac{\omega\,\lambda(\rho)}{2^{s+1}}\,|A^{+}_{k_{s+1},\rho}(t)|
\leqslant \gamma(\nu)\,\rho
\int\limits_{A^{+}_{k_{s+1},\rho}(t)\setminus A^{+}_{k_{s},\rho}(t)}
|\nabla u|\,dx,
$$
for all $t\in(t_{0}-\theta_{\ast},t_{0})$, where
$A^{+}_{k,\rho}(t):=\{x\in B_{\rho}(x_{0}):u(x,t)>k\}$.
Integrating tha last inequality with respect to $t\in(t_{0}-\theta_{\ast},t_{0})$
and using \eqref{eq4.8}, we obtain
$$
\left( \frac{|A^{+}_{k_{s+1},\rho,\theta_{\ast}}|}{|Q_{\rho,\theta_{\ast}}(x_{0},t_{0})|} \right)
^{\frac{p}{p-1}}\leqslant \gamma(\nu)\,
\frac{|A^{+}_{k_{s+1},3\rho/2,3\theta_{\ast}/2}\setminus
A^{+}_{k_{s},3\rho/2,3\theta_{\ast}/2}|}
{|Q_{3\rho/2,3\theta_{\ast}/2}(x_{0},t_{0})|}.
$$
Summing up the last inequality in $s$, $s_{1}+1\leqslant s\leqslant s_{\ast}$,
we conclude that
$$
|A^{+}_{k_{s+1},\rho,\theta_{\ast}}|\leqslant
\frac{\gamma(\nu)}{(s_{\ast}-s_{1}-1)^{\frac{p-1}{p}}}\,
|Q_{\rho,\theta_{\ast}}(x_{0},t_{0})|.
$$
Choosing $s_{\ast}$ by the condition
$\dfrac{\gamma(\nu)}{(s_{\ast}-s_{1}-1)^{\frac{p-1}{p}}}
\leqslant\nu$, we obtain inequality \eqref{eq4.6}, which proves
Lemma \ref{lem4.1}.
\end{proof}

Using Lemma \ref{lem3.5}, we obtain
$u(x,t)\leqslant \mu_{+}- \dfrac{\omega\,\lambda(\rho)}{2^{s_{\ast}+3}}$
for a.a. $(x,t)\in Q_{\rho/2,\theta_{\ast}/2}(x_{0},t_{0})$,
which implies that
\begin{equation}\label{eq4.9}
\osc\limits_{Q_{\rho/2,\theta_{\ast}/2}(x_{0},t_{0})} u
\leqslant\left(1-\frac{\lambda(\rho)}{2^{s_{\ast}+3}}\right)\omega.
\end{equation}
Collecting \eqref{eq4.2} and \eqref{eq4.9}, we obtain that
\begin{equation}\label{eq4.10}
\osc\limits_{Q_{\rho/8,\theta_{\ast}/8}(x_{0},t_{0})} u
\leqslant\left(1-\frac{\lambda(\rho)}{2^{s_{3}+1}}\right)\omega,
\end{equation}
where $s_{3}$ was defined in \eqref{eq4.2} depending only on the data.

For $j=0,1,2,\ldots$ define the sequences
$$
r_{j}:=c^{-j}\rho, \quad
\theta_{j}:=\dfrac{r_{j}^{2}}
{\psi\left(x_{0},t_{0}, \dfrac{\omega_{j}\lambda(r_{j})}{r_{j}}\right)},
\quad
Q_{j}:=Q_{r_{j}, \theta_{j}}(x_{0},t_{0}),
$$
where $c>1$ will be chosen depending only on the known data, and
$$
\omega_{j+1}:=\max\left\{\left(1-\frac{\lambda(r_{j})}{2^{s_{3}+1}}\right)\omega_{j}, \
c_{\ast}(1+b_{0})\,\frac{r_{j}^{1-\frac{\delta}{\overline{\delta}}}}{\lambda(r_{j})} \right\},
 \quad \omega_{0}:=\omega.
$$

Note the inequality
\begin{equation}\label{eq4.11}
\lambda(\rho_{1})\leqslant
2\left( \frac{\rho_{1}}{\rho_{2}} \right)^{1-\delta_{0}}
\lambda(\rho_{2}), \quad
0<\rho_{2}<\rho_{1}<R_{0}/2,
\end{equation}
which is simple consequence of our choice of $\lambda(r)$.

If inequality \eqref{eq4.1} holds, then by \eqref{eq4.11} we have
$$
\begin{aligned}
\omega_{j+1}
&\geqslant \left( 1-\frac{\lambda(r_{j})}{2^{s_{3}+1}} \right)\omega_{j}
\geqslant \left( 1-\frac{1}{2^{s_{3}+1}} \right)\omega_{j}
\\
&\geqslant
\left( 1-\frac{1}{2^{s_{3}+1}} \right)^{j+1}\omega
\geqslant
c_{\ast}(1+b_{0})\,\frac{\rho^{1-\frac{\delta}{\overline{\delta}}}}{\lambda(\rho)}
\left( 1-\frac{1}{2^{s_{3}+1}} \right)^{j+1}
\\
&\geqslant
c_{\ast}(1+b_{0})\,\frac{r_{j+1}^{1-\frac{\delta}{\overline{\delta}}}}{\lambda(r_{j+1})}
\left( \frac{1}{2}\,c^{\delta_{0}-\frac{\delta}{\overline{\delta}}}
\left(1-\frac{1}{2^{s_{3}+1}}\right) \right)^{j+1}.
\end{aligned}
$$
So, if
$c\geqslant \left(2\left(1-\dfrac{1}{2^{s_{3}+1}}\right)^{-1}\right)
^{\frac{1}{\delta_{0}-\delta/\overline{\delta}}}$,
then from the previous inequality we obtain
$$
\omega_{j+1}\geqslant c_{\ast}(1+b_{0})\,
\frac{r_{j+1}^{1-\frac{\delta}{\overline{\delta}}}}{\lambda(r_{j+1})},
\quad j=0,1,2,\ldots .
$$
Moreover, if
$c\geqslant \left(2\left(1-\dfrac{1}{2^{s_{3}+1}}\right)^{-1}\right)
^{1/\delta_{0}}$,
then $\theta_{j+1}\leqslant \theta_{j}$ and $Q_{j+1}\subset Q_{j}$,
$j=0,1,2,\ldots$\,. Therefore, if
$c=8\left(2\left(1-\dfrac{1}{2^{s_{3}+1}}\right)^{-1}\right)
^{\frac{1}{\delta_{0}-\delta/\overline{\delta}}}$,
by \eqref{eq4.10} we have $\osc\limits_{Q_{1}}u\leqslant \omega_{1}$.

Repeating the previous procedure, by our choices we obtain that
$\osc\limits_{Q_{j}}u\leqslant \omega_{j}$, $j=0,1,2,\ldots$\,.
Iterating this inequality, we have for any $j\geqslant1$
\begin{equation}\label{eq4.12}
\begin{aligned}
\osc\limits_{Q_{j}}u
&\leqslant \omega\prod\limits_{i=0}^{j-1}
\left(1-\frac{\lambda(r_{i})}{2^{s_{3}+1}}  \right)+c_{\ast}(1+b_{0})
\sum\limits_{i=0}^{j-1}
\frac{r_{i}^{1-\frac{\delta}{\overline{\delta}}}}{\lambda(r_{i})}
\\
&\leqslant \omega \exp\left( -2^{-s_{3}-1}\sum\limits_{i=0}^{j-1}\lambda(r_{i})\right)
+c_{\ast}(1+b_{0})\sum\limits_{i=0}^{j-1} \frac{r_{i}^{1-\delta_{0}}}{\lambda(r_{i})}
\\
&\leqslant \omega \exp\left(-\gamma\int_{r_{j}}^{c\rho} \lambda(s)\,\frac{ds}{s}\right)
+c_{\ast}(1+b_{0})\, \frac{\rho^{1-\delta_{0}}}{\lambda(\rho_{0})}\,\sum\limits_{i=0}^{j-1}
\left( \frac{3}{4} \right)^{(1-\delta_{0})i}
\\
&\leqslant \omega \exp\left(-\gamma\int_{r_{j}}^{c\rho} \lambda(s)\,\frac{ds}{s}\right)
+\gamma(1+b_{0})\, \frac{\rho^{1-\delta_{0}}}{\lambda(\rho_{0})}.
\end{aligned}
\end{equation}
By our choices
$\dfrac{\omega_{j}\lambda(r_{j})}{r_{j}}
\geqslant c_{\ast}(1+b_{0})\,R_{0}^{-\delta}$, $j=1,2, \ldots$\,.
So, by ${\rm (\Psi_{1})}$ and ${\rm (g_{1})}$ we obtain
$$
\theta_{j}\geqslant \frac{r_{j}^{2}}{\psi(x_{0},t_{0},2M/r_{j})}\geqslant
\frac{r_{j}^{q}(2M)^{2-q}}{g(x_{0},t_{0},1)}=\widetilde{\theta}_{j}, \quad
j\geqslant1.
$$
Therefore, inequality \eqref{eq4.12} implies
$$
\osc\limits_{Q_{r_{j}, \widetilde{\theta}_{j}}(x_{0},t_{0})}u
\leqslant 2M
\exp\left(-\gamma\int_{r_{j}}^{\rho} \lambda(s)\,\frac{ds}{s}\right)
+\gamma(1+b_{0})\, \frac{\rho^{1-\delta_{0}}}{\lambda(\rho)},
$$
which yields the continuity of $u$ at $(x_{0},t_{0})$.
To complete the proof of Theorem \ref{th2.1} in the ''degenerate'' case,
note that if $p\geqslant2$, then, as it was mentioned in Section \ref{Introduction},
by (${\rm g}_{1}$) condition ($\Psi_{1}$) holds with $\delta=b_{0}=0$; therefore,
the number $R_{0}$ claimed in the definition of the cylinder $Q_{R_{0},R_{0}}(x_{0},t_{0})$
depends only on the distance between $(x_{0},t_{0})$ and $\partial\Omega_{T}$, so, in this case
$u\in C_{{\rm loc}}(\Omega_{T})$.



\section{Continuity in the ''singular'' case}\label{Sect5}

Fix $(x_{0},t_{0})\in \Omega_{T}$ and construct the cylinder
$Q_{R_{0},R_{0}}(x_{0},t_{0})\subset \Omega_{T}$ and assume that conditions
${\rm (g_{1})}$, ${\rm (g_{2})}$, ${\rm (\Psi_{2})}$ hold in the cylinder
$Q_{R_{0},R_{0}}(x_{0},t_{0})$. Let $\rho>0$ be such that
$\rho<R_{0}^{\overline{\delta}}$, $\overline{\delta}>1+\delta q+\delta/\delta_{0}$,
where $\delta_{0}$ is the number defined in condition ${\rm (g_{2})}$ and $\delta$ was
defined in ${\rm (\Psi_{2})}$ and construct the cylinder
$$
Q_{\rho}(x_{0},t_{0}):=B_{\rho}(x_{0})\times
\left( t_{0}-\frac{2M\rho}{g(x_{0},t_{0},1)}, \, t_{0} \right)
\subset Q_{R_{0},R_{0}}(x_{0},t_{0}),
$$
and set $\mu_{+}:=\esssup\limits_{Q_{\rho}(x_{0},t_{0})}u$,
$\mu_{-}:=\essinf\limits_{Q_{\rho}(x_{0},t_{0})}u$, $\omega:=\mu_{+}-\mu_{-}$.

Fix sufficiently large positive number $c_{\ast}$ which will be chosen later depending
only upon the data. If
\begin{equation}\label{eq5.1}
\omega\geqslant c_{\ast}(1+b_{0})\,
\frac{\rho^{1-\frac{\delta}{\overline{\delta}}}}{\lambda(\rho)},
\end{equation}
then
$$
\psi\left(x_{0},t_{0}, \frac{\omega\lambda(\rho)}{\rho}\right)
=\frac{\rho}{\omega\lambda(\rho)}\,
g\left(x_{0},t_{0}, \frac{\omega\lambda(\rho)}{\rho}\right)
\geqslant \frac{\rho\, g(x_{0},t_{0},1)}{2M},
$$
and the following inclusion is true $Q_{\rho, \theta}(x_{0},t_{0})\subset Q_{\rho}(x_{0},t_{0})$,
$\theta:= \dfrac{\rho^{2}}
{\psi\left( x_{0},t_{0},\eta\,\frac{\omega\lambda(\rho)}{\rho} \right)}$,
where $\eta$ is a sufficiently small positive number depending only on the
known data, which will be specified later.

The following two alternative cases are possible:
$$
\left| \left\{x\in B_{\rho}(x_{0}): u(x,t_{0}-\theta)\leqslant\mu_{-}+
\frac{\omega\lambda(\rho)}{2^{s_{0}}}  \right\} \right|
\leqslant \frac{1}{2}\,|B_{\rho}(x_{0})|,
$$
or
$$
\left| \left\{x\in B_{\rho}(x_{0}): u(x,t_{0}-\theta)\geqslant\mu_{+}-
\frac{\omega\lambda(\rho)}{2^{s_{0}}}  \right\} \right|
\leqslant \frac{1}{2}\,|B_{\rho}(x_{0})|,
$$
where the number $s_{0}$ is fixed by the condition $2^{s_{0}-1}\geqslant M$.
Both alternative cases can be considered completely similar and assume, for example,
the first one.

Further we will also assume that inequality \eqref{eq5.1} holds.

Lemma \ref{lem3.3} with
$
\theta= \dfrac{\rho^{2}}
{\psi\left( x_{0},t_{0},\eta\,\frac{\omega\lambda(\rho)}{\rho} \right)}
$
implies that
\begin{equation}\label{eq5.2}
\left|\left\{ x\in B_{\rho}(x_{0}):
u(x,t)\leqslant\mu_{-}+\eta\,\omega\lambda(\rho) \right\} \right|
\leqslant\frac{7}{8}\,\,|B_{\rho}(x_{0})| \quad
\text{for all} \ t\in(t_{0}-\theta, t_{0}).
\end{equation}

For the function $v_{-}=u-\mu_{-}$ we will use inequality \eqref{eq2.5} with
$\sigma=1/2$, $r=\rho$, $k=\varepsilon^{j}\omega\lambda(\rho)$,
$j=1,2,\ldots,j_{\ast}$, where $\varepsilon\in(0,1)$ and $j_{\ast}>1$ will be determined
later depending only on the  data. Set 
$$
A_{j}(t):=\{x\in B_{\rho}(x_{0}): v_{-}(x,t)\leqslant \varepsilon^{j}\omega\lambda(\rho)\},
$$
$$
Y_{j}(t):=\frac{1}{|B_{\rho}(x_{0})|} \int\limits_{A_{j}(t)}
\frac{t-t_{0}+\theta}{\theta}\,\zeta^{\,c_{2}}\,dx, \quad
y_{j}:=\sup\limits_{t_{0}-\frac{\theta}{2}<t<t_{0}} Y_{j}(t).
$$
By ${\rm (g_{2})}$ we have for any $(x,t)\in Q_{\rho,\theta}(x_{0},t_{0})$
$$
g(x,t,w_{k,\varepsilon}/\rho)\leqslant c_{2}g(x_{0},t_{0},w_{k,\varepsilon}/\rho)
\leqslant c_{2}^{2}g(x,t,w_{k,\varepsilon}/\rho),
$$
where $w_{k,\varepsilon}=(1+\varepsilon)k-(v_{-}-k)_{-}$, $k=\varepsilon^{j}\omega\lambda(\rho)$.
Therefore, we estimate the second term on the right-hand side of \eqref{eq2.5} as follows:
\begin{equation}\label{eq5.3}
\gamma(1+\varepsilon_{1}^{-p})\int\limits_{B_{\rho}(x_{0})\times\{t\}}
\frac{g(x,t,w_{k,\varepsilon}/\rho)}{g(x_{0},t_{0},w_{k,\varepsilon}/\rho)}\,
\zeta^{\,c_{2}-q}\,dx\leqslant \gamma(1+\varepsilon_{1}^{-p})\,|B_{\rho}(x_{0})|.
\end{equation}
By ${\rm (g_{1})}$ we have that
$$
\begin{aligned}
\int\limits_{0}^{(v_{-}-k)_{-}}\frac{(1+\varepsilon)k-s}
{\mathcal{G}\left( x_{0},t_{0}, \frac{(1+\varepsilon)k-s}{\rho} \right)}\,dx
&\leqslant \gamma\rho \int\limits_{0}^{(v_{-}-k)_{-}}
\frac{ds}{g\left( x_{0},t_{0}, \frac{(1+\varepsilon)k-s}{\rho} \right)}
\\
&=\gamma\rho^{2} \int\limits_{0}^{(v_{-}-k)_{-}}
\frac{ds}{((1+\varepsilon)k-s)\,\psi\left( x_{0},t_{0}, \frac{(1+\varepsilon)k-s}{\rho} \right) }.
\end{aligned}
$$

If $c_{\ast}\geqslant \varepsilon^{-j_{\ast}}$, then by \eqref{eq5.1}
$$
\frac{k}{\rho}=\frac{\varepsilon^{j}\omega\lambda(\rho)}{\rho}
\geqslant \varepsilon^{j_{\ast}}c_{\ast}(1+b_{0})\rho^{-\delta/\overline{\delta}}
\geqslant (1+b_{0})R_{0}^{-\delta},
$$
so condition  ${\rm (\Psi_{2})}$ is applicable and by ${\rm (\Psi_{2})}$
$$
\psi\left( x_{0},t_{0}, \frac{(1+\varepsilon)k-s}{\rho} \right)
\geqslant
\psi\left( x_{0},t_{0}, \frac{(1+\varepsilon)k}{\rho} \right)
\geqslant \frac{\psi\left( x_{0},t_{0}, k/\rho \right)}{1+\varepsilon},
$$
provided that $0<s<(v_{-}-k)_{-}$. Therefore,
$$
\begin{aligned}
\int\limits_{0}^{(v_{-}-k)_{-}}\frac{(1+\varepsilon)k-s}
{\mathcal{G}\left( x_{0},t_{0}, \frac{(1+\varepsilon)k-s}{\rho} \right)}\,dx
&\leqslant
\frac{\gamma(1+\varepsilon)\rho^{2}}{\psi\left( x_{0},t_{0}, k/\rho \right)}
\int\limits_{0}^{(v_{-}-k)_{-}} \frac{ds}{(1+\varepsilon)k-s}
\\
&\leqslant
\frac{\gamma\rho^{2}}{\psi\left( x_{0},t_{0}, k/\rho \right)}\,
\ln\frac{(1+\varepsilon)k}{(1+\varepsilon)k-(v_{-}-k)_{-}},
\quad k=\varepsilon^{j}\omega\lambda(\rho).
\end{aligned}
$$
From this, we estimate the first term on the right-hand side of \eqref{eq2.5} as
follows:
\begin{equation}\label{eq5.4}
\begin{aligned}
&\frac{\gamma}{\theta}\int\limits_{B_{\rho}(x_{0})\times\{t\}}
\Phi_{k}(x_{0},t_{0}, v_{-})\,\zeta^{\,c_{2}}\,dx
\\
&\leqslant
\gamma\, \frac{\psi\left(x_{0},t_{0}, \eta\,\frac{\omega\lambda(\rho)}{\rho}\right)}
{\psi\left(x_{0},t_{0}, \varepsilon^{j}\,\frac{\omega\lambda(\rho)}{\rho}\right)}\,
\int\limits_{B_{\rho}(x_{0})\times\{t\}}
\ln\frac{(1+\varepsilon)\varepsilon^{j}\omega\lambda(\rho)}
{(1+\varepsilon)\varepsilon^{j}\omega\lambda(\rho)-(v_{-}-\varepsilon^{j}\omega\lambda(\rho))_{-}}\,
\zeta^{\,c_{2}}\,dx
\\
&\leqslant \frac{\gamma}{\eta}
\int\limits_{B_{\rho}(x_{0})\times\{t\}}
\ln\frac{(1+\varepsilon)\varepsilon^{j}\omega\lambda(\rho)}
{(1+\varepsilon)\varepsilon^{j}\omega\lambda(\rho)-(v_{-}-\varepsilon^{j}\omega\lambda(\rho))_{-}}\,
\zeta^{\,c_{2}}\,dx
\\
&\leqslant \gamma
\int\limits_{B_{\rho}(x_{0})\times\{t\}}
\ln\frac{(1+\varepsilon)\varepsilon^{j}\omega\lambda(\rho)}
{(1+\varepsilon)\varepsilon^{j}\omega\lambda(\rho)-(v_{-}-\varepsilon^{j}\omega\lambda(\rho))_{-}}\,
\,\frac{t-t_{0}+\theta}{\theta}\,\zeta^{\,c_{2}}\,dx,
\end{aligned}
\end{equation}
provided that $t\in (t_{0}-\theta/2, t_{0})$ and $\varepsilon\in(0,\eta)$.

Using \eqref{eq5.2}, by the Poincar\'{e} inequality we estimate the second term on the left-hand
side of \eqref{eq2.5} as follows:
\begin{equation}\label{eq5.5}
\begin{aligned}
&\gamma^{-1}\varepsilon_{1}^{1-p}\rho\int\limits_{B_{\rho}(x_{0})\times\{t\}}
\left| \nabla \ln \frac{(1+\varepsilon)k}{w_{k,\varepsilon}} \right|
\frac{g(x,t,w_{k,\varepsilon}/\rho)}{g(x_{0},t_{0},w_{k,\varepsilon}/\rho)}\,
\frac{t-t_{0}+\theta}{\theta}\,\zeta^{\,c_{2}}\,dx
\\
&\geqslant \gamma^{-1}\varepsilon_{1}^{1-p}\int\limits_{B_{\rho}(x_{0})\times\{t\}}
\ln \frac{(1+\varepsilon)\varepsilon^{j}\omega\lambda(\rho)}
{(1+\varepsilon)\varepsilon^{j}\omega\lambda(\rho)-(v_{-}-\varepsilon^{j}\omega\lambda(\rho))_{-}}\,
\frac{t-t_{0}+\theta}{\theta}\,\zeta^{\,c_{2}}\,dx.
\end{aligned}
\end{equation}

Collecting estimates \eqref{eq5.3}--\eqref{eq5.5} and choosing $\varepsilon_{1}$ from the
condition $\gamma^{-1}\varepsilon_{1}^{1-p}=2\gamma$, we rewrite \eqref{eq2.5} in the
following form:
\begin{equation}\label{eq5.6}
\begin{aligned}
&D^{-}\int\limits_{B_{\rho}(x_{0})\times\{t\}}
\Phi_{\varepsilon^{j}\omega\lambda(\rho)}(x_{0},t_{0},v_{-})\,
\frac{t-t_{0}+\theta}{\theta}\,\zeta^{\,c_{2}}\,dx
\\
&+\gamma^{-1}\int\limits_{B_{\rho}(x_{0})\times\{t\}}
\ln \frac{(1+\varepsilon)\varepsilon^{j}\omega\lambda(\rho)}
{(1+\varepsilon)\varepsilon^{j}\omega\lambda(\rho)-(v_{-}-\varepsilon^{j}\omega\lambda(\rho))_{-}}\,
\frac{t-t_{0}+\theta}{\theta}\,\zeta^{\,c_{2}}\,dx
\\
&\leqslant \gamma |B_{\rho}(x_{0})|
\quad \quad \text{for all} \ t\in(t_{0}-\theta/2, t_{0}).
\end{aligned}
\end{equation}

\textsl{Claim}. Set $f(x,t,{\rm w}):=\dfrac{{\rm w}}{\mathcal{G}(x,t,{\rm w})}$, ${\rm w}>0$.
The following inequalities hold:
\begin{equation}\label{eq5.7}
\int\limits_{0}^{\sigma}f\left(x_{0},t_{0},
\frac{(\varepsilon+s)\varepsilon^{j}\omega\,\lambda(\rho)}
{\rho} \right)ds
\geqslant \sigma\int\limits_{0}^{1}
f\left(x_{0},t_{0}, \frac{(\varepsilon+s)\varepsilon^{j}\omega\,\lambda(\rho)}
{\rho} \right)ds, \quad \sigma\in(0,1),
\end{equation}
\begin{equation}\label{eq5.8}
\int\limits_{0}^{\varepsilon}
f\left(x_{0},t_{0}, \frac{(\varepsilon+s)\varepsilon^{j}\omega\,\lambda(\rho)}
{\rho} \right)ds \leqslant
\frac{2q\ln2}{2q\ln2+\ln\frac{1}{2\varepsilon}}
\int\limits_{0}^{1}
f\left(x_{0},t_{0}, \frac{(\varepsilon+s)\varepsilon^{j}\omega\,\lambda(\rho)}
{\rho} \right)ds.
\end{equation}

Indeed, inequality \eqref{eq5.7} is a consequence of the fact that the function
$f(x_{0},t_{0}, {\rm v})$ is non-increasing,
\begin{multline*}
\int\limits_{0}^{\sigma}f\left(x_{0},t_{0},
\frac{(\varepsilon+s)\varepsilon^{j}\omega\,\lambda(\rho)}
{\rho} \right)ds
\\
=\sigma\int\limits_{0}^{1}f\left(x_{0},t_{0},
\frac{(\varepsilon+\sigma s)\varepsilon^{j}\omega\,\lambda(\rho)}
{\rho} \right)ds
\geqslant
\sigma\int\limits_{0}^{1}f\left(x_{0},t_{0},
\frac{(\varepsilon+s)\varepsilon^{j}\omega\,\lambda(\rho)}
{\rho} \right)ds.
\end{multline*}
We note that inequality \eqref{eq5.8} in the case $q\leqslant2$ was proved in
\cite[inequality~7.12]{SkrVoitNA20}.
First we observe that by ${\rm (g_{1})}$, ${\rm (\Psi_{2})}$
\begin{equation}\label{eq5.9}
\begin{aligned}
f(x_{0},t_{0}, \gamma{\rm w})
&=\frac{\gamma{\rm w}}{\mathcal{G}(x_{0},t_{0}, \gamma{\rm w})}
\geqslant \frac{1}{g(x_{0},t_{0}, \gamma{\rm w})}
=\frac{1}{\gamma{\rm w}}\,\frac{1}{\psi(x_{0},t_{0}, \gamma{\rm w})}
\\
&\geqslant \frac{1}{\gamma{\rm w}}\,\frac{1}{\psi(x_{0},t_{0}, {\rm w})}
=\frac{1}{\gamma g(x_{0},t_{0}, {\rm w})} \geqslant
\frac{{\rm w}}{\gamma q\, \mathcal{G}(x_{0},t_{0}, {\rm w})}=
\frac{1}{\gamma q}\,f(x_{0},t_{0}, {\rm w}),
\end{aligned}
\end{equation}
provided that $\gamma\geqslant1$ and ${\rm w}\geqslant (1+b_{0})R_{0}^{-\delta}$.

Fix $\mathcal{J}=\mathcal{J}(\varepsilon)$ by the condition
$2^{-\mathcal{J}-1}<\varepsilon\leqslant 2^{-\mathcal{J}}$, then we have
\begin{multline*}
\int\limits_{0}^{1}f\left(x_{0},t_{0},
\frac{(\varepsilon+s)\varepsilon^{j}\omega\,\lambda(\rho)}
{\rho} \right)ds
\\
\geqslant
\int\limits_{0}^{\varepsilon}f\left(x_{0},t_{0},
\frac{(\varepsilon+s)\varepsilon^{j}\omega\,\lambda(\rho)}
{\rho} \right)ds+
\sum\limits_{i=0}^{\mathcal{J}-1}\int\limits_{2^{i}\varepsilon}^{2^{i+1}\varepsilon}
f\left(x_{0},t_{0},
\frac{(\varepsilon+s)\varepsilon^{j}\omega\,\lambda(\rho)}
{\rho} \right)ds.
\end{multline*}
To estimate the second term on the right-hand side of the last inequality
we use \eqref{eq5.9}, we have
$$
\begin{aligned}
&\int\limits_{2^{i}\varepsilon}^{2^{i+1}\varepsilon}
f\left(x_{0},t_{0},
\frac{(\varepsilon+s)\varepsilon^{j}\omega\,\lambda(\rho)}
{\rho} \right)ds= 2^{i}\int\limits_{0}^{\varepsilon}
f\left(x_{0},t_{0},
\frac{(2^{i}s+(2^{i}+1) \varepsilon)\varepsilon^{j}\omega\,\lambda(\rho)}
{\rho} \right)ds
\\
&\geqslant 2^{i}\int\limits_{0}^{\varepsilon}
f\left(x_{0},t_{0},
\frac{(2^{i}+1)(s+ \varepsilon)\varepsilon^{j}\omega\,\lambda(\rho)}
{\rho} \right)ds \geqslant \frac{2^{i}}{q(2^{i}+1)}
\int\limits_{0}^{\varepsilon}
f\left(x_{0},t_{0},
\frac{(s+ \varepsilon)\varepsilon^{j}\omega\,\lambda(\rho)}
{\rho} \right)ds
\\
&\geqslant \frac{1}{2q}
\int\limits_{0}^{\varepsilon}
f\left(x_{0},t_{0},
\frac{(s+ \varepsilon)\varepsilon^{j}\omega\,\lambda(\rho)}
{\rho} \right)ds, \quad i=0,1,\ldots, \mathcal{J}-1.
\end{aligned}
$$
Collecting last two inequalities, we obtain
$$
\int\limits_{0}^{1}f\left(x_{0},t_{0},
\frac{(\varepsilon+s)\varepsilon^{j}\omega\,\lambda(\rho)}
{\rho} \right)ds\geqslant \left(1+\frac{\mathcal{J}}{2q}\right)
\int\limits_{0}^{\varepsilon}
f\left(x_{0},t_{0},
\frac{(s+ \varepsilon)\varepsilon^{j}\omega\,\lambda(\rho)}
{\rho} \right)ds,
$$
from which the required \eqref{eq5.8} follows.
This proves the claim.

Fix $\overline{t}\in(t_{0}-\theta/2, t_{0})$ such that
$Y_{j+1}(\overline{t})=y_{j+1}$. If
$$
D^{-}\int\limits_{B_{\rho}(x_{0})\times\{\overline{t}\}}
\Phi_{\varepsilon^{j}\omega\,\lambda(\rho)}(x_{0},t_{0}, v_{-})\,
\frac{\overline{t}-t_{0}+\theta}{\theta}\,\zeta^{\,c_{2}}\,dx
\geqslant0,
$$
then inequality \eqref{eq5.6} implies that
\begin{equation}\label{eq5.10}
y_{j+1}\ln\frac{1}{2\varepsilon}\leqslant \gamma.
\end{equation}
For fixed $\nu\in(0,1)$ we choose $\varepsilon$ from the condition
$\varepsilon\leqslant\dfrac{1}{2}\exp\left(-\dfrac{\gamma}{\nu}\right)$, then
\eqref{eq5.10} yields
\begin{equation}\label{eq5.11}
y_{j+1}\leqslant \nu.
\end{equation}

Assume now that
$$
D^{-}\int\limits_{B_{\rho}(x_{0})\times\{\overline{t}\}}
\Phi_{\varepsilon^{j}\omega\,\lambda(\rho)}(x_{0},t_{0}, v_{-})\,
\frac{\overline{t}-t_{0}+\theta}{\theta}\,\zeta^{\,c_{2}}\,dx
<0.
$$
Define
$$
t_{\ast}:=\sup\left\{ t\in\left(t_{0}-\theta/2,\, t_{0}\right):
D^{-}\int\limits_{B_{\rho}(x_{0})\times\{t\}}
\Phi_{\varepsilon^{j}\omega\,\lambda(\rho)}(x_{0},t_{0}, v_{-})\,
\frac{t-t_{0}+\theta}{\theta}\,\zeta^{\,c_{2}}\,dx\geqslant0  \right\},
$$
then we obtain
\begin{equation}\label{eq5.12}
I(\overline{t}):=\int\limits_{B_{\rho}(x_{0})\times\{\overline{t}\}}
\Phi_{\varepsilon^{j}\omega\,\lambda(\rho)}(x_{0},t_{0}, v_{-})\,
\frac{\overline{t}-t_{0}+\theta}{\theta}\,\zeta^{\,c_{2}}\,dx
\leqslant I(t_{\ast}).
\end{equation}
By \eqref{eq5.8} we have
\begin{equation}\label{eq5.13}
\begin{aligned}
I(\overline{t})
&\geqslant
\int\limits_{A_{j+1}(\overline{t})}\frac{\overline{t}-t_{0}+\theta}{\theta}\,\zeta^{\,c_{2}}\,dx
\int\limits_{0}^{\varepsilon^{j}(1-\varepsilon)\omega\lambda(\rho)}
\frac{(1+\varepsilon)\varepsilon^{j}\omega\lambda(\rho)-s}
{\mathcal{G} \left( x_{0},t_{0}, \frac{(1+\varepsilon)\varepsilon^{j}\omega\lambda(\rho)-s}
{\rho}  \right)}\,ds
\\
&=\varepsilon^{j}\omega\lambda(\rho)
\int\limits_{A_{j+1}(\overline{t})}\frac{\overline{t}-t_{0}+\theta}{\theta}\,\zeta^{\,c_{2}}\,dx
\int\limits_{0}^{1-\varepsilon} f\left( x_{0},t_{0},
\frac{1+\varepsilon-s}{\rho}\,\varepsilon^{j}\omega\lambda(\rho)  \right)ds
\\
&=\varepsilon^{j}\omega\lambda(\rho)
\int\limits_{A_{j+1}(\overline{t})}\frac{\overline{t}-t_{0}+\theta}{\theta}\,\zeta^{\,c_{2}}\,dx
\int\limits_{\varepsilon}^{1} f\left( x_{0},t_{0},
\frac{\varepsilon+s}{\rho}\,\varepsilon^{j}\omega\lambda(\rho)  \right)ds
\\
&\geqslant y_{j+1} \left( 1-\frac{2q\ln2}{2q\ln2+\ln\frac{1}{2\varepsilon}} \right)
\varepsilon^{j}\omega\lambda(\rho)\,|B_{\rho}(x_{0})|
\int\limits_{0}^{1} f\left( x_{0},t_{0},
\frac{\varepsilon+s}{\rho}\,\varepsilon^{j}\omega\lambda(\rho)  \right)ds.
\end{aligned}
\end{equation}

Let us estimate the term on the right-hand side of inequality \eqref{eq5.12}.
By Fubini's theorem we conclude that
\begin{multline*}
I(t_{\ast})\leqslant \int\limits_{B_{\rho}(x_{0})\times\{t_{\ast}\}}
\frac{t_{\ast}-t_{0}+\theta}{\theta}\,\zeta^{\,c_{2}}\,dx
\int\limits_{0}^{\varepsilon^{j}\omega\lambda(\rho)}
\frac{(1+\varepsilon)\varepsilon^{j}\omega\lambda(\rho)-s}
{\mathcal{G} \left( x_{0},t_{0}, \frac{(1+\varepsilon)\varepsilon^{j}\omega\lambda(\rho)-s}
{\rho}  \right)}\,
\mathbb{I}_{\{\varepsilon^{j}\omega\lambda(\rho)-v_{-}>\varepsilon^{j}\omega\lambda(\rho)s\}}
\,ds
\\
= \varepsilon^{j}\omega\lambda(\rho)\int\limits_{0}^{1}
f\left(x_{0},t_{0}, \frac{1+\varepsilon-s}{\rho}\,\varepsilon^{j}\omega\lambda(\rho)\right)ds
\int\limits_{B_{\rho}(x_{0})\times\{t_{\ast}\}}
\frac{t_{\ast}-t_{0}+\theta}{\theta}\,
\mathbb{I}_{\{\varepsilon^{j}\omega\lambda(\rho)-v_{-}>\varepsilon^{j}\omega\lambda(\rho)s\}}\,
\zeta^{\,c_{2}}\,dx.
\end{multline*}
Similarly to \eqref{eq5.10} we obtain
$$
\int\limits_{B_{\rho}(x_{0})\times\{t_{\ast}\}}
\frac{t_{\ast}-t_{0}+\theta}{\theta}\,\zeta^{\,c_{2}}\,
\mathbb{I}_{\{\varepsilon^{j}\omega\lambda(\rho)-v_{-}>\varepsilon^{j}\omega\lambda(\rho)\}}\,
dx\leqslant \frac{\gamma}{\ln \frac{1+\varepsilon}{1+\varepsilon-s}}\,
|B_{\rho}(x_{0})|,
$$
for any $s\in(0,1)$. Particularly, if
$(1+\varepsilon)\left( 1-\exp\left( -\dfrac{2\gamma}{\nu}\right)\right)<s<1$,
then
$$
\int\limits_{B_{\rho}(x_{0})\times\{t_{\ast}\}}
\frac{t_{\ast}-t_{0}+\theta}{\theta}\,\zeta^{\,c_{2}}\,
\mathbb{I}_{\{\varepsilon^{j}\omega\lambda(\rho)-v_{-}>\varepsilon^{j}\omega\lambda(\rho)\}}\,
dx\leqslant \frac{\nu}{2}\,|B_{\rho}(x_{0})|.
$$
Choosing $s_{\ast}$ from the condition
$s_{\ast}:=(1+\varepsilon)\left( 1-\exp\left( -\dfrac{4\gamma}{\nu}\right)\right)$,
and assuming that $y_{j+1}\geqslant\nu$, from the previous and \eqref{eq5.7} we obtain
\begin{equation}\label{eq5.14}
\begin{aligned}
I(t_{\ast})
&\leqslant y_{j}\,\varepsilon^{j}\omega\lambda(\rho)\,|B_{\rho}(x_{0})|
\\
&\times\left(
\int\limits_{0}^{s_{\ast}}
f\left(x_{0},t_{0}, \frac{1+\varepsilon-s}{\rho}\,\varepsilon^{j}\omega\lambda(\rho)\right)ds
+
\int\limits_{s_{\ast}}^{1}
f\left(x_{0},t_{0}, \frac{1+\varepsilon-s}{\rho}\,\varepsilon^{j}\omega\lambda(\rho)\right)ds
\right)
\\
&\leqslant y_{j}\,\varepsilon^{j}\omega\lambda(\rho)\,|B_{\rho}(x_{0})|
\\
&\times
\left(
\int\limits_{0}^{1}
f\left(x_{0},t_{0}, \frac{\varepsilon+s}{\rho}\,\varepsilon^{j}\omega\lambda(\rho)\right)ds
-\frac{1}{2}
\int\limits_{0}^{1-s_{\ast}}
f\left(x_{0},t_{0}, \frac{\varepsilon+s}{\rho}\,\varepsilon^{j}\omega\lambda(\rho)\right)ds
\right)
\\
&\leqslant y_{j}\left(1-\frac{1-s_{\ast}}{2} \right)
\varepsilon^{j}\omega\lambda(\rho)\,|B_{\rho}(x_{0})|
\int\limits_{0}^{1}
f\left(x_{0},t_{0}, \frac{\varepsilon+s}{\rho}\,\varepsilon^{j}\omega\lambda(\rho)\right)ds.
\end{aligned}
\end{equation}
Collecting \eqref{eq5.13} and \eqref{eq5.14} we obtain that either
\begin{equation}\label{eq5.15}
y_{j}\leqslant \nu,
\end{equation}
or
\begin{equation}\label{eq5.16}
y_{j+1}\leqslant \left(1-\frac{1-s_{\ast}}{2} \right)
\left(1-\frac{2q\ln2}{2q\ln2+\ln\frac{1}{2\varepsilon}} \right)^{-1}
y_{j}.
\end{equation}
By our choice of $s_{\ast}$ we have for sufficiently small $\varepsilon>0$
\begin{multline*}
\left(1-\frac{1-s_{\ast}}{2} \right)
\left(1-\frac{2q\ln2}{2q\ln2+\ln\frac{1}{2\varepsilon}} \right)^{-1}
\\
\leqslant 1-\frac{1}{2}
\left( \exp\left(-\frac{4\gamma}{\nu}\right)-\varepsilon -
\frac{2q\ln2}{2q\ln2+\ln\frac{1}{2\varepsilon}} \right)
\leqslant 1-\frac{1}{2}
\left( \exp\left(-\frac{4\gamma}{\nu}\right)-
\frac{4q\ln2}{2q\ln2+\ln\frac{1}{2\varepsilon}} \right).
\end{multline*}
Choosing $\varepsilon$ so small that
$\dfrac{4q\ln2}{2q\ln2+\ln\frac{1}{2\varepsilon}}=\dfrac{1}{2}\exp\left(-\dfrac{4\gamma}{\nu}\right)$,
we obtain from \eqref{eq5.16} that
$$
y_{j+1}\leqslant
\left( 1-\frac{1}{4} \exp\left(-\frac{4\gamma}{\nu}\right)\right)
y_{j}, \quad j=0,1,2, \ldots\,.
$$
Iterating this inequality, we obtain
$$
y_{j_{\ast}}\leqslant
\left( 1-\frac{1}{4} \exp\left(-\frac{4\gamma}{\nu}\right)\right)^{j_{\ast}}
y_{0}\leqslant
\left( 1-\frac{1}{4} \exp\left(-\frac{4\gamma}{\nu}\right)\right)^{j_{\ast}}.
$$
Choosing $j_{\ast}$ so large that
$
\left( 1-\dfrac{1}{4} \exp\left(-\dfrac{4\gamma}{\nu}\right)\right)^{j_{\ast}}
\leqslant\nu
$, we arrive at
\begin{equation}\label{eq5.17}
y_{j_{\ast}}\leqslant \nu.
\end{equation}
Using Lemma \ref{lem3.5} in the cylinder $Q_{\rho,\theta'}(x_{0},t_{0})$,
$\theta':=\dfrac{\rho^{2}}{\psi\left(x_{0},t_{0},
\frac{\varepsilon^{j_{\ast}}\omega\lambda(\rho)}{\rho}\right)}$
and assuming that $c_{\ast}\geqslant 4\varepsilon^{-1-j_{\ast}}$,
by \eqref{eq5.1} and \eqref{eq5.17} we obtain that
$$
v_{-}(x,t)\geqslant \frac{1}{4}\,\varepsilon^{j_{\ast}}\omega\lambda(\rho)
\quad \text{for a.a.} \  (x,t)\in Q_{\rho/2,\theta'/2}(x_{0},t_{0}),
$$
which implies that
\begin{equation}\label{eq5.18}
\osc\limits_{Q_{\rho/2,\theta'/2}(x_{0},t_{0})}u
\leqslant
\left(1-\frac{1}{4}\,\varepsilon^{j_{\ast}}\omega\lambda(\rho)\right)\omega.
\end{equation}

For $i=0,1,2,\ldots$ define the sequences $r_{i}:=c^{-i}\rho$, $\omega_{0}:=\omega$,
$$
\omega_{i+1}:=
\max\left\{ \left(1-\frac{1}{4}\,\varepsilon^{j_{\ast}}\lambda(r_{i})\right)\omega_{i},\,
c_{\ast}(1+b_{0})\,\frac{r_{i}^{1-\frac{\delta}{\overline{\delta}}}}{\lambda(r_{i})} \right\},
$$
$$
\theta_{i}:= \frac{r_{i}^{2}}{\psi\left(x_{0},t_{0},
\dfrac{\omega_{i}\lambda(r_{i})}{r_{i}}\right)},
\quad
Q_{i}:=Q_{r_{i}, \theta_{i}}(x_{0},t_{0}),
$$
where $c>1$ will be chosen depending on the known data only.

If inequality \eqref{eq5.1} holds, then comletely similar to that of Section \ref{Sect4}
we have
$$
\omega_{i+1}\geqslant c_{\ast}(1+b_{0})
\,\frac{r_{i+1}^{1-\frac{\delta}{\overline{\delta}}}}{\lambda(r_{i+1})}
\left( \frac{1}{2}\,c^{\delta_{0}-\frac{\delta}{\overline{\delta}}}
\bigg(1-\frac{1}{4}\,\varepsilon^{j_{\ast}}\bigg) \right)^{i+1},
$$
so, if
$c^{\delta_{0}-\frac{\delta}{\overline{\delta}}}
\geqslant 2\left(1-\dfrac{1}{4}\,\varepsilon^{j_{\ast}} \right)^{-1}$,
from the previous we obtain
$$
\omega_{i+1}\geqslant c_{\ast}(1+b_{0})
\,\frac{r_{i+1}^{1-\frac{\delta}{\overline{\delta}}}}{\lambda(r_{i+1})},
\quad i=0,1,2,\ldots\,.
$$
Moreover, by \eqref{eq4.11} we also have $\theta_{i+1}\leqslant \theta_{i}$,
$i=0,1,2,\ldots$\,. Inequality \eqref{eq5.18} implies that
$\osc\limits_{Q_{1}}u\leqslant\omega_{1}$. Repeating the previous procedure,
similarly to \eqref{eq4.12} we obtain for any $i\geqslant1$
\begin{equation}\label{eq5.19}
\osc\limits_{Q_{i}}u\leqslant\omega
\exp\left( -\gamma \int\limits_{r_{i}}^{c\rho}\lambda(s)\,\frac{ds}{s}
\right)+\gamma(1+b_{0})\,\frac{\rho^{1-\delta_{0}}}{\lambda(\rho_{0})}.
\end{equation}

By ${\rm (g_{1})}$ we have the inclusion $\widetilde{Q}_{i}\subset Q_{i}$,
$\widetilde{Q}_{i}:=B_{r_{i}}(x_{0})\times(t_{0}-\widetilde{\theta}_{i}, t_{0})$,
$\widetilde{\theta}_{i}:=\dfrac{r_{i}(c_{\ast}(1+b_{0}))^{2-q}}{g(x_{0},t_{0},1)}$.
Therefore, inequality \eqref{eq5.19} implies
$$
\osc\limits_{\widetilde{Q}_{i}}
\leqslant
2M \exp\left( -\gamma \int\limits_{r_{i}}^{c\rho}\lambda(s)\,\frac{ds}{s}
\right)+\gamma(1+b_{0})\,\frac{\rho^{1-\delta_{0}}}{\lambda(\rho_{0})},
$$
which implies the continuity of $u$ in the ''singular'' case.
To complete the proof of Theorem \ref{th2.1}, note that in the case $q\leqslant2$,
as it was mentioned in Section \ref{Introduction}, by (${\rm g}_{1}$) condition ($\Psi_{2}$)
holds with $\delta=b_{0}=0$; therefore,
the number $R_{0}$ claimed in the definition of the cylinder $Q_{R_{0},R_{0}}(x_{0},t_{0})$
depends only on the distance between $(x_{0},t_{0})$ and $\partial\Omega_{T}$, so, in this case
$u\in C_{{\rm loc}}(\Omega_{T})$.

\vskip3.5mm
{\bf Acknowledgements.} The research of the first author was supported by grants of Ministry of Education and Science of Ukraine
(project numbers are 0118U003138, 0119U100421).

\bigskip

CONTACT INFORMATION

\medskip

\medskip
Igor I.~Skrypnik\\Institute of Applied Mathematics and Mechanics,
National Academy of Sciences of Ukraine, Gen. Batiouk Str. 19, 84116 Sloviansk, Ukraine\\
Vasyl' Stus Donetsk National University,
600-richcha Str. 21, 21021 Vinnytsia, Ukraine\\iskrypnik@iamm.donbass.com

\medskip
Mykhailo V.~Voitovych\\Institute of Applied Mathematics and Mechanics,
National Academy of Sciences of Ukraine, Gen. Batiouk Str. 19, 84116 Sloviansk, Ukraine\\voitovichmv76@gmail.com

\end{document}